\newfont{\ffi}{cmfi10 scaled 1000}
\newcommand{\mc}[1]{{\mathcal{#1}}}
\newcommand{\mf}[1]{{\mathfrak{#1}}}
\newcommand{\bb}[1]{{\mathbb{#1}}}
\newcommand\appendixsection{\@startsection {section}{1}{\z@}
	{-3.5ex \@plus -1ex \@minus -.2ex}{2.3ex \@plus.2ex}
	{\normalfont\Large\bfseries\hspace*{-15pt}Appendix\ }}
\DeclareMathOperator{\dom}{dom}
\DeclareMathOperator{\sign}{sign}
\DeclareMathOperator{\spn}{span}
\DeclareMathOperator{\supp}{supp}
\DeclareMathOperator{\clos}{clos}
\DeclareMathOperator{\Sub}{Sub}
\DeclareMathOperator{\Adm}{Adm}
\newcommand{\PW}{{\mc P\hspace*{-1pt}W\!}}
\DeclareMathOperator{\RE}{Re}
\DeclareMathOperator{\IM}{Im}
\renewcommand{\Re}{\RE}
\renewcommand{\Im}{\IM}
\newcommand{\qu}{\overline}
\newcommand{\HB}{\mc H\!B}
\DeclareMathOperator{\Assoc}{Assoc}
\DeclareMathOperator{\mt}{mt}
\newcommand{\comment}[1]{}
\newlength{\maxlabwidth}
\newenvironment{axioms}[1]{
    \setlength{\maxlabwidth}{#1}
    \begin{list}{}{
    \setlength{\rightmargin}{2mm}
    \setlength{\leftmargin}{\maxlabwidth}\addtolength{\leftmargin}{2mm}
    \setlength{\labelsep}{0mm}
    \setlength{\labelwidth}{\maxlabwidth}
    \setlength{\itemindent}{0mm}
    
    }
    }{
    \end{list}
    }
\numberwithin{equation}{section}
\theoremstyle{plain}
	\newtheorem{lem}{Lemma}[section]
	\newtheorem{pro}[lem]{Proposition}
	\newtheorem{thm}[lem]{Theorem}
	\newtheorem{cor}[lem]{Corollary}
	\newtheorem{namth}[lem]{}
\theoremstyle{definition}
	\newtheorem{defi}[lem]{Definition}
\theoremstyle{remark}
	\newtheorem{rem}[lem]{Remark}
	\newtheorem{exa}[lem]{Example}
	\newtheorem{namre}[lem]{}
\renewcommand{\qedsymbol}{\raisebox{-2pt}{\large\ding{113}}}
\newcommand{\defendsymbol}{}
\newcommand{\qedsymbolsave}{\qedsymbol}
\newenvironment{lemma}[2][]{\begin{lem}[#1]\label{LE#2}}{\end{lem}}
\newcommand{\leref}[1]{Lemma \ref{LE#1}}
\newenvironment{proposition}[2][]{\begin{pro}[#1]\label{PR#2}}{\end{pro}}
\newcommand{\prref}[1]{Proposition \ref{PR#1}}
\newenvironment{theorem}[2][]{\begin{thm}[#1]\label{TH#2}}{\end{thm}}
\newcommand{\thref}[1]{Theorem \ref{TH#1}}
\newenvironment{corollary}[2][]{\begin{cor}[#1]\label{CO#2}}{\end{cor}}
\newcommand{\coref}[1]{Corollary \ref{CO#1}}
\newenvironment{ntheorem}[2][]{\begin{namth}\textbf{\!{#1:}\!}\label{NT#2}}{\end{namth}}
\newenvironment{definition}[2][]{\begin{defi}[#1]\label{DE#2}}{
	\renewcommand{\qedsymbolsave}{\qedsymbol}\renewcommand{\qedsymbol}{\defendsymbol}
	\popQED{\qed}\renewcommand{\qedsymbol}{\qedsymbolsave}\end{defi}}
\newcommand{\deref}[1]{Definition \ref{DE#1}}
\newenvironment{remark}[2][]{\begin{rem}[#1]\label{RE#2}}{
	\renewcommand{\qedsymbolsave}{\qedsymbol}\renewcommand{\qedsymbol}{\defendsymbol}
	\popQED{\qed}\renewcommand{\qedsymbol}{\qedsymbolsave}\end{rem}}
\newcommand{\reref}[1]{Remark \ref{RE#1}}
\newenvironment{example}[2][]{\begin{exa}[#1]\label{EX#2}}{
	\renewcommand{\qedsymbolsave}{\qedsymbol}\renewcommand{\qedsymbol}{\defendsymbol}
	\popQED{\qed}\renewcommand{\qedsymbol}{\qedsymbolsave}\end{exa}}
\newcommand{\exref}[1]{Example \ref{EX#1}}
\newenvironment{proofof}[1]{\begin{proof}[\textit{Proof (of #1)}]}{\end{proof}}
\newcommand{\bibi}[5]{\bibitem[#5]{#1} \textsc{#2}:\ \textit{#3,}\ {#4.}}
\begin{document}

{\Large\bf
\begin{flushleft}
	Majorization in de~Branges spaces I. Representability of subspaces
\end{flushleft}
}
\vspace*{3mm}
\begin{center}
	{\sc Anton Baranov, Harald Woracek}
\end{center}


\begin{abstract}
	\noindent
	In this series of papers we study subspaces of de~Branges spaces of entire functions 
	which are generated by majorization on subsets $D$ of the closed upper half-plane. The present, 
	first, part is addressed to the question which subspaces of a given de~Branges space can be represented 
	by means of majorization. Results depend on the set $D$ where majorization is permitted. 
	Significantly different situations are encountered when $D$ is close to the real axis or 
	accumulates to $i\infty$. 
\end{abstract}
\begin{flushleft}
	{\small
	{\bf AMS Classification Numbers:} 46E20, 30D15, 46E22 \\
	{\bf Keywords:} de~Branges subspace, majorant, Beurling-Malliavin Theorem
	}
\end{flushleft}


%
%
\section{Introduction}

In the paper \cite{debranges:1959} L. de~Branges initiated the study of Hilbert spaces of entire 
functions, which satisfy specific additional axioms. These spaces can be viewed as a generalization of the 
classical Paley--Wiener spaces $\PW_a$, which consist of all entire functions of exponential type at most $a$ whose 
restriction to the real line is square-integrable. The theory of de~Branges spaces can be viewed as a 
generalization of classical Fourier analysis. For example, their structure theory gives rise 
to generalizations of the Paley--Wiener Theorem, which identifies $\PW_a$ as the Fourier image of all 
square-integrable functions supported in the interval $[-a,a]$. De~Branges spaces also appear in many other 
areas of analysis, like the theory of Volterra operators and entire operators in the sense of M.G. Kre\u{\i}n, 
the shift operator in the Hardy space, V.P. Potapov's $J$-theory, 
the spectral theory of Schr\"odinger operators, Stieltjes or Hamburger power moment problems, or prediction theory of 
Gaussian processes, cf.\ \cite{gohberg.krein:1970}, \cite{gorbachuk.gorbachuk:1997}, \cite{nikolski:1986}, 
\cite{golinskii.mikhailova:1997}, \cite{remling:2002}, \cite{dym.mckean:1976}. 

The present paper is the first part of a series, in which we investigate the aspect of 
majorization in de~Branges spaces. Such considerations have a 
long history in complex analysis, going back to the Beurling--Malliavin Multiplier Theorem, cf.\ \cite{beurling.malliavin:1962}. 
In recent investigations by V. Havin and J. Mashreghi, results of this kind were proven 
in the more general setting of shift-coinvariant 
subspaces of the Hardy space, cf.\ \cite{havin.mashreghi:2003a}, \cite{havin.mashreghi:2003b}. 
All these considerations, as well as our previous work \cite{baranov.woracek:dbmaj}, deal with majorization along the 
real line. 

Having these concepts in mind, a general notion of majorization in de~Branges spaces evolves: 

\begin{definition}{A1}
	Let $\mc H$ be a de~Branges space, and let $\mf m:D\to[0,\infty)$ where 
	$D\subseteq\bb C^+\cup\bb R$. Set 
	\[
		R_{\mf m}(\mc H):= \big\{\,F\in\mc H:\ \exists\,C>0:|F(z)|,|F^\#(z)|\leq C\mf m(z),
		\ z\in D\,\big\}
		\,,
	\]
	and define
	\[
		\mc R_{\mf m}(\mc H):= \clos_{\mc H} R_{\mf m}(\mc H)
		\,.
	\]
\end{definition}

\noindent
It turns out that, provided $\mc R_{\mf m}(\mc H)\neq\{0\}$ and $\mf m$ satisfies a mild regularity condition, the space 
$\mc R_{\mf m}(\mc H)$ is a de~Branges subspace of $\mc H$, i.e.\ is itself a de~Branges space when endowed with the 
inner product inherited from $\mc H$. 

The following questions related to this concept come up naturally. 

\hspace*{0pt}\\[-2mm]* {\it Which de~Branges subspaces $\mc L$ of a given de~Branges space $\mc H$ can be realized as 
	$\mc L=\mc R_{\mf m}(\mc H)$ with some majorant $\mf m$ ?}

\hspace*{0pt}\\[-2mm]* {\it If $\mc L$ is of the form $\mc R_{\mf m}(\mc H)$ with some $\mf m$, 
	how big or how small can $\mf m$ be chosen such that still $\mc L=\mc R_{\mf m}(\mc H)$ ?} 
\hspace*{0pt}\\[-2mm] 

Let us point out the two aspects of the second question. If $\mc L=\mc R_{\mf m}(\mc H)$, we have 
available a dense linear subspace of $\mc L$ which consists of functions with limited growth on the domain $D$ of $\mf m$, namely 
$R_{\mf m}(\mc H)$. This knowledge becomes stronger, the smaller $\mf m$ is. 
On the other hand, the equality $\mc L=\mc R_{\mf m}(\mc H)$ 
also says that an element of $\mc H$ already belongs to $\mc L$ if it is majorized by $\mf m$. This knowledge becomes 
stronger, the bigger $\mf m$ is. 

Answers to these questions will, of course, depend on the set $D$ where majorization is permitted. Up to now, only majorization 
along $\bb R$ has been considered. For this case, the first question has been answered completely in 
\cite{baranov.woracek:dbmaj}. The "how small"--part of the second question is related to the deep investigations in 
\cite{havin.mashreghi:2003a}, \cite{havin.mashreghi:2003b}. 

In this paper we give some answers to the first question, and to the "how big"--part of the second question. As domains $D$ 
of majorization we consider, among others, rays contained in the closed upper half-plane, lines parallel to the real 
axis contained in the closed upper half-plane, or combinations of such types of sets. For example, it turns out that 
each de~Branges subspace $\mc L$ of any given de~Branges space $\mc H$ can be realized as $\mc R_{\mf m}(\mc H)$, when 
majorization is allowed on $\bb R\cup i[0,\infty)$. Even more, one can choose for $\mf m$ a majorant which is naturally associated 
to $\mc L$, does not depend on the external space $\mc H$, is quite big, and actually gives $\mc L=R_{\mf m}(\mc H)$. 
It is an interesting and, on first sight, maybe surprising consequence 
of de~Branges' theory, that the main strength of majorization 
is contributed by boundedness along the imaginary half-line, and not 
along $\bb R$. In fact, if we permit majorization only on some 
ray $i[h,\infty)$ where $h>0$, then all de~Branges subspaces subject 
to an obvious necessary condition can be realized in the way stated above. Similar phenomena, where growth restrictions 
on the imaginary half-axis imply a certain behaviour along the real line, have already been experienced in the 
classical theory, see e.g.\ \cite[Theorem 2]{baranov:2001} or \cite[Theorem 26]{debranges:1968}. 

Let us close this introduction with an outline of the organization of this paper. 
In order to make the presentation as self-contained as possible, 
we start in Section 2 with recalling some basic definitions and collecting some results which are essential for 
what follows, among them, the definition of de~Branges spaces of entire functions, their relation to 
entire functions of Hermite--Biehler class, and the structure of de~Branges subspaces. 
In Section 3, we make precise under which conditions on $\mf m$ the space $\mc R_{\mf m}(\mc H)$ becomes a de~Branges 
subspace of $\mc H$, and discuss some examples of majorants. Sections 4 and 5 contain the main results of this paper. First 
we deal with representation of de~Branges subspaces by majorization along rays not parallel to the real axis. 
Then we turn to spaces $\mc R_{\mf m}(\mc H)$ obtained when majorization is required on a set close to the real axis, for 
example a line parallel to $\bb R$. The paper closes with two appendices. In the first appendix we prove 
an auxiliary result on model subspaces generated by inner functions, which is employed in Section 5. We decided to move this 
theorem out of the main text, since it is interesting on its own right and independent of the presentation 
concerning de~Branges spaces. In the second appendix, we are summing up the representation theorems 
for de~Branges subspaces obtained in Sections 4 and 5 in tabularic form. 
\medskip

\noindent
{\bf Acknowledgements.} We would like to thank Alexei Poltoratski who 
suggested the use of weak type estimates for the proof of \thref{A60}. 
The first author was partially supported by the grants
MK-5027.2008.1 and NSH-2409.2008.1.

\section{Preliminaries}
%
%

%
%
\begin{flushleft}
	\textbf{I. Mean type and zero divisors}
\end{flushleft}
\vspace*{-2mm}
We will use the standard theory of Hardy spaces in the half-plane as presented e.g.\ 
in \cite{garnett:1981} or \cite{rosenblum.rovnyak:1994}. In this place, let us only recall the following notations. 
We denote by 
\begin{enumerate}[$(i)$]
\item $\mc N=\mc N(\bb C^+)$ the set of all functions of \emph{bounded type}, that is, 
	of all functions $f$ analytic in $\bb C^+$, which can be represented as a quotient $f=g^{-1}h$ of two 
	bounded and analytic functions $g$ and $h$. 
\item $\mc N_+=\mc N_+(\bb C^+)$ the \emph{Smirnov class}, that is, 
	the set of all functions $f$ analytic 
	in $\bb C^+$, which can be represented as $f=g^{-1}h$ with two bounded and analytic functions $g$ and $h$ where 
	in addition $g$ is \emph{outer}. 
\item $H^2=H^2(\bb C^+)$ the \emph{Hardy space}, that is, the set 
	of all functions $f$ analytic in $\bb C^+$ 
	which satisfy 
	\[
		\sup_{y>0}\int_{\bb R}|f(x+iy)|^2\,dx<\infty
		\,.
	\]
\end{enumerate}
If $f\in\mc N$, the \emph{mean type of $f$} is defined by the formula 
\[
	\mt f:= \limsup_{y\to+\infty}\frac 1y\log|f(iy)|
	\,.
\]
Then $\mt f\in\bb R$, and the radial growth of $f$ is determined by the number $\mt f$ in the 
following sense: For every $a\in\bb R$ and $0<\alpha<\beta<\pi$, 
there exists an open set $\Delta_{a,\alpha,\beta}\subseteq(0,\infty)$ with finite logarithmic length, such that 
\begin{equation}\label{A6}
	\lim_{\substack{r\to\infty\\ r\not\in\Delta_{a,\alpha,\beta}}} \frac 1r \log\big|f(a+re^{i\theta})\big|=
	\mt f\cdot\sin\theta
	\,,
\end{equation}
uniformly for $\theta\in[\alpha,\beta]$. If, for some $\epsilon>0$, the angle $[\alpha-\epsilon,\beta+\epsilon]$ 
does not contain any zeros of $f(a+z)$, then one can choose $\Delta_{a,\alpha,\beta}=\emptyset$. 

Here we understand by the logarithmic length of a subset $M$ of $\bb R^+$ the value of the
integral $\int_M x^{-1}\,dx$. When speaking about logarithmic length of a set $M$, we always
include that $M$ should be measurable.

\begin{definition}{A7}
	Let $\mf m:D\to\bb C$ be a function defined on some subset $D$ of the complex plane. 
	\begin{enumerate}[$(i)$]
	\item By analogy with \eqref{A6} we define the \emph{mean type of $\mf m$} as 
		\[
			\mt_{\mc H}\mf m:= \inf\Big\{\frac 1{\sin\theta} 
			\limsup_{\substack{r\to\infty\\ r\in M}}
			\frac 1r\log |\mf m(a+re^{i\theta})| \Big\}\in[-\infty,+\infty]
			\,,
		\]
		where the infimum is taken over those values 
		$a\in\bb R$, $\theta\in(0,\pi)$, and those sets $M\subseteq\bb R^+$ of infinite logarithmic length, 
		for which $\{a+re^{i\theta}:\,r\in M\}\subseteq D$. Thereby we understand the infimum of the empty set 
		as $+\infty$. 
	\item We associate to $\mf m$ its \emph{zero divisor} $\mf d_{\mf m}:\bb C\to\bb N_0\cup\{\infty\}$. If $w\in\bb C$, 
		then $\mf d_{\mf m}(w)$ is defined as the infimum of all numbers $n\in\bb N_0$, such that there 
		exists a neighbourhood $U$ of $w$ with the property 
		\[
			\inf_{\substack{z\in U\cap D\\ |z-w|^n\neq 0}}\frac{|\mf m(z)|}{|z-w|^n}>0
			\,.
		\]
	\end{enumerate}
\end{definition}

\noindent
Note that in general $\mt\mf m$ may take the values $\pm\infty$. However, the above definition ensures that $\mt\mf m$ 
coincides with the classical notion in case $\mf m\in\mc N$. 

A similar remark applies to $\mf d_{\mf m}$. If $D$ is open, and $\mf m$ is analytic, then $\mf d_{\mf m}|_D$ is just 
the usual zero divisor of $\mf m$, i.e.\ $\mf d_{\mf m}(w)$ is the multiplicity of the point $w$ as a zero of $\mf m$ whenever 
$w\in D$. Moreover, note that the definition of $\mf d_{\mf m}$ is made in such a way 
that $\mf d_{\mf m}(w)=0$ whenever $w\not\in\qu D$.

%
%
\begin{flushleft}
	\textbf{II. Axiomatics of de~Branges spaces of entire functions}
\end{flushleft}
\vspace*{-2mm}
Our standard reference concerning the theory of de~Branges spaces of entire functions is \cite{debranges:1968}. In this and 
the following two subsections we will recall some basic facts about de~Branges spaces. Our aim is not only 
to set up the necessary notation, but also to put emphasis on those results which are significant in the context of the 
present paper. 

We start with the axiomatic definition of a de~Branges space. 

\begin{definition}{A14}
	A \emph{de~Branges space} is a Hilbert space $\langle\mc H,(\cdot,\cdot)\rangle$, $\mc H\neq\{0\}$, with
	the following properties:
	\begin{axioms}{14mm}
	\item[dB1] The elements of $\mc H$ are entire functions, and
		for each $w\in\bb C$ the point evaluation $F\mapsto F(w)$
		is a continuous linear functional on $\mc H$. 
	\item[dB2] If $F\in\mc H$, also $F^\#(z):= \overline{F(\bar z)}$
		belongs to $\mc H$ and $\|F^\#\|=\|F\|$.
	\item[dB3] If $w\in\bb C\setminus\bb R$ and $F\in\mc H$,
		$F(w)=0$, then
		\[
			\frac{z-\bar w}{z-w}F(z)\in\mc H\quad \text{ and }\quad \Big\|\frac{z-\bar w}{z-w}F(z)\Big\|
			=\big\|F\big\|
			\,.
		\]
	\end{axioms}
\end{definition}

By (dB1) a de~Branges space $\mc H$ is a reproducing kernel Hilbert space. We will denote 
the kernel corresponding to $w\in\bb C$ by $K(w,\cdot)$ or, if it is necessary to be more specific, by $K_{\mc H}(w,\cdot)$. 
A particular role is played by the norm of reproducing kernel functions. We will denote 
\[
	\nabla_{\!\mc H}(z):= \|K(z,\cdot)\|_{\mc H}, \qquad z\in\bb C
	\,. 
\]
This norm can be computed e.g.\ as 
\[
	\nabla_{\!\mc H}(z)=\sup\big\{|F(z)|:\,\|F\|_{\mc H}=1\big\}=\big(K(z,z)\big)^{1/2}
	\,. 
\]
Let us explicitly point out that every element of $\mc H$ is majorized by $\nabla_{\!\mc H}$: By the 
Schwarz inequality we have 
\begin{equation}\label{A21}
	|F(z)|\leq\|F\|\nabla_{\!\mc H}(z),\qquad z\in\bb C, \ \  F\in\mc H
	\,. 
\end{equation}

\begin{remark}{A29}
	Let $\mc H$ be a de~Branges space. For a subset $L\subseteq\mc H$ we define $\mf d_L:\bb C\to\bb N_0$ 
	as 
	\[
		\mf d_L (w):= \min_{F\in L}\mf d_F(w)
		\,.
	\]
	Due to the axiom (dB3), we have $\mf d_{\mc H}(w)=0$, $w\in\bb C\setminus\bb R$. 
	In fact, if $F\in\mc H$ and $w$ is a nonreal zero of $F$, then $(z-w)^{-1}F(z)\in\mc H$. 
	This need not be true for real points $w$. However, one can show that, if $w\in\bb R$ and 
	$\mf d_F(w)>\mf d_{\mc H}(w)$, then $(z-w)^{-1}F(z)\in\mc H$. 
\end{remark}

\begin{remark}{A30}
	Let $\mc H$ be a de Branges space, and let $\mf m:D\to\bb C$ be a function defined on 
	some subset $D$ of the complex plane. We define the \emph{mean type of $\mf m$ relative to $\mc H$} by 
	\[
		\mt_{\mc H}\mf m:=  \mt \frac{\mf m}{\nabla_{\!\mc H}}.
	\]
	If $L$ is a subset of $\mc H$, the \emph{mean type of $L$ relative to $\mc H$} is 
	\[
		\mt_{\mc H}L:= \sup_{F\in L}\mt_{\mc H} F
		\,.
	\]
	Note that, by \eqref{A21}, we have $\mt_{\mc H}L\leq 0$. 
	
	For each $\alpha\leq 0$ the set $\{F\in\mc H:\,\mt_{\mc H} F \leq\alpha\}$ is closed, cf.\ 
	\cite{kaltenbaeck.woracek:growth}. This implies that always $\mt_{\mc H}\clos_{\mc H}L=\mt_{\mc H}L$. 
\end{remark}

\begin{remark}{A31}
	For a de~Branges space $\mc H$ let $S_{\mc H}$ denote the operator of multiplication by the independent 
	variable. That is, 
	\[
		(S_{\mc H}F)(z):= zF(z),\qquad \dom S_{\mc H}:= \big\{F\in\mc H:\,zF(z)\in\mc H\big\}
		\,.
	\]
	The relationship between de~Branges spaces and entire operators in the sense of M.G. Kre\u{\i}n 
	is based on the fact that $S_{\mc H}$ is a closed symmetric operator with defect index $(1,1)$ for which 
	every complex number is a point of regular type. 
\end{remark}

\begin{remark}{A24}
	Taking up the operator theoretic viewpoint, the role played by \emph{functions associated to 
	$\mc H$} can be explained neatly. For a de~Branges space $\mc H$, the set of functions associated to $\mc H$ 
	can be defined as 
	\[
		\Assoc\mc H:= \big\{G_1(z)+zG_2(z):\,G_1,G_2\in\mc H\big\}
		\,. 
	\]
	Clearly, $\Assoc\mc H$ is a linear space which contains $\mc H$. 
	
	The space $\Assoc\mc H$ can be used to describe the extensions of $S_{\mc H}$ by means 
	of difference quotients. We have 
	\begin{multline*}
		F\!\in\!\Assoc \mc H \ \Longleftrightarrow \ 
		\forall\,G\!\in\!\mc H,\ w\!\in\!\bb C:\ \frac{F(z)G(w)-F(w)G(z)}{z-w}\!\in\!\mc H. 
	\end{multline*}
	Moreover, for each $F\in\Assoc\mc H$ and $w\in\bb C$, $F(w)\neq 0$, the difference quotient operator 
	\[
		\rho_{F,w}:\ G \mapsto \frac{G(z)-\frac{G(w)}{F(w)}F(z)}{z-w}
	\]
	is a bounded linear operator of $\mc H$ into itself, actually, the resolvent of some extension of $S_{\mc H}$. 
	Let us note that, if $F$ is not only associated to $\mc H$ but belongs to $\mc H$, 
	we have $\rho_{F,w}\mc H=\dom S_{\mc H}$, $F(w)\neq 0$. 
\end{remark}

%
%
\begin{flushleft}
	\textbf{III. De~Branges spaces and Hermite--Biehler functions}
\end{flushleft}
\vspace*{-2mm}
It is a basic fact that a de~Branges space $\mc H$ is completely determined by a single entire function. 

\begin{definition}{A22}
	We say that an entire function $E$ belongs to the \emph{Hermite--Biehler class $\HB$}, if
	\[
		|E^\#(z)|<|E(z)|,\qquad z\in\bb C^+
		\,.
	\]
	If $E\in\HB$, define
	\[
		\mc H(E):= \Big\{F\text{ entire}:\,\frac FE,\frac{F^\#}E\in H^2(\bb C^+)\Big\}
		\,,
	\]
	and 
	\[
		(F,G)_E:= \int_{\bb R}\frac{F(t)\qu{G(t)}}{|E(t)|^2}\,dt,\qquad
		F\in\mc H(E)
		\,.
	\]
\end{definition}

Instead of $E^{-1}F,E^{-1}F^\#\in H^2$ one could, equivalently, require that 
$E^{-1}F$ and $E^{-1}F^\#$ are of bounded type 
and nonpositive mean type in the upper half-plane, and that $\int_{\bb R}|E^{-1}(t)F(t)|^2\,dt<\infty$. 
This is, in fact, the original definition in \cite{debranges:1959}. 

The relation between de~Branges spaces and Hermite-Biehler functions is established by the following fact: 

\begin{ntheorem}[De~Branges spaces via $\HB$]{A28}
	For every function $E\in\HB$, the space $\langle\mc H(E),(\cdot,\cdot)_E\rangle$ is a de~Branges space, 
	and conversely every de~Branges space can be obtained in this way.
\end{ntheorem}
	
The function $E\in\HB$ which realizes a given de~Branges space $\langle\mc H,(\cdot,\cdot)\rangle$ as 
$\langle\mc H(E),(\cdot,\cdot)_E\rangle$ is not unique. 
However, if $E_1,E_2\in\HB$ and $\langle\mc H(E_1),(\cdot,\cdot)_{E_1}\rangle=\langle\mc H(E_2),(\cdot,\cdot)_{E_2}\rangle$, 
then there exists a constant $2\times 2$-matrix $M$ with real entries and determinant $1$, such that 
\[
	(A_2,B_2)=(A_1,B_1)M
	\,.
\]
Here, and later on, we use the generic decomposition of a function $E\in\HB$ as $E=A-iB$ with 
\begin{equation}\label{A16}
	A:= \frac{E+E^\#}2,\qquad B:= i\frac{E-E^\#}2
	\,.
\end{equation}
For each two function $E_1,E_2\in\HB$ with $\langle\mc H(E_1),(\cdot,\cdot)_{E_1}\rangle=
\langle\mc H(E_2),(\cdot,\cdot)_{E_2}\rangle$, there exist constants $c,C>0$ such that 
\[
	c|E_1(z)|\leq|E_2(z)|\leq C|E_1(z)|,\qquad z\in\bb C^+\cup\bb R
	\,.
\]
The notion of a phase function is important 
in the theory of de~Branges spaces. For $E\in\HB$, 
a \emph{phase function} of $E$ is a continuous, increasing function 
$\varphi_E:\bb R\to\bb R$ with $E(t)\exp (i\varphi_E(t))\in\bb R$, 
$t\in\bb R$. A phase function $\varphi_E$ is 
by this requirement defined uniquely up to an additive constant which 
belongs to $\pi\bb Z$. Its derivative is continuous, positive, and can be computed as 
\begin{equation}\label{A33}
	\varphi'(t)=\pi\frac{K(t,t)}{|E(t)|^2}=a+\sum_n\frac{|\Im z_n|}{|t-z_n|^2}
	\,,
\end{equation}
where $z_n$ are zeros of $E$ listed according to their multiplicities, and $a:= -\mt(E^{-1}E^\#)$. 

\begin{remark}{A25}
	Let $\langle\mc H,(\cdot,\cdot)\rangle$ be a de~Branges space, and let $E\in\HB$ be such that 
	$\langle\mc H,(\cdot,\cdot)\rangle=\langle\mc H(E),(\cdot,\cdot)_E\rangle$. 
	Then all information about $\mc H$ can, theoretically, be 
	extracted from $E$. In general this is a difficult task, however, for some items it can be done explicitly. For example: 
	\begin{enumerate}[$(i)$]
	\item The reproducing kernel $K(w,\cdot)$ of $\mc H$ is given as 
			\[
				K(w,z)=\frac{E(z)E^\#(\bar w)-E(\bar w)E^\#(z)}{2\pi i(\bar w-z)}
				\,.
			\]
			In particular, this implies that $E\in\Assoc\mc H$. 
	\item We have $\mf d_{\mc H}=\mf d_E$. This equality even holds if we only assume that 
		$\mc H=\mc H(E)$ as sets, i.e.,	without assuming equality of norms. 
	\item The function $\nabla_{\!\mc H}$ is given as 
		\begin{equation}\label{A23}
			\nabla_{\!\mc H}(z)=
			\begin{cases}
				\Big(\frac{|E(z)|^2-|E(\qu z)|^2}{4\pi\Im z}\Big)^{1/2}, &\hspace*{-2mm}\quad z\in\bb C\setminus\bb R,\\
				\pi^{-1/2}|E(z)|(\varphi_E'(z))^{1/2}, &\hspace*{-2mm}\quad z\in\bb R.
			\end{cases}
		\end{equation}
		In particular, we have $\mf d_{\nabla_{\!\mc H}}=\mf d_{\mc H}$. 
	\item We have 
		\[
			\mt_{\mc H} F=\mt\frac FE,\qquad F\in\mc H
			\,.
		\]
		This follows from the estimates (with $w_0\in\bb C^+$ fixed)
		\begin{equation}\label{A42}
			\frac{|E(w_0)|(1-|\frac{E(\qu{w_0})}{E(w_0)}|)}{2\pi\nabla_{\!\mc H}(w_0)}\frac 1{|z-\qu w_0|}\leq
			\frac{\nabla_{\!\mc H}(z)}{|E(z)|}\leq \frac 1{2\sqrt\pi}\frac 1{\sqrt{\Im z}},\quad z\in\bb C^+
			\,,
		\end{equation}
		which are deduced from the inequality 
                $|K(w_0,z)|=\big|\big( K(w_0,\cdot),K(z,\cdot) \big)\big|
                \leq\nabla_{\!\mc H}(w_0)\nabla_{\!\mc H}(z)$ and \eqref{A23}. 
	\end{enumerate}
\end{remark}

%
%
\begin{flushleft}
	\textbf{IV. Structure of dB-subspaces}
\end{flushleft}
\vspace*{-2mm}
The, probably, most important notion in the theory of de~Branges spaces is the one of de~Branges subspaces. 

\begin{definition}{A43}
	A subset $\mc L$ of a de~Branges space $\mc H$ is called a \emph{dB-subspace} of $\mc H$,
	if it is itself, with the norm inherited from $\mc H$, 
	a de~Branges space. 
	
	We will denote the set of all dB-subspaces of a given space $\mc H$ by $\Sub\mc H$.
	If $\mf d:\bb C\to\bb N_0$, we set 
	\[
		\Sub_{\mf d}\mc H:= \big\{\mc L\in\Sub\mc H:\,\mf d_{\mc L}=\mf d \big\}
		\,.
	\]
\end{definition}

\noindent
Since dB-subspaces with $\mf d_{\mc L}=\mf d_{\mc H}$ appear quite frequently, we introduce the shorthand notation 
$\Sub^*\mc H:= \Sub_{\mf d_{\mc H}}\mc H$. 

It is apparent from the axioms (dB1)--(dB3) of \deref{A14} that a subset 
$\mc L$ of $\mc H$ is a dB-subspace if and only if the following three conditions hold:
\begin{enumerate}[$(i)$]
\item $\mc L$ is a closed linear subspace of $\mc H$; 
\item If $F\in\mc L$, then also $F^\#\in\mc L$; 
\item If $F\in\mc L$ and $z_0\in\bb C\setminus\bb R$ is such that $F(z_0)=0$, then $\frac{F(z)}{z-z_0}\in\mc L$. 
\end{enumerate}

\begin{example}{A26}
	Some examples of dB-subspaces can be obtained by imposing conditions on real zeros or on mean type. 
	
	If $\mf d:\bb C\to\bb N_0$, $\supp\mf d\subseteq\bb R$, 
	is a function such that $\mf d_{F_0}\geq\mf d$ for some $F_0\in\mc H\setminus\{0\}$, then 
	\[
		\mc H_{\mf d}:= \big\{F\in\mc H:\,\mf d_F\geq\mf d\big\}\in\Sub\mc H
		\,.
	\]
	We have $\mf d_{\mc H_{\mf d}}=\max\{\mf d,\mf d_{\mc H}\}$. 
	
	If $\alpha\leq 0$ is such that $\mt_{\mc H}F_0,\mt_{\mc H}F_0^\#\leq\alpha$ for some $F_0\in\mc H\setminus\{0\}$, then 
	\[
		\mc H_{(\alpha)}:= \big\{F\in\mc H:\,\mt_{\mc H} F,\, \mt_{\mc H} F^\# \leq\alpha\big\}\in\Sub^*\mc H
		\,,
	\]
	and we have $\mt_{\mc H}\mc H_{(\alpha)}=\alpha$. 

	Those dB-subspaces which are defined by mean type conditions will in general not exhaust all of $\Sub^*\mc H$. 
	However, sometimes, this also might be the case. 
\end{example}

Trivially, the set $\Sub\mc H$, and hence also each of the sets $\Sub_{\mf d}\mc H$, is partially ordered 
with respect to set-theoretic inclusion. One of the most fundamental and deep results in the theory of de~Branges
spaces is the \emph{Ordering Theorem for subspaces of $\mc H$}, cf.\ \cite[Theorem 35]{debranges:1968} where even a somewhat 
more general version is proved. 

\begin{ntheorem}[De~Branges' Ordering Theorem]{A35}
	Let $\mc H$ be a de~Branges space and let $\mf d:\bb C\to\bb N_0$. Then $\Sub_{\mf d}\mc H$ 
	is totally ordered. 
\end{ntheorem}

The chains $\Sub_{\mf d}\mc H$ have the following continuity property: 
For a dB-subspace $\mc L$ of $\mc H$, set 
\begin{equation}\label{A36}
	\breve{\mc L}:= \bigcap\big\{\mc K\in\Sub_{\mf d_{\mc L}}\mc H:\,\mc K\supsetneq\mc L\big\},\quad
	\text{ if }\mc L\neq\mc H
	\,,
\end{equation}
\[
	\tilde{\mc L}:= \clos_{\mc H}\bigcup\big\{\mc K\in\Sub_{\mf d_{\mc L}}\mc H:
	\,\mc K\subsetneq\mc L\big\},\quad\text{ if }\dim\mc L>1
	\,.
\]
Then 
\[
	\dim\big(\breve{\mc L}/\mc L\big)\leq 1\quad\text{ and }
	\quad\dim\big(\mc L/\tilde{\mc L}\big)\leq 1
	\,.
\]

\begin{example}{A27}
	Let us explicitly mention two examples of de~Branges spaces, which show in some sense extreme behaviour. 
	\begin{enumerate}[$(i)$]
	\item Consider the \emph{Paley--Wiener space} $\PW_a$ where $a>0$. This space is a de~Branges space. 
		It can be obtained as $\mc H(E)$ with $E(z)=e^{-iaz}$. The chain $\Sub^*(\PW_a)$ is equal to 
		\[
			\Sub^*\PW_a=\big\{\PW_b:\,0< b\leq a\big\}
			\,.
		\]
		Apparently, we have $\PW_b=(\PW_a)_{(b-a)}$, and hence in this example all dB-subspaces are obtained by 
		mean type restrictions. 
	\item In the study of the indeterminate Hamburger moment problem de~Branges spaces 
		occur which contain the set of all polynomials $\bb C[z]$ as a dense linear subspace, see e.g.\ 
		\cite{baranov:2006}, \cite{borichev.sodin:1998}, \cite[\S5.9]{dym.mckean:1976}
	
		If $\mc H$ is such that $\mc H=\clos_{\mc H}\bb C[z]$, then the chain $\Sub^*\mc H$ has order type 
		$\bb N$. In fact, 
		\[
			\Sub^*\mc H=\big\{\bb C[z]_n:\,n\in\bb N_0\big\}\cup\{\mc H\}
			\,,
		\]
		where $\bb C[z]_n$ denotes the set of all polynomials whose degree is at most $n$. 
	\end{enumerate}
	Examples of de~Branges spaces $\mc H$ for which the chain $\Sub^*\mc H$ has 
	all different kinds of order types can be constructed using 
	canonical systems of differential equations, see e.g.\ \cite[Theorems 37,38]{debranges:1968}, \cite{gohberg.krein:1970}, or 
	\cite{hassi.desnoo.winkler:2000}. 
\end{example}

With help of the estimates \eqref{A42}, it is easy to see that 
\[
	\mt\frac{K_{\mc H}(w,\cdot)}{\nabla_{\!\mc H}(z)}=0
	\,.
\] 
This implies that for any dB-subspace $\mc L$ of $\mc H$ the supremum in the definition 
of $\mt_{\mc H}\mc L$ is attained (e.g. 
on the reproducing kernel functions $K_{\mc L}(w,\cdot)$). Moreover, 
we obtain $\mt_{\mc H}\mc L=\mt_{\mc H}\nabla_{\!\mc L}$. 

Also the fact whether a given de~Branges space $\mc L$ is contained in $\mc H$ as a dB-subspace can be 
characterized via generating Hermite--Biehler functions. Choose $E,E_1\in\HB$ with $\mc H=\mc H(E)$ and $\mc L=\mc H(E_1)$. 
Then $\mc L\in\Sub^*\mc H$ if and only if there exists a $2\times 2$-matrix function $W(z)=(w_{ij}(z))_{i,j=1,2}$ such that 
the following four conditions hold:
\begin{enumerate}[$(i)$]
\item The entries $w_{ij}$ of $W$ are entire functions, satisify $w_{ij}^\#=w_{ij}$, and $\det W(z)=1$. 
\item The kernel 
	\[
		K_W(w,z):= \frac{W(z)JW(w)^*-J}{z-\qu w}
		\,, \qquad
		J:= 
		\begin{pmatrix}
			0 & -1\\
			1 & 0
		\end{pmatrix}
		\,,
	\]
	is positive semidefinite. 
\item Write $E=A-iB$ and $E_1=A_1-iB_1$ according to \eqref{A16}. Then 
	\[
		(A,B)=(A_1,B_1)W
		\,.
	\]
\item Denote by $\mc K(W)$ the reproducing kernel Hilbert space of $2$-vector functions generated by the kernel 
	$K_W(w,z)$. Then there exists no constant function $\binom uv\in\mc K(W)$ with $uA_1+vB_1\in\mc H(E_1)$. 
\end{enumerate}
Assuming that $\mc L\in\Sub^*\mc H$, the orthogonal complement of $\mc L$ in $\mc H$ can be described via the 
above matrix function $W$. In fact, the map $\binom{f_+}{f_-}\mapsto f_+A_1+f_-B_1$ is an 
isometric isomorphism of $\mc K(W)$ onto $\mc H\ominus\mc L$. 

Let us remark in this context that a function $uA+vB$ can also be written in the form 
$\lambda\cdot(e^{i\psi}E+e^{-i\psi}E^\#)$ and vice versa.
A detailed discussion of the situation when such functions belong
to $\mc H(E)$ and a criterion in terms of the zeros of $E$ can be found e.g.\ in \cite{baranov:2001}. 

Let us discuss in a bit more detail the particular situation that $\mc L\in\Sub^*\mc H$ and 
$\dim(\mc H/\mc L)=1$, cf.\ \cite[Theorem 29, Problem 87]{debranges:1968}. 
In this case $\mc L=\clos_{\mc H}(\dom S_{\mc H})$. Choose $E,E_1\in\HB$ with $\mc H=\mc H(E)$ and 
$\mc L=\mc H(E_1)$. Then the matrix $W$ introduced in the above item is a linear polynomial of the form 
\[
	W(z)=
	\begin{pmatrix}
		1-lz\cos\phi\sin\phi & lz\cos^2\phi\\
		-lz\sin^2\phi & 1+lz\cos\phi\sin\phi
	\end{pmatrix}
	\cdot M
	\,,
\]
where $\phi\in\bb R$, $l>0$, and $M$ is a constant $2\times 2$-matrix with real entries and determinant $1$. 
The space $\mc K(W)$ is spanned by the constant function $\binom{\cos\phi}{\sin\phi}$. We see that 
\[
	\mc H\ominus\mc L=\spn\Big\{(A_1,B_1)\binom{\cos\phi}{\sin\phi}\Big\}=
	\spn\Big\{(A,B)M^{-1}\binom{\cos\phi}{\sin\phi}\Big\}
	\,.
\]

\section{Admissible majorants in de~Branges spaces}

In what follows we will work with functions $\mf m$ which are defined on subsets of the closed upper 
half-plane and take nonnegative real values. To simplify notation we will often drop explicit notation of 
the domain of definition of the function $\mf m$. 

If $\mf m_1,\mf m_2:D\to [0,\infty)$, we write $\mf m_1\lesssim\mf m_2$ if there exists a positive constant 
$C$, such that $\mf m_1(z) \leq C \mf m_2(z)$, $z\in D$. Moreover, $\mf m_1\asymp\mf m_2$ stands for 
"$\mf m_1\lesssim\mf m_2$ and $\mf m_2\lesssim\mf m_1$". Using this notation, we can write 
\[
	R_{\mf m}(\mc H)=\big\{\,F\in\mc H:\,|F(z)|,|F^\#(z)|\lesssim \mf m(z),
	\ z\in D\,\big\}
	\,,
\]
compare with \deref{A1}. 
Our first aim is to show that $\mc R_{\mf m}(\mc H)$ will become a de~Branges subspace of $\mc H$, whenever 
$R_{\mf m}(\mc H)\neq\{0\}$, and $\mf m$ satisfies an obvious regularity condition. 

\begin{theorem}{A4}
	Let $\mc H$ be a de~Branges space, and let $\mf m:D\to[0,\infty)$ be a function with $D\subseteq\bb C^+\cup\bb R$. 
	Then $\mc R_{\mf m}(\mc H)\in\Sub\mc H$ if and only if $\mf m$ satisfies 
	\begin{axioms}{16mm}
	\item[Adm1] $\supp\mf d_{\mf m}\subseteq\bb R$;
	\item[Adm2] $R_{\mf m}(\mc H)$ contains a nonzero element.
	\end{axioms}
	In this case we have 
	\begin{equation}\label{A9}
		\mf d_{\mc R_{\mf m}(\mc H)}=\max\{\mf d_{\mf m},\mf d_{\mc H}\}\quad
		\text{ and } \quad
		\mt_{\mc H}\mc R_{\mf m}(\mc H)\leq\mt_{\mc H} \mf m
		\,.
	\end{equation}
\end{theorem}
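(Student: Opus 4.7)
The proof naturally splits into necessity of the two conditions, verification of the dB-subspace axioms, and the two displayed formulae. Necessity of (Adm2) is immediate because every dB-subspace is nonzero and $R_{\mf m}(\mc H)$ is dense in $\mc R_{\mf m}(\mc H)$. For necessity of (Adm1), I would argue by contradiction: if $w_0\in\bb C^+$ has $\mf d_{\mf m}(w_0)\geq 1$, one can choose $z_k\in D$ with $z_k\to w_0$ and $\mf m(z_k)\to 0$, so that $|F(z_k)|\leq C\mf m(z_k)\to 0$ forces $F(w_0)=0$ for every $F\in R_{\mf m}(\mc H)$ and, by continuity of point evaluation, for every $F\in\mc R_{\mf m}(\mc H)$. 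Taking any nonzero $F$ in this space and applying (dB3) in $\mc R_{\mf m}(\mc H)$ iteratively $\mf d_F(w_0)$ times produces an element that does not vanish at $w_0$, a contradiction.

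For sufficiency, closedness and linearity are built into the definition, and $\#$-invariance follows since the defining bound in $R_{\mf m}(\mc H)$ is symmetric in $F$ and $F^\#$ and $\#$ is an $\mc H$-isometry by (dB2). The substantive step is (dB3), for which I would proceed in three stages. First, for nonreal $w$, show that $\Phi_w F := \frac{z-\bar w}{z-w}F$ maps $\{F\in R_{\mf m}(\mc H):F(w)=0\}$ into $R_{\mf m}(\mc H)$: on $D$ away from $w$ the factor $|z-\bar w|/|z-w|$ is bounded, and on a neighbourhood of $w$ the entire function $F(z)/(z-w)$ is locally bounded while (Adm1) gives $\mf m\geq c>0$, so the product is $\lesssim\mf m$; the same argument at $\bar w$ takes care of $(\Phi_w F)^\#=\Phi_{\bar w}F^\#$. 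Second, given any $u_0\in R_{\mf m}(\mc H)\setminus\{0\}$ from (Adm2) and writing $n:=\mf d_{u_0}(w)$, iterating the first stage produces $u:=\Phi_w^n u_0\in R_{\mf m}(\mc H)$ with $u(w)\neq 0$. Third, for $F\in\mc R_{\mf m}(\mc H)$ with $F(w)=0$ and $F_k\in R_{\mf m}(\mc H)$ converging to $F$, the corrector $F_k':=F_k-(F_k(w)/u(w))u$ lies in $R_{\mf m}(\mc H)$ and vanishes at $w$, so $\Phi_w F_k'\in R_{\mf m}(\mc H)$ by the first stage; since $F_k(w)\to 0$, $F_k'\to F$ in $\mc H$, and the isometry property of $\Phi_w$ on $\{G\in\mc H:G(w)=0\}$ gives $\Phi_w F\in\mc R_{\mf m}(\mc H)$.

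For the zero-divisor identity, the inequality $\geq$ holds at real $w$ because every $F\in R_{\mf m}(\mc H)$ satisfies $\mf d_F(w)\geq\mf d_{\mf m}(w)$ (from $|F|\lesssim\mf m$) and $\mf d_F(w)\geq\mf d_{\mc H}(w)$, and the estimate $|F_k(z)-F(z)|\leq\|F_k-F\|\nabla_{\!\mc H}(z)$ from \eqref{A21} transfers these zero orders to the closure; at nonreal $w$ both sides vanish by (Adm1) and \reref{A29}. For $\leq$ at real $w$, start with $F_0\in R_{\mf m}(\mc H)\setminus\{0\}$: as long as $\mf d_{F_0}(w)$ exceeds $\mf d_{\mc H}(w)$, \reref{A29} gives $F_0/(z-w)\in\mc H$; as long as it exceeds $\mf d_{\mf m}(w)$, the local lower bound $\mf m(z)\gtrsim |z-w|^{\mf d_{\mf m}(w)}$ together with a trivial estimate away from $w$ keeps $F_0/(z-w)$ in $R_{\mf m}(\mc H)$; iteration terminates at multiplicity $\max\{\mf d_{\mf m}(w),\mf d_{\mc H}(w)\}$. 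For the mean-type bound, $|F|\leq C\mf m$ on $D$ yields $|F/\nabla_{\!\mc H}|\lesssim|\mf m/\nabla_{\!\mc H}|$ along every admissible ray of $D$, so the infimum in \deref{A7} defining $\mt_{\mc H}\mf m$ majorises the analogous infimum computing $\mt_{\mc H}F$ for $F\in R_{\mf m}(\mc H)$; invoking \reref{A30} to close the supremum under $\mc H$-limits completes the argument.

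The main obstacle is the third stage of (dB3). The naive correction by $F_k(w)K_{\mc H}(w,\cdot)/K_{\mc H}(w,w)$ fails because reproducing kernels need not lie in $R_{\mf m}(\mc H)$; the entire argument hinges on constructing the corrector $u\in R_{\mf m}(\mc H)$ from \emph{inside} the set $R_{\mf m}(\mc H)$ by iterating $\Phi_w$ against the single nonzero element supplied by (Adm2), which is exactly the operation that (Adm1) legitimates.
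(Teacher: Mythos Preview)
Your proof is correct and follows essentially the same approach as the paper: both verify that $R_{\mf m}(\mc H)$ is invariant under $\#$ and under removal of nonreal zeros, then use a corrector nonvanishing at $w$ (built from a nonzero element of $R_{\mf m}(\mc H)$ by iterated zero-removal) to pass the (dB3) axiom to the closure. The paper packages the closure step as a separate lemma and phrases it via the continuous difference-quotient operator $\rho_{F_0,z_0}$ rather than your Blaschke-factor isometry $\Phi_w$, but this is only a cosmetic difference; one minor imprecision is your ``same argument at $\bar w$'' for $(\Phi_w F)^\#$, which is actually easier since $\bar w\in\bb C^-$ lies away from $D$, so $|z-w|/|z-\bar w|\leq 1$ globally on $D$.
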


\noindent
Necessity of the conditions (Adm1) and (Adm2) is easy to see. 
In the proof of sufficiency we will employ the following elementary lemma. 

\begin{lemma}{A3}
	Let $\mc H$ be a de~Branges space and let $L$ be a nonzero linear subspace of $\mc H$ such that:
	\begin{enumerate}[$(i)$]
	\item if $F\in L$, then also $F^\#\in L$;
	\item if $F\in L$ and $z_0\in\bb C\setminus\bb R$ 
	with $F(z_0)=0$, then also $\frac{F(z)}{z-z_0}\in L$.
	\end{enumerate}
	Then $\clos_{\mc H}L\in\Sub\mc H$. 
\end{lemma}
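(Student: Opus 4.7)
The plan is to verify that $\clos_{\mc H}L$ satisfies the three conditions characterizing a dB-subspace listed just before \exref{A26}. Closedness is immediate from the definition, and linearity passes from $L$ to its closure by continuity of addition and scalar multiplication. Preservation under $F\mapsto F^\#$ is straightforward too: axiom (dB2) makes this map an isometry of $\mc H$, so hypothesis $(i)$ on $L$ carries over to the closure. The substantial content of the lemma lies in the division property.

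To establish it, take $F\in\clos_{\mc H}L$ with $F(z_0)=0$ for some $z_0\in\bb C\setminus\bb R$, and pick $F_n\in L$ with $F_n\to F$ in $\mc H$. First I would produce an auxiliary element $G\in L$ with $G(z_0)\ne 0$: starting from any nonzero $G_0\in L$, apply hypothesis $(ii)$ repeatedly to strip off the factor $(z-z_0)^k$, where $k$ is the order of $z_0$ as a zero of $G_0$ (finite because $G_0$ is a nonzero entire function). Then, by linearity, the function $F_n-\frac{F_n(z_0)}{G(z_0)}G$ lies in $L$ and vanishes at $z_0$, so hypothesis $(ii)$ of the lemma yields
\[
H_n(z):=\frac{F_n(z)-\frac{F_n(z_0)}{G(z_0)}G(z)}{z-z_0}\in L\,.
\]

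Now I would recognize $H_n=\rho_{G,z_0}F_n$, where $\rho_{G,z_0}$ is the bounded difference quotient operator on $\mc H$ supplied by \reref{A24} (applicable since $G\in\mc H\subseteq\Assoc\mc H$ and $G(z_0)\ne 0$). Boundedness then gives $H_n\to\rho_{G,z_0}F$ in $\mc H$, and using $F(z_0)=0$ this limit equals $\frac{F(z)}{z-z_0}$, which therefore belongs to $\clos_{\mc H}L$.

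The only real obstacle is the passage to the norm limit in $H_n$: without the boundedness of $\rho_{G,z_0}$ on $\mc H$, one would control only pointwise behavior of the difference quotients. This is precisely why the initial step of producing $G\in L$ with $G(z_0)\ne 0$ is crucial — it is what activates the bounded operator supplied by \reref{A24}.
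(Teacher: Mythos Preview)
Your proof is correct and follows essentially the same route as the paper's: both arguments hinge on producing an element of $L$ that does not vanish at $z_0$ and then exploiting the boundedness of the difference quotient operator $\rho_{G,z_0}$ from \reref{A24} to pass the division property to the closure. You spell out the sequence argument and the construction of $G$ more explicitly, while the paper phrases the same step as the containment $\rho_{F_0,z_0}(\clos_{\mc H}L)\subseteq\clos_{\mc H}(\rho_{F_0,z_0}L)\subseteq\clos_{\mc H}L$, but the substance is identical.
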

\begin{proof}
	The mapping $F \mapsto  F^\#$ is continuous on $\mc H$. We have $L^\#\subseteq L$, and hence 
	$(\clos_{\mc H}L)^\#\subseteq\clos_{\mc H}(L^\#)\subseteq\clos_{\mc H}L$. 
	
	Let $F\in\clos_{\mc H}L$, $z_0\in\bb C\setminus\bb R$ with $F(z_0)=0$, be given. We have
	to show that
	\[
		\frac{z-\qu{z_0}}{z-z_0}F(z)\in\clos_{\mc H}L
		\,,
	\]
	or, equivalently, that $\frac{F(z)}{z-z_0}\in\clos_{\mc H}L$.  
	Choose an element $F_0\in L$ with $F_0(z_0)=1$. Such a choice 
	is possible by $(ii)$. The mapping $\rho_{F_0,z_0} : F\mapsto\frac{F(z)-F(z_0)F_0(z)}{z-z_0}$ 
	is continuous. Moreover, by $(ii)$, we have $\rho_{F_0,z_0} L\subseteq L$. Thus
	\[
		\rho_{F_0,z_0}\big(\clos_{\mc H}L\big)\subseteq\clos_{\mc H}(\rho_{F_0,z_0} 
		L)\subseteq \clos_{\mc H}L
		\,.
	\]
	In particular, if $F\in\clos_{\mc H}L$ and $F(z_0)=0$, then $\frac{F(z)}{z-z_0}\in\clos_{\mc H}L$. 
	
	Together, we conclude that $\clos_{\mc H}L\in\Sub\mc H$. 
\end{proof}

\begin{proofof}{\thref{A4}}
	Assume first that $\mc R_{\mf m}(\mc H)\in\Sub\mc H$. Then, clearly, $R_{\mf m}(\mc H)\neq\{0\}$, i.e.\ 
	(Adm2) holds. 
	Let $w\in\bb C$, and choose $F\in R_{\mf m}(\mc H)$ with $\mf d_F(w)=\mf d_{\mc R_{\mf m}(\mc H)}(w)$. 
	By analyticity, we have for some disk $U$ centred at $w$,
	\[
		\inf_{z\in U}\Big|\frac{F(z)}{(z-w)^{\mf d_F(w)}}\Big|>0
		\,.
	\]
	Since $|F(z)|\lesssim\mf m(z)$, $z\in U\cap D$, we obtain that 
	$\mf d_{\mf m}(w)\leq \mf d_F(w)=\mf d_{\mc R_{\mf m}(\mc H)}(w)$.
	It follows that $\mf d_{\mf m}$ takes only finite values and that $\supp\mf d_{\mf m}$ is a discrete subset of $\bb R$. 
	In particular (Adm1) holds. Moreover, we see that 
	\[
		\mf d_{\mc R_{\mf m}(\mc H)} \geq \max\{\mf d_{\mf m},\mf d_{\mc H}\}
		\,.
	\]
	For the converse assume that $\mf m$ satisfies the conditions \textrm{(Adm1)}
	and \textrm{(Adm2)}. We will apply \leref{A3} with
	$L:= R_{\mf m}(\mc H)$. The hypothesis $(i)$ of \leref{A3} is satisfied by the definition of $R_{\mf m}(\mc H)$.
	Let $F\in R_{\mf m}(\mc H)$, $w\in\bb C$, and assume that $\mf d_F(w)>
	\max\{\mf d_{\mf m}(w),\mf d_{\mc H}(w)\}$.
	Then $\frac{F(z)}{z-w}\in\mc H$.
	Let $U$ be a compact neighbourhood of $w$ such that
	\[
		\inf_{\substack{z\in U\cap D\\ |z-w|^ {\mf d_{\mf m}(w)}
		\neq 0}}\frac{\mf m(z)}{|z-w|^{\mf d_{\mf m}(w)}}>0
		\,.
	\]
	For $z\not\in U$, we have $|z-w| \gtrsim 1$, and hence
	$ |z-w|^{-1} |F(z)| \lesssim |F(z)|\lesssim \mf m(z)$, $z\in D\setminus U$.
	The function $ (z-w)^{-\mf d_{\mf m}(w)-1} F(z)$ 
	is analytic, and hence bounded, on $U$. It follows that
	\[
		\Big|\frac{F(z)}{(z-w)^{\mf d_{\mf m}(w)+1}}\Big|
		\lesssim\frac{\mf m(z)}{|z-w|^{\mf d_{\mf m}(w)}},\quad
		z\in
		\begin{cases}
			(D\cap U)\setminus\{w\}, &\ \mf d_{\mf m}(w)>0,\\
			D\cap U, &\ \mf d_{\mf m}(w)=0,
		\end{cases}
	\]
	and hence
	\[
		\Big|\frac{F(z)}{z-w}\Big|\lesssim\mf m(z),\qquad  z\in U\cap D
		\,.
	\]
	The same argument will show that $\big|(\frac{F(z)}{z-w})^\#\big|
	\lesssim\mf m(z)$, $z\in D$, and hence 
	$\frac{F(z)}{z-w}\in R_{\mf m}(\mc H)$.
	Since, by (Adm1), $\mf d_{\mf m}(w)=\mf d_{\mc H}(w)=0$ for
	$w\in\bb C\setminus \bb R$, 
	we conclude that $R_{\mf m}(\mc H)$ satisfies the hypothesis
	$(ii)$ of \leref{A3}. Moreover,
	\[
		\min_{F\in R_{\mf m}(\mc H)\setminus\{0\}} \mf d_F(w)\leq\max\{\mf d_{\mf m}(w),\mf d_{\mc H}(w)\}
		\,.
	\]
	Hence $\mc R_{\mf m}(\mc H)\in\Sub(\mc H)$, and $\mf d_{\mc R_{\mf m}(\mc H)}\leq
	\max\{\mf d_{\mf m},\mf d_{\mc H}\}$.
	We have proved the asserted equivalence and equality of divisors in \eqref{A9}. 
		
	Let $F\in R_{\mf m}(\mc H)$. Then $|F(z)|\lesssim\mf m(z)$, $z\in D$, and hence $\mt_{\mc H} F\leq\mt_{\mc H}\mf m$. 
	This proves the assertion concerning mean types. 
\end{proofof}

\thref{A4} justifies the following definition. 

\begin{definition}{A2}
	Let $\mc H$ be a de~Branges space. A function $\mf m:D\to[0,\infty)$ where $D\subseteq\bb C^+\cup\bb R$, is called an 
	\emph{admissible majorant for $\mc H$} if it satisfies the conditions (Adm1) and (Adm2) of \thref{A4}. 
	
	The set of all admissible majorants is denoted by $\Adm\mc H$. For the set of all those admissible majorants 
	which are defined on a fixed set $D$, we write $\Adm_D\mc H$. 
\end{definition}

\begin{remark}{A5}
	\hspace*{0pt}
	\begin{enumerate}[$(i)$]
	\item We allow majorization on a subset of the closed upper half-plane. Of course, the same definitions could be 
		made for majorants $\mf m$ defined on just any subset of the complex plane. However, due to the symmetry 
		with respect to the real line which is included into the definition of $R_{\mf m}$, this would not be a gain 
		in generality. 
	\item Majorants defined on bounded sets give only trivial results: If $D\subseteq\bb C^+\cup\bb R$ is bounded and 
		$\mf m\in\Adm_D\mc H$, then $R_{\mf m}(\mc H)=\mc R_{\mf m}(\mc H)=
		\mc H_{\mf d_{\mf m}}$.	In view of this fact, we shall once 
                and for all exclude bounded sets $D$ from our considerations. 
	\end{enumerate}
\end{remark}

\begin{remark}{A59}
	As we have already noted after the definition of $\mf d_{\mf m}$, we have $\mf d_{\mf m}(w)=0$ whenever $w\not\in\qu D$. 
	The first formula in \eqref{A9} hence gives 
	\[
		\mf d_{\mc R_{\mf m}(\mc H)}(x)=\mf d_{\mc H}(x),\quad x\in\bb R\setminus\qu D
		\,.
	\]
	It is worth to notice that this statement can also be read in a slightly different way: 
	Assume that $\mc L\in\Sub\mc H$ is represented as $\mc L=\mc R_{\mf m}(\mc H)$ with some $\mf m\in\Adm_D\mc H$. 
	Then $\mf d_{\mc L}|_{\bb R\setminus\qu D}=\mf d_{\mc H}$. 
	
	Assume that $w\in\bb R\setminus\qu D$ and set 
	\[
		\mf d(z)=
		\begin{cases}
			0, &\ z\neq w,\\
			\mf d_{\mc H}(w)+1, &\ z=w.
		\end{cases}
	\]
	Unless $\dim\mc H=1$, the subspace $\mc H_{\mf d}$ will be a dB-subspace of $\mc H$. 
	It follows from the above notice that no subspace $\mc L\in\Sub\mc H$ with $\mc L\subseteq\mc H_{\mf d}$
	can be realized as $\mc R_{\mf m}(\mc H)$ with some $\mf m\in\Adm_D\mc H$. 
\end{remark}

Let us provide some standard examples of admissible majorants. We will mostly work with these majorants. 

\begin{example}{A8}
	An obvious, but surprisingly important, example of admissible majorants is provided by 
	the functions $\nabla_{\!\mc L}|_{\bb C^+\cup\bb R}$, $\mc L\in\Sub\mc H$. 
	Since always $\mc L\subseteq R_{\nabla_{\!\mc L}|_{\bb C^+\cup\bb R}}(\mc H)$, 
	(Adm2) is satisfied. Also, it follows that $\mf d_{\nabla_{\!\mc L}}\leq\mf d_{\mc L}$
	and this yields (Adm1). Thus $\nabla_{\!\mc L}|_{\bb C^+\cup\bb R}\in\Adm\mc H$. 

	The function $\nabla_{\!\mc L}$ is actually for several reasons a distinguished admissible majorant. 
	This will be discussed in more detail later (see, also, the 
	forthcoming paper \cite{baranov.woracek:sprm}). 
\end{example}

\begin{example}{A58}
	Other examples of admissible majorants can be constructed from functions associated to the space $\mc H$. 
	Let $S\in\Assoc\mc H$, and assume that $S$ does not vanish identically and does not satisfy 
	$\mf d_S(z)=\mf d_{\mc H}(z)$, $z\in\bb C$. Moreover, let $D\subseteq\bb C^+\cup\bb R$ be such that 
	$\mf m_S(z)\neq 0$ for all $z\in\qu D\setminus\bb R$. Define 
	\[
		\mf m_S(z):= \frac{\max\{|S(z)|,|S^\#(z)|\}}{|z+i|},\qquad z\in\bb C^+\cup\bb R
		\,,
	\]
	then $\mf m_S|_D\in\Adm\mc H$. In fact, we have 
	\[
		\mf d_{\mf m_S|_D}(w)=
		\begin{cases}
			\mf d_S(w), &\ w\in\bb R\cap\qu D,\\
			0, &\ \text{otherwise},
		\end{cases}
	\]
	and, if $z_0\in\bb C$ is such that $\mf d_S(z_0)>\mf d_{\mc H}(z_0)$, then 
	the function $\frac{S(z)}{z-z_0}$ belongs to $R_{\mf m}(\mc H)$. 

	Provided $D$ containes a part of some ray in $\bb C^+$ with positive logarithmic density, we have 
	\[
		\mt_{\mc H}\mc R_{\mf m_S|_D}(\mc H)= \mt_{\mc H} \mf m_S|_D=\max\{\mt_{\mc H}S,\mt_{\mc H}S^\#\}
		\,.
	\]
\end{example}

\begin{example}{A50}
	Let $\mc H(E)$ be a de~Branges space, and let $\mc L\in\Sub\mc H(E)$. Choose $E_1\in\HB$ with $\mc L=\mc H(E_1)$. 
	Then $E_1(z)\in\Assoc\mc H(E)$ and the conditions required in \exref{A58} are fullfilled for $E_1$. 
	Since each kernel function $K_{\mc L}(w,\cdot)$ of the space $\mc L$ 
        belongs to $R_{\mf m_{E_1}}(\mc H)$, we always have 
        $\mc L\subseteq\mc R_{\mf m_{E_1}|_{\bb C^+}}(\mc H)$. 
	
	Note that the space $R_{\mf m_{E_1}}(\mc H)$ does not depend on the particular choice of $E_1$ with 
	$\mc L=\mc H(E_1)$. In fact, if 
	$E_1$ and $E_2$ both generate the space $\mc L$, then we will have $|E_1(z)|\asymp|E_2(z)|$ 
	throughout $\bb C^+\cup\bb R$, and hence also $\mf m_{E_1}\asymp\mf m_{E_2}$. 
\end{example}

Let us state explicitly how the majorants $\mf m_{E_1}$ and $\nabla_{\!\mc L}$ are related among each other 
and with the space $\mc L$. By \eqref{A42} we have $\mf m_{E_1}\lesssim\nabla_{\!\mc L}$. Moreover, 
\[
	\begin{array}{ccccc}
		&& \mc R_{\mf m_{E_1}}(\mc H) && \\[-3pt]
		& \parbox{2mm}{\hspace*{1mm}\begin{rotate}{40} $\subseteq$ \end{rotate}} && 
			\hspace*{-1mm}\parbox{2mm}{\vspace*{-3mm}\begin{rotate}{-40} $\subseteq$\end{rotate}} &\\[-3pt]
		\mc L &&&& \hspace*{-2mm}\mc R_{\nabla_{\!\mc L}}(\mc H)\\[-3pt]
		& \parbox{2mm}{\hspace*{0mm}\begin{rotate}{-40} $\subseteq$ \end{rotate}} && 
			\hspace*{-1mm}\parbox{2mm}{\vspace*{3mm}\begin{rotate}{40} $\subseteq$\end{rotate}} &\\[-3pt]
		&& R_{\nabla_{\!\mc L}}(\mc H) && \\
	\end{array}
\]

\section{Majorization on rays which accumulate to $\bm i\infty$}

In this section we consider subspaces which are generated by majorization along rays 
being not parallel to the real axis. The following two statements are our main results in this respect. 

\begin{theorem}{A12}
	Consider $D:= i[h,\infty)$ where $h>0$. Let $\mc H$ be a de~Branges space, and let $\mc L\in\Sub^*\mc H$. 
	Then $\mc L=\mc R_{\nabla_{\!\mc L}|_D}(\mc H)$. 
\end{theorem}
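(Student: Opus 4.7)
The plan is to combine \thref{A4} and the Ordering Theorem with a one-dimensional gap argument, using the description of $\breve{\mc L}\ominus\mc L$ in the case $\dim(\breve{\mc L}/\mc L)=1$ together with an asymptotic analysis on the imaginary ray.

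First I would verify the easy inclusion $\mc L\subseteq\mc R_{\nabla_{\!\mc L}|_D}(\mc H)$: given $F\in\mc L$, the axiom (dB2) applied inside $\mc L$ gives $F^\#\in\mc L$ with $\|F^\#\|_{\mc L}=\|F\|_{\mc L}$, and the Schwarz inequality \eqref{A21} within $\mc L$ yields $|F(z)|,|F^\#(z)|\leq\|F\|_{\mc L}\nabla_{\!\mc L}(z)$ for all $z\in\bb C$, in particular on $D$. For the reverse inclusion I would first check $\nabla_{\!\mc L}|_D\in\Adm_D\mc H$: (Adm1) is automatic since $D\cap\bb R=\emptyset$ and $\nabla_{\!\mc L}$ is positive on $D$, so $\mf d_{\nabla_{\!\mc L}|_D}\equiv 0$; (Adm2) follows from the easy inclusion. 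Then \thref{A4} together with \eqref{A9} yields $\mc R:=\mc R_{\nabla_{\!\mc L}|_D}(\mc H)\in\Sub^*\mc H$, and the Ordering Theorem $\ntref{A35}$ combined with $\mc L\subseteq\mc R$ reduces the remaining task to excluding the strict inclusion $\mc L\subsetneq\mc R$.

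Assume therefore $\mc L\subsetneq\mc R$. By the continuity property \eqref{A36} of the chain $\Sub^*\mc H$, the immediate successor $\breve{\mc L}$ of $\mc L$ satisfies $\dim(\breve{\mc L}/\mc L)=1$ and $\breve{\mc L}\subseteq\mc R$. The description of one-dimensional orthogonal complements at the end of Section 2, applied to the pair $\mc L\subset\breve{\mc L}$ with $\breve{\mc L}$ in the role of $\mc H$, yields some $\phi\in\bb R$ and a generator $E_1=A_1-iB_1\in\HB$ of $\mc L$ such that $\breve{\mc L}\ominus\mc L$ is spanned by $G_0:=A_1\cos\phi+B_1\sin\phi=\tfrac12(e^{i\phi}E_1+e^{-i\phi}E_1^\#)$. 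Since $0\neq G_0\in\mc R$, the bound $|G_0(iy)|\leq C\nabla_{\!\mc L}(iy)$ for $y\geq h$, combined with \eqref{A23} for $\nabla_{\!\mc L}(iy)$ and division by $|E_1(iy)|$, rewrites, with $r:=E_1^\#/E_1$ (an inner function in $\bb C^+$), as
\[
|e^{i\phi}+e^{-i\phi}r(iy)|^2\leq\frac{C'(1-|r(iy)|^2)}{y},\qquad y\geq h.
\]

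The hard part will be extracting a contradiction from this inequality. The triangle estimate $|e^{i\phi}+e^{-i\phi}r(iy)|\geq 1-|r(iy)|$ combined with the above forces $1-|r(iy)|=O(1/y)$, which resubstituted yields $|e^{i\phi}+e^{-i\phi}r(iy)|=O(1/y)$; together with the lower bound from \eqref{A42} giving $1-|r(iy)|^2\gtrsim 1/y$, one deduces that $r(iy)$ converges to the unit-modulus value $-e^{2i\phi}$ non-tangentially at rate $\sim 1/y$. I expect to close the argument via a Schwarz--Pick/Julia-type rigidity analysis for the inner function $r=E_1^\#/E_1$: in the typical case $\mt E_1>\mt E_1^\#$ the radial decay $|r(iy)|\to 0$ already contradicts $r(iy)\to -e^{2i\phi}$ of unit modulus, while in the balanced case the phase-function identity \eqref{A33} for $\varphi_{E_1}$ should force the Julia angular derivative of $r$ at $i\infty$ into a configuration incompatible with the hypothesis $G_0\notin\mc L$ inherent in the gap $\dim(\breve{\mc L}/\mc L)=1$.
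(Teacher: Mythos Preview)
Your argument has a genuine gap at the reduction step. From $\mc L\subsetneq\mc R$ and the Ordering Theorem you correctly obtain $\breve{\mc L}\subseteq\mc R$, but you then pass to the pointwise estimate $|G_0(iy)|\le C\,\nabla_{\!\mc L}(iy)$ from $G_0\in\mc R$. This does not follow: $\mc R=\clos_{\mc H}R_{\nabla_{\!\mc L}|_D}(\mc H)$, and membership in the closure carries no pointwise majorization. What you would need is $G_0\in R_{\nabla_{\!\mc L}|_D}(\mc H)$. You can certainly find some $F\in R_{\nabla_{\!\mc L}|_D}(\mc H)\setminus\mc L$ (and even arrange $F\perp\mc L$ by subtracting $P_{\mc L}F\in\mc L\subseteq R$), but nothing in your outline forces such an $F$ to lie in $\breve{\mc L}$, and your asymptotic step is tailored to the specific function $G_0=\tfrac12(e^{i\phi}E_1+e^{-i\phi}E_1^\#)$, not to an arbitrary element of $\mc H\ominus\mc L$. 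There is also the separate issue that $\dim(\breve{\mc L}/\mc L)=1$ is not automatic: one may have $\breve{\mc L}=\mc L$ (e.g.\ $\mc L=\PW_b$ inside $\PW_a$), and then the one-dimensional gap scheme does not even start.

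The paper closes exactly this gap by showing directly that $R_{\nabla_{\!\mc L}|_D}(\mc H)\subseteq\breve{\mc L}$, without the Ordering Theorem. For $F\in R_{\nabla_{\!\mc L}|_D}(\mc H)$ and $H\in\mc H\ominus\mc L$ one forms the entire function of zero exponential type
\[
\Phi_{F,H}(w)=\Big(\frac{F(z)-\tfrac{F(w)}{G(w)}G(z)}{z-w},\,H\Big)_{\!\mc H},\qquad G\in\Assoc\mc L,
\]
and the bound $|F(iy)|\lesssim\nabla_{\!\mc L}(iy)$ combined with \eqref{A42} forces $\Phi_{F,H}(iy)\to 0$ as $|y|\to\infty$; Phragm\'en--Lindel\"of then gives $\Phi_{F,H}\equiv 0$, whence $F\in\Assoc\mc L$ and so $F\in\breve{\mc L}$ by \leref{A17}. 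Only after this reduction does the codimension-one asymptotic argument (your final paragraph; the paper's \leref{A52}) apply, now legitimately inside $\breve{\mc L}$. Your Julia-type intuition for that last step is on the right track---the paper finishes via \cite[Theorem~22]{debranges:1968}, which gives $\tfrac{1}{y}\tfrac{B_1(iy)}{A_1(iy)}\to 0$ when $A_1\notin\mc L$---but the essential missing ingredient in your plan is the $\Phi_{F,H}$ mechanism placing $R$ (not merely $\mc R$) inside $\breve{\mc L}$.
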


\begin{theorem}{A37}
	Consider $D:= e^{i\pi\beta}[h,\infty)$ where $h>0$ and $\beta\in(0,\frac 12)$. 
	Let $\mc H$ be a de~Branges space, assume that each element of $\mc H$ is of zero type with 
	respect to the order $\rho:= (2-2\beta)^{-1}$, and let $\mc L\in\Sub^*\mc H$. 
	Then $\mc L=\mc R_{\nabla_{\!\mc L}|_D}(\mc H)$. 
\end{theorem}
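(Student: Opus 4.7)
The inclusion $\mc L \subseteq \mc R_{\nabla_{\!\mc L}|_D}(\mc H)$ is immediate from \eqref{A21} together with the invariance of $\mc L$ under $F \mapsto F^\#$, giving $|F(z)|,|F^\#(z)| \le \|F\|_{\mc L}\nabla_{\!\mc L}(z)$. For the reverse inclusion, the plan is to reduce to \thref{A12}: I will show that every $F \in R_{\nabla_{\!\mc L}|_D}(\mc H)$ satisfies $|F(iy)|, |F^\#(iy)| \lesssim \nabla_{\!\mc L}(iy)$ for all sufficiently large $y$, whereupon \thref{A12} forces $F \in \mc L$.

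To set up the reduction, fix $E_1 \in \HB$ with $\mc L = \mc H(E_1)$. The upper estimate in \eqref{A42} yields $\nabla_{\!\mc L}(z) \le |E_1(z)|/(2\sqrt{\pi \Im z})$ for $z \in \bb C^+$, and on $D$ one has $\Im z = |z|\sin\pi\beta$, so the hypothesised majorization sharpens to $|F(z)|, |F^\#(z)| \lesssim |E_1(z)|\,|z|^{-1/2}$ on $D$. Using $|F^\#(z)| = |F(\overline z)|$ and the symmetry $\nabla_{\!\mc L}(\overline z) = \nabla_{\!\mc L}(z)$, the analogous bounds transfer to the reflected ray $\overline D := e^{-i\pi\beta}[h,\infty)$; thus both $F$ and $F^\#$ are controlled by $\nabla_{\!\mc L}$ throughout $D \cup \overline D$.

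Now consider the sector $S := \{z : \pi\beta < \arg z < 2\pi - \pi\beta\}$, bounded by $D$ and $\overline D$, of opening $\theta_0 := 2\pi(1-\beta)$ and containing the positive imaginary half-axis. The \emph{critical} Phragmén--Lindelöf order for $S$ is $\pi/\theta_0 = 1/(2-2\beta) = \rho$, precisely the order supplied by the zero-type hypothesis; since $F$, $F^\#$ and $E_1$ (the latter belonging to $\Assoc\mc H$) are all of zero type at this order, a zero-type version of Phragmén--Lindelöf is in principle available in $S$. The main obstacle I anticipate is twofold: the boundary bound $C\nabla_{\!\mc L}(z)$ on $D \cup \overline D$ is not a constant, and we are forced to work at the \emph{critical} growth order (the standard subcritical PL does not apply, only the zero-type refinement does). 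A natural route to absorb the variable bound is to work with the analytic ratios $F/E_1$ on $S \cap \bb C^+$ and $F^\#/E_1$ (lifted through $z \mapsto \overline z$) on $S \cap \bb C^-$, where on $D \cup \overline D$ they satisfy only the power bound $\lesssim |z|^{-1/2}$; a conformal straightening $z \mapsto z^{1/(2-2\beta)}$ (after rotating $S$ so that one of its boundary rays is $\bb R_+$) then sends $S$ to the upper half-plane and order-$\rho$ zero-type in $S$ to order-$1$ zero-type in $\bb C^+$, so that the critical sectorial problem is reduced to a standard half-plane Phragmén--Lindelöf at subcritical order applied to the transformed ratios.

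Once the sought estimate $|F(iy)|, |F^\#(iy)| \lesssim \nabla_{\!\mc L}(iy)$ is obtained for $y$ large, invoking \thref{A12} yields $F \in \mc L$ and thereby completes the reverse inclusion.
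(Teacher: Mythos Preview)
Your plan has a genuine gap at the Phragm\'en--Lindel\"of step. The sector $S$ straddles the real axis, but $F/E_1$ is analytic only in $\bb C^+$ (the zeros of $E_1$ lie in $\overline{\bb C^-}$). Patching $\log|F/E_1|$ on $S\cap\bb C^+$ with $\log|F/E_1^\#|$ on $S\cap\bb C^-$ gives a function that matches continuously on $\bb R$, but it is \emph{not} subharmonic there: the jump of the normal derivative across $\bb R$ equals $-2\varphi_{E_1}'(x)<0$ (this is exactly the statement $|E_1|>|E_1^\#|$ in $\bb C^+$), so the distributional Laplacian acquires a negative singular part on $\bb R$ and PL does not apply. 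There is no evident single subharmonic function on all of $S$ encoding $|F|/\nabla_{\!\mc L}$. A second difficulty: even on $S\cap\bb C^+$, certifying that $\log|F/E_1|$ is of zero type at the critical order $\rho$ requires a \emph{lower} bound $\log|E_1(z)|\geq -o(|z|^\rho)$, which does not follow from $E_1$ having zero type (that is only an upper bound). Also, the conformal straightening sends zero type order $\rho$ in $S$ to zero type order $1$ in the half-plane, which is still the critical order there, not subcritical.

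The paper avoids these obstructions by working not with $F/E_1$ but with the auxiliary \emph{entire} function $\Phi_{F,H}$ of \eqref{A56}, for $H\in\mc H\ominus\mc L$. Being entire, no patching is needed; its growth is controlled directly through the integral estimate \eqref{A47}, which gives order $\rho$ zero type via bounds on the imaginary axis, and shows $\Phi_{F,H}\to 0$ along $D$ and $\overline D$. PL in the two complementary sectors bounded by $D\cup\overline D$ then forces $\Phi_{F,H}\equiv 0$, whence $F\in\Assoc\mc L$, so $F\in\breve{\mc L}$ by \leref{A17}, and \leref{A52} closes the remaining one-dimensional gap.
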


\begin{remark}{A55}
	\hspace*{0pt}
	\begin{enumerate}[$(i)$]
	\item In both theorems we have $\qu D\cap\bb R=\emptyset$. Hence, the requirement $\mc L\in\Sub^*\mc H$ is 
		necessary in order that $\mc L$ can be represented in the form $\mc R_{\mf m}(\mc H)$ with some $\mf m\in\Adm_D\mc H$, 
		cf.\ \reref{A59}. 
	\item With the completely similar proof, the analogue of \thref{A37} for rays $D$ contained in the second quadrant 
		holds true. 	
	\end{enumerate}
\end{remark}

\noindent
In any case $\mc L\subseteq R_{\nabla_{\!\mc L}|_D}(\mc H)$. Hence, in order to establish the asserted equality 
$\mc L=\mc R_{\nabla_{\!\mc L}|_D}(\mc H)$ in either \thref{A12} or \thref{A37}, 
it is sufficient to show that $R_{\nabla_{\!\mc L}|_D}(\mc H)\subseteq\mc L$. 
For the proof of this fact, we will employ the same method as used in the proof of \cite[Theorem 26]{debranges:1968}. 
Let us recall the crucial construction:

Let $F\in\Assoc\mc H$ and $H\in\mc H\ominus\mc L$ be given. 
If $G\in\Assoc\mc L$, we may consider the function 
\begin{equation}\label{A56}
	\Phi_{F,H}(w):= \bigg(\frac{F(z)-\frac{F(w)}{G(w)}G(z)}{z-w},H(z)\bigg)_{\mc H}
	\ .
\end{equation}
In the proof of \cite[Theorem 26]{debranges:1968} it was shown that this function does not depend on the particular 
choice of $G\in\Assoc\mc L$, is entire, and of zero exponential type. 

First we treat a particular situation. 

\begin{lemma}{A52}
	Consider $D:= e^{i\pi\beta}[h,\infty)$ where $h>0$ and $\beta\in(0,\frac 12]$. 
	Let $\mc H$ be a de~Branges space, let $\mc L\in\Sub^*\mc H$, and assume that $\dim\mc H/\mc L=1$. 
	Then $\mc L=\mc R_{\nabla_{\!\mc L}|_D}(\mc H)$. 
\end{lemma}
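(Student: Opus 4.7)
The inclusion $\mc L\subseteq\mc R_{\nabla_{\!\mc L}|_D}(\mc H)$ is already noted: by \eqref{A21}, every $F\in\mc L$ obeys $|F(z)|,|F^{\#}(z)|\leq\|F\|_{\mc L}\nabla_{\!\mc L}(z)$, so $\mc L\subseteq R_{\nabla_{\!\mc L}|_D}(\mc H)$. By \thref{A4}, $\mc R_{\nabla_{\!\mc L}|_D}(\mc H)$ is a dB-subspace of $\mc H$ containing $\mc L$; since $\dim(\mc H/\mc L)=1$, only two alternatives remain, and it suffices to produce one element of $\mc H$ that is not majorized on $D$ by a constant multiple of $\nabla_{\!\mc L}$.

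Fix $E_1\in\HB$ with $\mc L=\mc H(E_1)$ and let $H$ generate $\mc H\ominus\mc L$. By the structural description at the end of Section~2.IV one has
\[
	H(z) = e^{i\phi}E_1(z) + e^{-i\phi}E_1^{\#}(z)
\]
for some $\phi\in\bb R$. To prove $H\notin R_{\nabla_{\!\mc L}|_D}(\mc H)$, I would employ the $\Phi$-construction from \eqref{A56}. Given an arbitrary $F\in R_{\nabla_{\!\mc L}|_D}(\mc H)$, setting $G:= E_1\in\Assoc\mc L$ yields $\Phi_{F,H}(w):= (\Psi_{F,w},H)_{\mc H}$, which by the fact cited just before the lemma extends to an entire function of zero exponential type and is independent of $G$. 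Obviously $\Phi_{F,H}\equiv 0$ whenever $F\in\mc L$, since then $\Psi_{F,w}\in\mc L\perp H$; the plan is to propagate this vanishing to all $F\in R_{\nabla_{\!\mc L}|_D}(\mc H)$.

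The method is Phragmén--Lindel\"of in the sector $S=\{0<\arg w<\pi\beta\}$ of opening $\pi\beta\leq\pi/2$, applied to the zero-exponential-type function $\Phi_{F,H}$. Along $D$, the hypothesis $|F(z)|,|F^{\#}(z)|\leq C\nabla_{\!\mc L}(z)$ together with the upper bound $\nabla_{\!\mc L}(z)\leq(2\sqrt{\pi})^{-1}|E_1(z)|/\sqrt{\Im z}$ from \eqref{A42}, inserted into the defining formula of $\Psi_{F,w}$, yields a decaying bound on $\Phi_{F,H}(w)$ as $w\to\infty$ along $D$. Along $\bb R$, $|\Phi_{F,H}(w)|$ is controlled by $\|F\|_{\mc H}\|H\|_{\mc H}$ times the operator norm of the difference-quotient $\rho_{E_1,w}$, which is finite off the zeros of $E_1$. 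An iterated Phragmén--Lindel\"of argument then forces $\Phi_{F,H}$ bounded on $\bb C$, hence constant, and the decay along $D$ pins the constant to $0$. A standard evaluation of $\Phi_{F,H}\equiv 0$ as $w\to\infty$ along the imaginary direction (multiplying by $w$ to extract the leading Cauchy-kernel term) then yields $(F,H)_{\mc H}=0$, so $F\in\mc L$, completing the proof.

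\textbf{Main obstacle.} The core technical difficulty lies in bridging the pointwise majorization of $F$ on the ray $D$ to a quantitative bound on the \emph{integral} quantity $\Phi_{F,H}(w)$, whose definition involves integration over $\bb R$ rather than over $D$. The $1/\sqrt{\Im z}$-factor in \eqref{A42} is just strong enough to make the Phragmén--Lindel\"of estimate go through in a sector of opening $\pi/2$; in the limiting case $\beta=\tfrac12$ (the imaginary-axis ray relevant to \thref{A12}) this estimate is tight, exhibiting the same sharpness phenomenon that underlies \cite[Theorem~26]{debranges:1968} as referenced in the text.
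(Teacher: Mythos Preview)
Your proposal has a fundamental gap: in the codimension-one situation the function $\Phi_{F,H}$ carries no information whatsoever. Indeed, choose $G\in\mc L$ (the paper records that $\Phi_{F,H}$ is independent of the choice of $G\in\Assoc\mc L$). By \reref{A24}, for $G\in\mc H$ one has $\rho_{G,w}\mc H=\dom S_{\mc H}$; and since $\dim\mc H/\mc L=1$, the paragraph after \ntref{A35} together with \leref{A17} gives $\clos\dom S_{\mc H}=\mc L$. Hence $\rho_{G,w}F\in\mc L$ for \emph{every} $F\in\mc H$, so $\Phi_{F,H}(w)=(\rho_{G,w}F,H)_{\mc H}=0$ identically, regardless of whether $F$ is majorized on $D$. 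Your Phragm\'en--Lindel\"of step therefore proves nothing, and the ``standard evaluation'' you invoke to extract $(F,H)_{\mc H}=0$ from $\Phi_{F,H}\equiv 0$ cannot succeed either (and would in any case have to cope with the $G$-term in the limit). This is not an accident: the whole point of the $\Phi$-mechanism is to place $F$ into $\Assoc\mc L$, i.e.\ into $\breve{\mc L}$ by \leref{A17}; when $\breve{\mc L}=\mc H$ this is vacuous, which is exactly why the paper isolates \leref{A52} as a separate step requiring a different idea.

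The paper's argument is short and direct: it suffices to show that the single function $A_1=\tfrac12(E_1+E_1^\#)$ spanning $\mc H\ominus\mc L$ is not majorized by $\nabla_{\!\mc L}$ on $D$. Assuming it were, the elementary algebraic bound
\[
\nabla_{\!\mc L}^2(z)=\frac{|E_1(z)|^2-|E_1^\#(z)|^2}{4\pi\Im z}\leq\frac{|E_1(z)|\,|A_1(z)|}{\pi\Im z}
\]
forces $|A_1(z)|\lesssim|E_1(z)|/\Im z$ on $D$, equivalently $1\lesssim\frac1{\Im z}\big|1-i\tfrac{B_1(z)}{A_1(z)}\big|$. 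But $A_1\notin\mc L$, and by \cite[Theorem~22]{debranges:1968} this is equivalent to $\frac{1}{\Im z}\frac{B_1(z)}{A_1(z)}\to 0$ along the ray $D$, a contradiction. No $\Phi$-function, no Phragm\'en--Lindel\"of; the work is done by the Herglotz-type asymptotics of $B_1/A_1$.
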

\begin{proof}
	Let $E,E_1\in\HB$ be such that $\mc H=\mc H(E)$ and $\mc L=\mc H(E_1)$. 
	Assume for definiteness that the choice of $E$ and $E_1$ is made such that $A:= \frac 12(E+E^\#)\in\mc H$ and 
	$A_1:= \frac 12(E_1+E_1^\#)=A$. Then we have $\mc H=\mc L\oplus\spn\{A_1\}$. 

	Since $\mc L\subseteq R_{\nabla_{\!\mc L}|_D}(\mc H)$, the assertion $\mc R_{\nabla_{\!\mc L}|_D}(\mc H)=\mc L$ 
	is equivalent to $A_1\not\in R_{\nabla_{\!\mc L}|_D}(\mc H)$. 
	Assume on the contrary that $|A_1(z)|\lesssim\nabla_{\!\mc L}(z)$, $z\in D$. We have 
	\[
	\begin{aligned}
		\nabla_{\!\mc L}^2(z)
		& = \frac{|E_1(z)|^2-|E_1^\#(z)|^2}{4\pi\Im z}  
		= \frac{|E_1(z)|+|E_1^\#(z)|}{4\pi}\frac{|E_1(z)|-|E_1^\#(z)|}{\Im z}   \\
		& \leq \frac{|E_1(z)|}{2\pi}\frac{|E_1(z)+E_1^\#(z)|}{\Im z}
		=\frac{|E_1(z)|\cdot|A_1(z)|}{\pi\Im z},
		\qquad z\in\bb C^+
		\,.
	\end{aligned}
	\]
	It follows that $|A_1(z)|\lesssim\frac{|E_1(z)|}{\Im z}$, $z\in D$, and hence 
	\[
		1\lesssim\frac 1{\Im z}\Big|\frac{E_1(z)}{A_1(z)}\Big|=
		\frac 1{\Im z}\Big|1-i\frac{B_1(z)}{A_1(z)}\Big|,\qquad 
		z\in D
		\,.
	\]
	Since $A_1\not\in\mc L$, we know from the proof of \cite[Theorem 22]{debranges:1968} that 
	\[
		\lim_{\substack{|z|\to\infty\\ z\in D}}\frac 1{\Im z}\frac{B_1(z)}{A_1(z)}=0
		\,,
	\]
	and have obtained a contradiction. 
\end{proof}

The general case will be reduced to this special case with the help of the next lemma. 
Recall the notation \eqref{A36}:
\[
	\breve{\mc L}:= \bigcap\big\{\mc K\in\Sub_{\mf d_{\mc L}}\mc H:\,\mc K\supsetneq\mc L\big\}
	\,.
\]

\begin{lemma}{A17}
	Let $\mc H$ be a de Branges space, and let $\mc L\in\Sub^*\mc H$, $\mc L\neq\mc H$. 
	Then 
	\[
		\breve{\mc L}=\mc H\cap\Assoc\mc L
		\,.
	\]
\end{lemma}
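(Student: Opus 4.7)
The plan is to prove the equality $\breve{\mc L} = \mc H \cap \Assoc \mc L$ by establishing the two inclusions separately.

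For $\breve{\mc L} \subseteq \mc H \cap \Assoc \mc L$, the containment $\breve{\mc L} \subseteq \mc H$ is immediate, and when $\breve{\mc L} = \mc L$ the other containment is trivial since $\mc L \subseteq \Assoc \mc L$. I would handle the nontrivial case $\dim(\breve{\mc L}/\mc L) = 1$ by applying the codimension-one remark from the end of Section~2 to the pair $(\breve{\mc L}, \mc L)$, which yields $\mc L = \clos_{\breve{\mc L}}(\dom S_{\breve{\mc L}})$. Then, for $F \in \breve{\mc L}$, $G \in \mc L$, and $w \in \bb C$, the difference quotient
\[
\phi(z) := \frac{F(z) G(w) - F(w) G(z)}{z - w}
\]
lies in $\breve{\mc L}$, because $F$ is associated to the dB-subspace $\breve{\mc L}$ which contains $G$. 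Writing $z/(z-w) = 1 + w/(z-w)$ gives
\[
z\phi(z) = F(z)G(w) - F(w) G(z) + w \phi(z) \in \breve{\mc L},
\]
so $\phi \in \dom S_{\breve{\mc L}} \subseteq \mc L$, which is exactly the statement $F \in \Assoc \mc L$.

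For the converse inclusion $\mc H \cap \Assoc \mc L \subseteq \breve{\mc L}$, I would first note that $(F + F^\#)/2$ and $(F - F^\#)/(2i)$ both lie in $\mc H \cap \Assoc \mc L$ whenever $F$ does, so it suffices to treat the case $F^\# = F$, and moreover $F \notin \mc L$. The key step is to show that $\mc L' := \mc L + \spn\{F\}$ is a dB-subspace of $\mc H$. Closedness and conjugation-invariance follow from $F^\# = F$. For axiom (dB3), I would write $G = h + \alpha F \in \mc L'$ with $G(z_0) = 0$ for some nonreal $z_0$, so that $h(z_0) = -\alpha F(z_0)$. If $F(z_0) \neq 0$, then $F \in \Assoc \mc L$ gives $\phi(z) := (F(z) h(z_0) - F(z_0) h(z))/(z-z_0) \in \mc L$, and a direct computation yields $G(z)/(z-z_0) = -\phi(z)/F(z_0) \in \mc L$. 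If $F(z_0) = 0$, then also $h(z_0) = 0$, so $h/(z-z_0) \in \mc L$; the hypothesis $\mc L \in \Sub^*\mc H$ ensures $\mf d_{\mc L}(z_0) = 0$, so one may pick $G_0 \in \mc L$ with $G_0(z_0) \neq 0$, and then $F \in \Assoc \mc L$ applied with $G_0$ and $w = z_0$ gives $G_0(z_0) F(z)/(z-z_0) \in \mc L$, hence $F/(z-z_0) \in \mc L$ and $G/(z-z_0) \in \mc L$.

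Since $\mc L \subseteq \mc L' \subseteq \mc H$ forces $\mf d_{\mc L'} = \mf d_{\mc L}$, we have $\mc L' \in \Sub_{\mf d_{\mc L}}\mc H$ and $\mc L \subsetneq \mc L'$, so $\breve{\mc L} \subseteq \mc L'$. The Ordering Theorem, combined with $\dim(\mc L'/\mc L) = 1$, shows that every member of the intersection family defining $\breve{\mc L}$ must contain $\mc L'$, whence $\breve{\mc L} = \mc L'$ and therefore $F \in \breve{\mc L}$. The main subtlety of the whole argument is the treatment of nonreal zeros of $F$ in the second inclusion, which is precisely the single place where the hypothesis $\mc L \in \Sub^*\mc H$ enters.
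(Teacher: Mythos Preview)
Your argument is correct, but it is organized quite differently from the paper's proof. The paper sets $\mc M:=\mc H\cap\Assoc\mc L$ and shows in one stroke that $\mc M$ is itself a dB-subspace: invariance under $\#$ and Blaschke division is clear, and closedness is obtained from the continuity of the difference-quotient operator $\rho_{G_0,w}$ together with the fact that $\rho_{G_0,w}(\Assoc\mc L)\subseteq\mc L$. Once $\mc M\in\Sub^*\mc H$ is known, the same observation gives $\dom S_{\mc M}\subseteq\mc L$, hence $\dim(\mc M/\mc L)\le 1$, and a brief case distinction identifies $\mc M$ with $\breve{\mc L}$. Your route trades the closedness argument for a more hands-on construction: for the inclusion $\mc H\cap\Assoc\mc L\subseteq\breve{\mc L}$ you build, for each real $F\in(\mc H\cap\Assoc\mc L)\setminus\mc L$, the one-dimensional extension $\mc L':=\mc L+\spn\{F\}$ and verify (dB3) directly, then invoke the Ordering Theorem and $\dim(\mc L'/\mc L)=1$ to force $\breve{\mc L}=\mc L'$. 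For the reverse inclusion you use the codimension-one structure of $\breve{\mc L}$ over $\mc L$ to push difference quotients into $\dom S_{\breve{\mc L}}\subseteq\mc L$. The paper's approach is more structural and gets the dB-subspace property of the whole set $\mc H\cap\Assoc\mc L$ for free; yours is more elementary in that it never needs to prove closedness of an infinite-dimensional set.

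One small correction to your concluding remark: the step ``pick $G_0\in\mc L$ with $G_0(z_0)\neq 0$'' does not actually require $\mc L\in\Sub^*\mc H$, since $z_0$ is nonreal and $\mf d_{\mc L}$ vanishes off $\bb R$ for any de~Branges space. The hypothesis $\mc L\in\Sub^*\mc H$ is genuinely used elsewhere, namely to ensure $\mf d_{\mc L'}=\mf d_{\mc L}$ (so that $\mc L'$ lies in the chain $\Sub_{\mf d_{\mc L}}\mc H$ defining $\breve{\mc L}$) and to apply the codimension-one description to the pair $(\breve{\mc L},\mc L)$ in your first inclusion.
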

\begin{proof}
	Set $\mc M:= \mc H\cap\Assoc\mc L$. First we show that $\mc M\in\Sub^*\mc H$. 
	Since both $\mc H$ and $\Assoc\mc L$ are invariant with respect to $F\mapsto F^\#$ and 
	with respect to division by Blaschke factors, also $\mc M$ has this property. The crucial point is to show that $\mc M$ is 
	closed in $\mc H$. To this end, let $(F_n)_{n\in\bb N}$ be a sequence of elements of $\mc M$ which converges 
	to some element $F\in\mc H$ in the norm of $\mc H$. Choose $G_0\in\mc L\setminus\{0\}$ and $w\in\bb C^+$, $G_0(w)\neq 0$. 
	Since the difference quotient operator 
	$\rho_{G_0,w}$ is continuous, we have $\rho_{G_0,w}(F_n)\to\rho_{G_0,w}(F)$ in the norm of $\mc H$. 
	However, $F_n\in\Assoc\mc L$ and $G_0\in\mc L$, and therefore $\rho_{G_0,w}(F_n)\in\mc L$. Since $\mc L$ is 
	closed in $\mc H$, we obtain $\rho_{G_0,w}(F)\in\mc L$. The relation 
	\[
		F(z)=(z-w)\rho_{G_0,w}(F)(z)+\frac{F(w)}{G_0(w)}G_0(z)
	\]
	gives $F\in\Assoc\mc L$. We conclude that $\mc M\in\Sub\mc H$. The fact that $\mc L\subseteq\mc M$ yields 
	in particular that $\mc M\in\Sub^*\mc H$. 
	
	Since we chose $G_0\in\mc L$ and $\mc L\subseteq\mc M$, we have $\dom 
        S_{\mc M}=\rho_{G_0,w}(\mc M)$. However, since 
	$\mc M\subseteq\Assoc\mc L$, $\rho_{G_0,w}$ maps $\mc M$ into $\mc L$
	and it follows that $\dom S_{\mc M}\subseteq\mc L$.
	By \cite[Theorem 29]{debranges:1968}, $\dim(\mc M/\clos \dom S_{\mc M}) \leq 1$ 
	and hence $\dim(\mc M/\mc L)\leq 1$. 
	
	In case $\breve{\mc L}=\mc L$, this implies that $\mc M=\mc L$, and hence also the asserted equality $\breve{\mc L}=\mc M$ 
	holds true. Assume that $\breve{\mc L}\supsetneq\mc L$. Then there exist 
	$E_1,\breve E\in\HB$ and a number $l>0$, such that 
	\[
		\mc L=\mc H(E_1),\ \breve{\mc L}=\mc H(\breve E),\quad 
		(\breve A,\breve B)=(A_1,B_1)\begin{pmatrix} 1 & lz \\ 0 & 1\end{pmatrix}.
	\]
	Moreover, with these choices, we have $\breve{\mc L}=\mc L\oplus\spn\{A_1\}$. Since certainly $A_1\in\Assoc\mc L$, 
	we conclude that $\mc M\supsetneq\mc L$. It follows that also in this case $\mc M=\breve{\mc L}$. 
\end{proof}

\begin{proofof}{\thref{A12}}
	Fix $E,E_1\in\HB$ such that $\mc H=\mc H(E)$ and $\mc L=\mc H(E_1)$. Let $F\in R_{\nabla_{\!\mc L}|_D}(\mc H)$ and 
	$H\in\mc H\ominus\mc L$ be given, and consider the function $\Phi_{F,H}$ defined in \eqref{A56}. 
	
	The basic estimate for our argument is obtained by 
	writing out the inner product in the definition of $\Phi_{F,H}$ as an $L^2$-integral, and applying the 
	Schwarz inequality in $L^2(\bb R)$: For $G\in\Assoc\mc L$ and $w\in\bb C\setminus\bb R$ with $G(w)\neq 0$ we have 
	\begin{equation}\label{A47}
	\begin{gathered}
		|\Phi_{F,H}(w)|=\Big|\int_{\bb R}\frac{F(t)-\frac{F(w)}{G(w)}G(t)}{t-w}\qu{H(t)}\cdot\frac{dt}{|E(t)|^2}\Big|
			\\
		\leq \Big(\int_{\bb R}\Big|\frac{F(t)}{E(t)}\Big|^2\frac{dt}{|t-w|^2}\Big)^{\frac 12}\|H\|_{\mc H}+
		\Big|\frac{F(w)}{G(w)}\Big|\Big(\int_{\bb R}\Big|\frac{G(t)}{E(t)}\Big|^2\frac{dt}{|t-w|^2}\Big)^{\frac 12}
		\|H\|_{\mc H}
		\,.
	\end{gathered}
	\end{equation}
	Note that the integrals on the right side of this inequality converge, since $F\in\mc H$ and 
	$G\in\Assoc\mc L\subseteq\Assoc\mc H$. 

	\hspace*{0pt}\\[-2mm]\textit{Step 1: $\Phi_{F,H}$ vanishes identically.} 
	Let us use the estimate \eqref{A47} with $G:= E_1$ and $w:= iy$, $y\geq h$. From \eqref{A42} and 
	$F\in R_{\nabla_{\!\mc L}|_D}(\mc H)$, we obtain $\lim_{y\to\infty}|E_1^{-1}(iy)F(iy)|=0$. By the Bounded Convergence 
	Theorem both integrals in \eqref{A47} tend to $0$ if $y\to\infty$. In total, 
	\[
		\lim_{y\to\infty}|\Phi_{F,H}(iy)|=0
		\,.
	\]
	Similar reasoning applies with $G:= E_1^\#$ and $w:= -iy$, $y\geq h$, in \eqref{A47}. It follows that also 
	$\lim_{y\to-\infty}|\Phi_{F,H}(iy)|=0$. Since $\Phi_{F,H}$ is of zero exponential type, we may apply the 
	Phragm\'{e}n--Lindel{\"o}f Principle in the left and right half-planes separately, and conclude that the function 
	$\Phi_{F,H}$ vanishes identically. 

	\hspace*{0pt}\\[-2mm]\textit{Step 2: End of proof.} 	
	Since $F\in R_{\nabla_{\!\mc L}|_D}(\mc H)$ and $H\in\mc H\ominus\mc L$ were arbitrary, we conclude that 
	\[
		\frac{F(z)-\frac{F(w)}{G(w)}G(z)}{z-w}\in\mc L,\qquad F\in R_{\nabla_{\!\mc L}|_D}(\mc H),\ G\in\Assoc\mc L
		\,.
	\]
	This just says that $R_{\nabla_{\!\mc L}|_D}(\mc H)\subseteq\Assoc\mc L$, and \leref{A17} gives 
	$R_{\nabla_{\!\mc L}|_D}(\mc H)\subseteq\breve{\mc L}$. 
	
	If $\mc L=\breve{\mc L}$, we are already done. Otherwise, applying \leref{A52} with the spaces $\breve{\mc L}$ and 
	$\mc L\in\Sub^*\breve{\mc L}$, and using the already established fact $R_{\nabla_{\!\mc L}|_D}(\mc H)\subseteq\breve{\mc L}$, 
	gives 
	\[
		R_{\nabla_{\!\mc L}|_D}(\mc H)\subseteq R_{\nabla_{\!\mc L}|_D}(\breve{\mc L})=\mc L
		\,.
	\]
\end{proofof}

\begin{proofof}{\thref{A37}}
	Again fix $E,E_1\in\HB$ such that $\mc H=\mc H(E)$ and $\mc L=\mc H(E_1)$. Let $F\in R_{\nabla_{\!\mc L}|_D}(\mc H)$ and 
	$H\in\mc H\ominus\mc L$ be given, and consider the function $\Phi_{F,H}$. 

	\hspace*{0pt}\\[-2mm]\textit{Step 1: $\Phi_{F,H}$ is of order $\rho$ and zero type.} The function $E_1$ belongs to $\HB$ 
	and is of order $\rho=(2-2\beta)^{-1}<1$. Hence $|E_1(iy)|$ is a nondecreasing function of $y>0$. Let $\epsilon>0$ be given, 
	then there exists $C>0$ with 
	\[
		|F(z)|\leq Ce^{\epsilon|z|^\rho},\quad z\in\bb C
		\,.
	\]
	Using \eqref{A47} with $G:= E_1$ and $w:= iy$, $y\geq 1$, we obtain 
	\[
		|\Phi_{F,H}(iy)|\leq\Big(\int_{\bb R}\Big|\frac{F(t)}{E(t)}\Big|^2\frac{dt}{t^2+1}\Big)^{1/2}\cdot\|H\|_{\mc H}
	\]
	\[
		+\frac{Ce^{\epsilon y^\rho}}{|E_1(i)|}\cdot\Big(\int_{\bb R}\Big|\frac{E_1(t)}{E(t)}\Big|^2\frac{dt}{t^2+1}\Big)^{1/2}
		\cdot\|H\|_{\mc H}=O(e^{\epsilon y^\rho}),\quad y\geq 1
		\,.
	\]
	Using $G:= E_1^\#$ instead of $E_1$, gives the analogous estimate $|\Phi_{F,H}(iy)|=O(e^{\epsilon |y|^\rho})$ for $y<-1$. 
	In total, we have $|\Phi_{F,H}(iy)|=O(e^{\epsilon |y|^\rho})$, $y\in\bb R$. Applying the 
	Phragm\'{e}n--Lindel{\"o}f Principle to 
	the left and right half-planes separately, yields that $\Phi_{F,H}$ is of order $\rho$. Since $\epsilon>0$ was arbitrary, $\Phi_{F,H}$ 
	is of zero type with respect to this order. 

	\hspace*{0pt}\\[-2mm]\textit{Step 2: $\Phi_{F,H}$ vanishes identically.} Let $C>0$ be such that 
	$|F(z)|,|F^\#(z)|\leq C\mf m_{\nabla_{\!\mc L}|_D}(z)$, 
	$z\in D$. We obtain from \eqref{A42} and the estimate \eqref{A47} used with $G:= E_1$ and $w\in D$, that 
	\begin{multline*}
		|\Phi_{F,H}(w)|\leq \Big(\int_{\bb R}\Big|\frac{F(t)}{E(t)}\Big|^2\frac{dt}{|t-w|^2}\Big)^{1/2}\cdot\|H\|_{\mc H}+
			\\
		+\frac C{2\sqrt{\pi\Im w}}\Big(\int_{\bb R}\Big|\frac{E_1(t)}{E(t)}\Big|^2\frac{dt}{|t-w|^2}\Big)^{1/2}
		\cdot\|H\|_{\mc H},\quad w\in D
		\,.
	\end{multline*}
	Since $\int_{\bb R}|E^{-1}F(t)|^2(1+t^2)^{-1}\,dt,\int_{\bb R}|E^{-1}E_1(t)|^2(1+t^2)^{-1}\,dt<\infty$, and 
	$\beta\in(0,\frac 12)$, both integrals tend to zero if $|w|\to\infty$ within $D$. We obtain that 
	\[
		\lim_{\substack{|w|\to\infty\\ w\in D}}|\Phi_{F,H}(w)|=0
		\,.
	\]
	The similar argument, applying \eqref{A47} with $G:= E_1^\#$ and $\qu w\in D$, will give 
	$\lim_{\substack{|w|\to\infty\\ w\in D}}|\Phi_{F,H}(w)|=0$. 

	Consider the region $\mc G_+$ which is bounded by the ray $D$, its conjugate ray, and the line segment 
	connecting these rays. Moreover, let $\mc G_-:= \bb C\setminus\qu{\mc G_+}$. 
\begin{center}
\begin{picture}(0,0)%
\includegraphics{figure.pstex}%
\end{picture}%
\setlength{\unitlength}{4144sp}%
\begingroup\makeatletter\ifx\SetFigFont\undefined%
\gdef\SetFigFont#1#2#3#4#5{%
  \reset@font\fontsize{#1}{#2pt}%
  \fontfamily{#3}\fontseries{#4}\fontshape{#5}%
  \selectfont}%
\fi\endgroup%
\begin{picture}(2291,1848)(6838,-2305)
\put(8278,-530){\makebox(0,0)[b]{\smash{{\SetFigFont{10}{6.0}{\rmdefault}{\mddefault}{\updefault}{$D$}%
}}}}
\put(7312,-878){\makebox(0,0)[b]{\smash{{\SetFigFont{10}{6.0}{\rmdefault}{\mddefault}{\updefault}{$\mathcal G_-$}%
}}}}
\put(8394,-1084){\makebox(0,0)[b]{\smash{{\SetFigFont{10}{6.0}{\rmdefault}{\mddefault}{\updefault}{$\mathcal G_+$}%
}}}}
\end{picture}%
\end{center}
	The opening of $\mc G_+$ is $2\pi\beta<\pi<\rho^{-1}\pi$. The opening of $\mc G_-$ is $\pi(2-2\beta)=\rho^{-1}\pi$. 
	Since $\Phi_{F,H}$ is of order $\rho$ zero type, we may apply the Phragm\'{e}n--Lindel{\"o}f Principle to the regions 
	$\mc G_+$ and $\mc G_-$ separately, and conclude that $\Phi_{F,H}$ vanishes identically. 

	\hspace*{0pt}\\[-2mm]\textit{Step 3: End of proof.} We repeat word by word the same reasoning as in Step 2 of the 
	proof of \thref{A12}, and obtain the desired assertion. 
\end{proofof}

\begin{remark}{A39}
	Consider $D:= e^{i\pi\beta}[h,\infty)$ where $h>0$ and $\beta\in(0,\frac 12)$. 
	We do not know at present, and find this an intriguing problem, whether \thref{A37} 
	remains valid without any assumptions on the growth of elements of $\mc H$. 
	It is clear where the argument in the above proof breaks: 
	If we merely know that $\Phi_{F,H}$ is of zero exponential type, its smallness on the boundary of $\mc G_+$ 
	does not imply that $\Phi_{F,H}\equiv 0$. 
	
	On the other hand, we were not able to construct a counterexample. 
	One reason for this will be explained later, cf.\ \reref{A53}. 
\end{remark}

Having in mind \thref{A12} and the formula \eqref{A9} for zero divisors, 
it does not anymore come as a surprise that all dB-subspaces of a given de~Branges space can be realized by 
majorization on $D:= \bb R\cup i[0,\infty)$. 

\begin{corollary}{A13}
	Consider $D:= \bb R\cup i[0,\infty)$. Let $\mc H$ be a de~Branges space, and let $\mc L\in\Sub\mc H$. 
	Then $\mc L=\mc R_{\nabla_{\!\mc L}|_D}(\mc H)$. 
\end{corollary}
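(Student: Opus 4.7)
The plan is to reduce to \thref{A12}, whose only additional hypothesis is $\mc L\in\Sub^*\mc H$. One direction, $\mc L\subseteq\mc R_{\nabla_{\!\mc L}|_D}(\mc H)$, comes for free from \eqref{A21} applied inside $\mc L$ together with (dB2), so only $\mc R_{\nabla_{\!\mc L}|_D}(\mc H)\subseteq\mc L$ needs work.

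First I would pass to the intermediate dB-subspace that carries $\mc L$ in $\Sub^*$: set $\mf d:=\mf d_{\mc L}$, note that $\mf d\geq\mf d_{\mc H}$ since $\mc L\subseteq\mc H$, and invoke \exref{A26} to obtain $\mc H_{\mf d}\in\Sub\mc H$ with $\mf d_{\mc H_{\mf d}}=\mf d$, so that $\mc L\in\Sub^*\mc H_{\mf d}$. The heart of the argument would then be the inclusion
\[
	R_{\nabla_{\!\mc L}|_D}(\mc H)\subseteq\mc H_{\mf d}.
\]
By \reref{A25}\,(iii), $\mf d_{\nabla_{\!\mc L}}=\mf d_{\mc L}$, and the real-line formula in \eqref{A23} together with the continuity and positivity of $\varphi_{E_1}'$ (visible from \eqref{A33}) gives the sharp asymptotics $\nabla_{\!\mc L}(x)\asymp|x-w|^{\mf d_{\mc L}(w)}$ as $x\to w$ along $\bb R$, at every real $w$. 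Hence $|F(x)|\lesssim\nabla_{\!\mc L}(x)$ on $\bb R\subseteq D$ forces $\mf d_F\geq\mf d_{\mc L}$ on $\bb R$, i.e.\ $F\in\mc H_{\mf d}$. Since $\mc H_{\mf d}$ is closed in $\mc H$ and carries the inherited inner product, the resulting identity $R_{\nabla_{\!\mc L}|_D}(\mc H)=R_{\nabla_{\!\mc L}|_D}(\mc H_{\mf d})$ lifts to closures and yields $\mc R_{\nabla_{\!\mc L}|_D}(\mc H)=\mc R_{\nabla_{\!\mc L}|_D}(\mc H_{\mf d})$.

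Finally I would apply \thref{A12} with ambient space $\mc H_{\mf d}$, subspace $\mc L\in\Sub^*\mc H_{\mf d}$, and majorization domain $D':=i[h,\infty)\subseteq D$ (for any $h>0$), which gives $\mc L=\mc R_{\nabla_{\!\mc L}|_{D'}}(\mc H_{\mf d})$. Enlarging the domain of majorization only shrinks $R_{\mf m}$, so the chain
\[
	\mc L\subseteq\mc R_{\nabla_{\!\mc L}|_D}(\mc H)=\mc R_{\nabla_{\!\mc L}|_D}(\mc H_{\mf d})\subseteq\mc R_{\nabla_{\!\mc L}|_{D'}}(\mc H_{\mf d})=\mc L
\]
closes up.

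The only truly non-routine step is the central inclusion into $\mc H_{\mf d}$, i.e.\ the observation that real-line majorization by $\nabla_{\!\mc L}$ already implants the correct divisor condition $\mf d_F\geq\mf d_{\mc L}$. Once this is in hand, the corollary is bookkeeping around \thref{A12}.
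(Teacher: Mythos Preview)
Your proof is correct and follows essentially the same route as the paper's. The only difference is cosmetic: where you re-derive the inclusion $R_{\nabla_{\!\mc L}|_D}(\mc H)\subseteq\mc H_{\mf d_{\mc L}}$ by hand from the local asymptotics of $\nabla_{\!\mc L}$ along $\bb R$, the paper simply quotes $\mf d_{\nabla_{\!\mc L}}=\mf d_{\mc L}$ together with the divisor formula \eqref{A9} from \thref{A4}, and then writes the same chain $\mc L\subseteq\mc R_{\nabla_{\!\mc L}|_D}(\mc H)\subseteq\mc R_{\nabla_{\!\mc L}|_D}(\mc H_{\mf d_{\mc L}})\subseteq\mc R_{\nabla_{\!\mc L}|_{i[1,\infty)}}(\mc H_{\mf d_{\mc L}})=\mc L$.
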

\begin{proof}
	We have $\mf d_{\nabla_{\!\mc L}}=\mf d_{\mc L}$, and hence 
	$\mc R_{\nabla_{\!\mc L}|_D}(\mc H)\subseteq\mc H_{\mf d_{\mc L}}$. Since $\mc L\in\Sub^*\mc H_{\mf d_{\mc L}}$, 
	we may apply \thref{A12}, and obtain 
	\[
		\mc L\subseteq\mc R_{\nabla_{\!\mc L}|_D}(\mc H)\subseteq\mc R_{\nabla_{\!\mc L}|_D}(\mc H_{\mf d_{\mc L}})
		\subseteq\mc R_{\nabla_{\!\mc L}|_{i[1,\infty)}}(\mc H_{\mf d_{\mc L}})
		\subseteq\mc L
		\,.\vspace*{-8mm}
	\]
\end{proof}

Another statement which fits the present context can be proved by a more elementary argument. 
Also this result is not much of a surprise, when 
thinking of \cite[Theorem 3.4]{baranov.woracek:dbmaj} 
(see \thref{A10} below) and the estimate \eqref{A9} for mean type. 

\begin{proposition}{A54}
	Consider $D:= \bb R\cup e^{i\pi\beta}[0,\infty)$ where $\beta\in(0,\frac 12]$. Let $\mc H$ be a de~Branges space, 
	and let $\mc L=\mc H(E_1)\in\Sub\mc H$. Then $\mc L=\mc R_{\mf m_{E_1}|_D}(\mc H)$. 
\end{proposition}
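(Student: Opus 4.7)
The plan is to prove the two inclusions $\mc L \subseteq \mc R_{\mf m_{E_1}|_D}(\mc H)$ and $\mc R_{\mf m_{E_1}|_D}(\mc H) \subseteq \mc L$ separately; admissibility of the majorant $\mf m_{E_1}|_D$ is already recorded in \exref{A58}. The guiding idea, matching the hint in the text, is that the real-axis portion of $D$ produces the $L^2$-integrability needed to place a function in $\mc H(E_1)$, while the ray $D_\beta := e^{i\pi\beta}[0,\infty)$ produces the mean-type control, for free, via the radial growth formula \eqref{A6}.

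For $\mc L \subseteq \mc R_{\mf m_{E_1}|_D}(\mc H)$, I would argue as in \exref{A50}: every reproducing kernel $K_{\mc L}(w,\cdot)$ lies in $R_{\mf m_{E_1}|_D}(\mc H)$. On the real line the explicit formula from \reref{A25}(i), combined with $|E_1(t)| = |E_1^\#(t)|$ for $t \in \bb R$, gives
\[
	|K_{\mc L}(w,t)| \lesssim \frac{|E_1(t)|}{1+|t|} \asymp \mf m_{E_1}(t),
\]
and on $D_\beta \subseteq \bb C^+$ the Hermite--Biehler inequality $|E_1^\#| \leq |E_1|$ supplies the required majorization directly. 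Since the linear span of the reproducing kernels is dense in $\mc L$, passing to the closure yields the claim.

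For the reverse inclusion I would pick $F \in R_{\mf m_{E_1}|_D}(\mc H)$ and verify the characterization of $\mc H(E_1)$ recalled right after \deref{A22}: one has to show that both $F/E_1$ and $F^\#/E_1$ are of bounded type with nonpositive mean type in $\bb C^+$ and square-integrable on $\bb R$. Bounded type in $\bb C^+$ is a standard consequence of $F \in \mc H = \mc H(E)$ combined with $\mc L = \mc H(E_1) \in \Sub\mc H$. The $L^2$-bound is immediate from $\mf m_{E_1}(t) \asymp |E_1(t)|/(1+|t|)$ on $\bb R$, which forces $|F(t)/E_1(t)|, |F^\#(t)/E_1(t)| \leq C/(1+|t|)$. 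For the mean type, note that on $D_\beta \subseteq \bb C^+$ one has $\max\{|E_1|,|E_1^\#|\} = |E_1|$, so the hypothesis $|F|, |F^\#| \leq C\mf m_{E_1}$ along $D_\beta$ translates into
\[
	\Big|\frac{F(z)}{E_1(z)}\Big|,\ \Big|\frac{F^\#(z)}{E_1(z)}\Big| \leq \frac{C}{|z+i|}, \qquad z \in D_\beta.
\]
Invoking \eqref{A6} in the single direction $\theta = \pi\beta$ then forces $\mt(F/E_1) \leq 0$ and $\mt(F^\#/E_1) \leq 0$, completing the verification; hence $F \in \mc H(E_1) = \mc L$.

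The main obstacle is essentially bookkeeping: converting the boundedness of $F/E_1$ on the single ray $D_\beta$ into a mean-type bound throughout $\bb C^+$. Because bounded type of $F/E_1$ is already in hand, \eqref{A6} performs this conversion for free using only the direction $\theta = \pi\beta$, and no sectorial Phragm\'{e}n--Lindel\"of argument inside $\mc G_+$ or $\mc G_-$ is required. This is precisely why the proposition admits an elementary proof, in contrast with the more delicate \thref{A37}.
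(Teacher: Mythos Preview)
Your proof is correct and follows essentially the same route as the paper's: use \exref{A50} for the inclusion $\mc L\subseteq\mc R_{\mf m_{E_1}|_D}(\mc H)$, then for $F\in R_{\mf m_{E_1}|_D}(\mc H)$ verify membership in $\mc H(E_1)$ by checking bounded type (from $F\in\mc H$ and $E_1\in\Assoc\mc H$), $L^2$-integrability (from the real-axis majorization), and nonpositive mean type (from the ray majorization via \eqref{A6}). The paper's argument is phrased more tersely but is the same in substance.
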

\begin{proof}
	In any case, we have $\mc L\subseteq\mc R_{\mf m_{E_1}|_D}(\mc H)$, cf.\ \exref{A50}. Hence, in order to 
	establish the asserted equality, it suffices to show that $R_{\mf m_{E_1}|_D}(\mc H)\subseteq\mc L$. 
	
	Let $F\in R_{\mf m_{E_1}|_D}(\mc H)$ be given, so that $|E_1^{-1}(z)F(z)|\lesssim|z+i|^{-1}$, 
	$z\in\bb R\cup e^{i\pi\beta}[0,\infty)$. Since $F\in\mc H$, the quotient $E_1^{-1}F$ is of bounded type 
	in $\bb C^+$. Since $F$ is majorized by $\mf m_{E_1}$ 
	on the ray $e^{i\pi\beta}[0,\infty)$, 
	we have $\mt E_1^{-1}F\leq 0$. The same arguments apply 
	to $F^\#$. Finally, since $F$ is majorized along the real axis, we have 
	$E_1^{-1}F\in L^2(\bb R)$. It follows that $F\in\mc H(E_1)=\mc L$. 
\end{proof}

\begin{remark}{A32}
	We would like to point out that, although seemingly very similar, \prref{A54} differs in some essential points 
	from the previous results \thref{A12}, \thref{A37} and \coref{A13}. The obvious differences are of course 
	that on the one hand $\mf m_{E_1}|_D\lesssim\nabla_{\!\mc L}|_D$, but on the other hand also in case $\beta\in(0,\frac 12)$ 
	there are no growth assumptions on $\mc H$. 
	
	The following two notices are not so obvious. First, in \prref{A54} 
	we obtain only $\mc L=\clos_{\mc H}R_{\mf m_{E_1}|_D}(\mc H)$, and taking the closure is in general necessary. 
	In the previous statements, we had $\mc L=\mc R_{\nabla_{\!\mc L}|_D}(\mc H)$ and hence actually 
	$\mc L=R_{\nabla_{\!\mc L}|_D}(\mc H)$, cf.\ the chain of inclusions in \exref{A50}. Secondly, the argument used to 
	prove \prref{A54} relies mainly on majorization along $\bb R$; majorization along the ray is only used to 
	control mean type. Contrasting this, the argument used to deduce \coref{A13} from \thref{A12} relies mainly 
	on majorization on the ray; majorization along $\bb R$ is only used to control $\mf d_{\mc L}$. 
\end{remark}

\section{Majorization on sets close to $\bb R$}

In this section, we focus on majorization on sets $D$ which are close to the real axis.
If majorization is permitted only on $\bb R$ itself, we already know precisely which subspaces can be represented. 
Recall: 

\begin{theorem}[\parbox{31mm}{\cite[Theorem 3.4]{baranov.woracek:dbmaj}}]{A10}
	Consider $D:= \bb R$. Let $\mc H$ be a de~Branges space, and let $\mc L=\mc H(E_1)\in\Sub\mc H$. 
	If $\mt_{\mc H}\mc L=0$, then $\mc L=\mc R_{\mf m_{E_1}|_D}(\mc H)$. Conversely, if $\mc L=\mc R_{\mf m}(\mc H)$ 
	with some $\mf m\in\Adm_{\bb R}\mc H$, then $\mt_{\mc H}\mc L=0$. 
\end{theorem}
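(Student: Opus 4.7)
The proof splits into the two implications.

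For the forward direction, assume $\mt_{\mc H}\mc L=0$. The inclusion $\mc L\subseteq\mc R_{\mf m_{E_1}|_{\bb R}}(\mc H)$ is already recorded in \exref{A50}. For the opposite inclusion, pick $F\in R_{\mf m_{E_1}|_{\bb R}}(\mc H)$. The defining bound $|F(t)|,|F^\#(t)|\lesssim|E_1(t)|/|t+i|$ on $\bb R$ immediately gives $F/E_1,F^\#/E_1\in L^2(\bb R)$. Since $F\in\mc H$, both $F/E$ and $F^\#/E$ already belong to $H^2(\bb C^+)$, hence are of bounded type with nonpositive mean type. The hypothesis $\mt_{\mc H}\mc L=0$ rewrites as $\mt(E_1/E)=0$ via the identity $\mt_{\mc H}\mc L=\mt_{\mc H}\nabla_{\!\mc L}$ combined with formula \eqref{A23} for $\nabla_{\!\mc L}$. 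Factoring $F/E_1=(F/E)(E/E_1)$ therefore yields a bounded-type function in $\bb C^+$ with nonpositive mean type, which together with the $L^2$-boundary data puts $F/E_1\in H^2(\bb C^+)$ via the Smirnov characterization. The identical reasoning handles $F^\#/E_1$, and we conclude $F\in\mc H(E_1)=\mc L$.

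For the converse, suppose $\mc L=\mc R_{\mf m}(\mc H)$ with $\mf m\in\Adm_{\bb R}\mc H$, and assume for contradiction that $\alpha:= \mt_{\mc H}\mc L<0$. Fix $\beta\in(0,-\alpha)$. By \reref{A30} applied inside $\mc L$, the set $\{F\in\mc L:\mt_{\mc L}F\leq-\beta\}$ is closed in $\mc H$-norm and strictly proper in $\mc L$ (reproducing kernels $K_{\mc L}(w,\cdot)$ attain $\mt_{\mc L}=0$). Since $R_{\mf m}(\mc H)$ is dense in $\mc L$, there must exist $F\in R_{\mf m}(\mc H)$ with $\mt_{\mc L}F>-\beta$. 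Define $G(z):= e^{-i\beta z}F(z)$; on $\bb R$ we have $|G(t)|=|F(t)|$ and $|G^\#(t)|=|F^\#(t)|$, so the pointwise majorization by $\mf m$ is preserved. The factor $e^{-i\beta z}$ has mean type $\beta$ in $\bb C^+$ while $e^{i\beta z}=(e^{-i\beta z})^\#$ has mean type $-\beta$; using $\mt_{\mc H}F\leq\alpha$ and $\mt(F^\#/E)\leq 0$ one gets $\mt(G/E)\leq\beta+\alpha<0$ and $\mt(G^\#/E)\leq-\beta<0$. Combined with the unchanged $L^2(\bb R)$-boundary values, this puts $G/E$ and $G^\#/E$ into $H^2(\bb C^+)$, so $G\in\mc H$. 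Hence $G\in R_{\mf m}(\mc H)\subseteq\mc L$. But then $\mt(G/E_1)=\beta+\mt_{\mc L}F>\beta-\beta=0$, contradicting $G\in\mc H(E_1)$.

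The main subtlety sits in the converse: the unimodular-on-$\bb R$ twist $G=e^{-i\beta z}F$ preserves every real-line majorant while shifting the vertical mean type by $\beta$. The window $\beta\in(0,-\alpha)$ is precisely calibrated so that $G$ still survives as an element of $\mc H$ (this is the one place where the hypothesis $\alpha<0$ actually enters), while $\mt_{\mc L}F>-\beta$, secured by the closedness-and-density argument, pushes $G$ out of $\mc L$. This mechanism is the structural reason why majorization along $\bb R$ alone cannot detect any drop in mean type.
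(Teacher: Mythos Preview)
Your proof is correct. Note that the paper does not prove this theorem; it is quoted from \cite[Theorem 3.4]{baranov.woracek:dbmaj} and stated without argument.

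That said, both directions of your proof have close relatives inside the present paper, and it is worth pointing out the parallels. Your forward direction is exactly the argument the paper uses for \prref{A54}, specialized to $D=\bb R$: there the nonpositivity of $\mt(F/E_1)$ is obtained from majorization on the ray $e^{i\pi\beta}[0,\infty)$, whereas here it comes from the hypothesis $\mt_{\mc H}\mc L=0$ via $\mt(E/E_1)=0$ (a fact the paper also uses freely, e.g.\ in the proofs of \thref{A15} and \thref{A18}). Your converse is a clean special case of the mechanism behind \thref{A11}: that proof multiplies $F$ by $f(z)e^{i(a+\varepsilon)z}$ with a Beurling--Malliavin function $f\in\PW_\varepsilon$ decaying like $e^{-\tilde\psi}$, because the domain $D$ may stray into $\bb C^+$ and one has to kill the growth of $e^{i(a+\varepsilon)z}$ off the real axis. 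For $D=\bb R$ this complication disappears, and your bare twist $G=e^{-i\beta z}F$ suffices. Your additional use of the closedness of $\{\mt_{\mc L}\,\cdot\,\leq-\beta\}$ to locate an $F\in R_{\mf m}(\mc H)$ with $\mt_{\mc L}F>-\beta$ is a nice touch that lets you derive a direct contradiction rather than an $\varepsilon$-argument.
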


In view of this result, it is not surprising that we cannot capture mean type restrictions 
if $D$ is too close to the real line. 
The following statement makes this quantitatively more precise. 

\begin{theorem}{A11}
	Let $\mc H$ be a de~Branges space, and let $\psi:\bb R\to\bb R$ be a positive and even function which is increasing 
	on $[0,\infty)$ and satisfies 
	\[
		\int_0^\infty\frac{\psi(t)}{t^2+1}\,dt<\infty
		\,.
	\]
	If $D\subseteq\{z\in\bb C^+\cup\bb R:\,\Im z\leq \psi(\Re z)\}$ and $\mf m\in\Adm_D\mc H$, then 
	$\mt_{\mc H}\mc R_{\mf m}(\mc H)=0$. 
\end{theorem}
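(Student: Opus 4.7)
I argue by contradiction: suppose $\mt_{\mc H}\mc R_{\mf m}(\mc H)=-\gamma$ for some $\gamma>0$. By \reref{A30}, $\mt_{\mc H}R_{\mf m}(\mc H)=-\gamma$, so for every $\varepsilon>0$ there exists $F\in R_{\mf m}(\mc H)$ with $\mt_{\mc H}F\in(-\gamma-\varepsilon,-\gamma]$. The plan is to multiply such an $F$ by a carefully chosen entire function $g$ of positive mean type so that $\widetilde F:=gF$ still lies in $R_{\mf m}(\mc H)$ but satisfies $\mt_{\mc H}\widetilde F>-\gamma$, yielding the contradiction. The construction is modelled on the proof of \thref{A10}: for $D=\bb R$ one takes $g(z)=e^{-i\gamma z}$, for which $|g|\equiv 1$ on $\bb R$ already preserves the majorization; for a thicker $D$ this multiplier must be replaced by an entire function that still mimics the imaginary-axis growth $e^{\gamma\Im z}$ but stays bounded on $D$.

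Such a multiplier is produced by the Beurling--Malliavin small multiplier theorem. Applied to the weight $\omega(t):=e^{4\gamma\psi(t)}$, whose log-integrability $4\gamma\int\psi(t)/(1+t^2)\,dt<\infty$ is precisely the hypothesis, it yields a nonzero entire function $g$ of exponential type at most $\gamma$ with $|g(t)|\leq e^{-4\gamma\psi(t)}$ on $\bb R$, and one may arrange $g^\#=g$ (using evenness of $\psi$) and $\mt g>0$. The crucial pointwise estimate is $|g(z)|\leq 1$ on $D$. Since $g$ is of exponential type $\leq\gamma$ and bounded on $\bb R$, the inequality $\log|g(z)|\leq P[\log|g|](z)+\gamma\Im z$ holds in $\bb C^+$, where $P$ denotes the Poisson integral; substituting $|g(t)|\leq e^{-4\gamma\psi(t)}$ gives $\log|g(z)|\leq-4\gamma\,P[\psi](z)+\gamma\Im z$. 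For $z=x+iy\in D$ with $x\geq 0$, monotonicity of $\psi$ on $[0,\infty)$ together with $y\leq\psi(x)$ yields
\[
P[\psi](x+iy)\geq\frac{y}{\pi}\int_x^{x+y}\frac{\psi(x)}{(t-x)^2+y^2}\,dt=\frac{\psi(x)}{4}\geq\frac{y}{4},
\]
whence $\log|g(z)|\leq-\gamma y+\gamma y=0$; the case $x<0$ follows by evenness of $\psi$, and the corresponding bound for $g^\#$ on $D$ follows from $g^\#=g$.

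Equipped with $g$, choose $F\in R_{\mf m}(\mc H)$ with $\mt_{\mc H}F>-\gamma-\mt g$ (taking $\varepsilon=\mt g$), and set $\widetilde F:=gF$. The function $\widetilde F$ is entire with $\widetilde F^\#=gF^\#$. Since $|g|\leq 1$ on $\bb R$, both $\widetilde F/E$ and $\widetilde F^\#/E$ lie in $L^2(\bb R)$, are of bounded type in $\bb C^+$, and have mean type $\mt g+\mt(F/E)\leq\gamma-\gamma=0$ and $\mt g+\mt(F^\#/E)\leq 0$ respectively, using that $F^\#\in R_{\mf m}(\mc H)$ as well; hence $\widetilde F\in\mc H$. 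On $D$, $|\widetilde F|\leq|g|\,\mf m\leq\mf m$ and likewise for $\widetilde F^\#$, so $\widetilde F\in R_{\mf m}(\mc H)$. Finally, $\mt_{\mc H}\widetilde F=\mt g+\mt_{\mc H}F>\mt g+(-\gamma-\mt g)=-\gamma$, contradicting $\mt_{\mc H}R_{\mf m}(\mc H)=-\gamma$. The main obstacle is constructing $g$: its existence rests on the Beurling--Malliavin multiplier theorem, and the pointwise bound $|g|\leq 1$ on $D$ depends on the Poisson-kernel estimate above; together these convert the purely integral hypothesis on $\psi$ into effective pointwise control of $g$ on $D$.
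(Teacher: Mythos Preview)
Your argument follows essentially the same strategy as the paper's: construct a Beurling--Malliavin multiplier that is small on $D$ and use it to boost the mean type of an element of $R_{\mf m}(\mc H)$ while keeping it in $R_{\mf m}(\mc H)$. The paper packages this directly rather than by contradiction, writing the multiplier as $f(z)e^{i(a+\varepsilon)z}$ with $f\in\PW_\varepsilon$ satisfying $|f(x)|\le e^{-\tilde\psi(x)}$ for an auxiliary $\tilde\psi$ with $\psi=o(\tilde\psi)$; the slack in $\tilde\psi$ absorbs the unspecified constant $C$ in the Poisson estimate $|f(z)|\le\exp(\varepsilon|\Im z|-C\tilde\psi(|z|))$. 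Your version avoids the auxiliary $\tilde\psi$ by taking the sharper bound $|g(t)|\le e^{-4\gamma\psi(t)}$ and computing the Poisson integral explicitly, which is a nice simplification.

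One point you assert without justification deserves a word: that one may arrange $\mt g>0$. The Beurling--Malliavin theorem only gives a nonzero $g$ of type $\le\gamma$, and in principle $\mt g$ could be $\le 0$. The cleanest fix is to take $f\in\PW_{\gamma/2}$ with $|f(t)|\le e^{-2\gamma\psi(t)}$ and set $g:=ff^\#$; then $g^\#=g$, $|g(t)|\le e^{-4\gamma\psi(t)}$, and $g$ is nonconstant (since $|f|^2$ constant on $\bb R$ together with $f\in L^2$ forces $f=0$). A real entire function of exponential type that is bounded on $\bb R$ and nonconstant has $\mt g>0$, for otherwise $\mt g=\mt g^\#\le 0$ forces $g$ bounded in both half-planes and hence constant by Liouville. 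With this in hand your argument goes through.
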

\begin{proof}
	Let $E$ be such that $\mc H=\mc H(E)$, pick $F\in R_{\mf m}(\mc H)$, $\|F\|_{\mc H}=1$, and set $a:= \mt_{\mc H}F$. 
	If $a=0$, we are already done, hence assume that $a<0$. Let $\varepsilon\in(0,-\frac a2)$, and let 
	$\tilde\psi:\bb R\to\bb R$ be a positive and even function which is increasing on $[0,\infty)$, satisfies 
	$\psi(x)=o(\tilde \psi(x))$, $x\to+\infty$, and is such that still 
	\[
		\int_0^\infty \frac{\tilde\psi(t)}{t^2+1}\,dt < \infty
		\,.
	\]
	It is a well-known consequence of the Beurling--Malliavin Theorem, that there exists 
	a nonzero function $f\in\PW_\varepsilon$ with 
	\[
		|f(x)|\leq\exp(-\tilde\psi(x)),\qquad x\in \bb R
		\,,
	\]
	see e.g.\ \cite[p.276]{havin.joericke:1994} or \cite[p.159]{koosis:1992}. By the Phragm\'{e}n--Lindel{\"o}f Principle the 
	functions $e^{i\varepsilon z}f$ and $e^{i\varepsilon z}f^\#$ are bounded by $1$ throughout $\bb C^+$. 
	
	Since $\tilde\psi$ is even and increasing on $[0,\infty)$, we can estimate the Poisson integral 
	for $\log|f|$ to obtain 
	\[
	     |f(z)|\leq\exp\big(\varepsilon |\Im z|-C\tilde\psi(|z|)\big), \qquad z \in \bb C
	     \,,
	\]
	where the constant $C>0$ does not depend $z$. 
	
	Consider the function 
	\[
		G(z):= F(z)f(z)e^{i(a+\varepsilon)z}
		\,.
	\]
	Then $|G(x)|\le |F(x)|$, $x\in \bb R$, and 
	\begin{align}
		&\mt\frac{G}{E}=\mt\frac{F}{E}+\mt f-(a+\varepsilon)=\mt f-\varepsilon\leq 0,
			\label{A34} \\ \nonumber
		&\mt\frac{G^\#}{E}=\mt\frac{G^\#}{E}+\mt f^\#+(a+\varepsilon)\leq a+2\varepsilon <0
			\,.
	\end{align}
	We conclude that $G\in \mc H$. Next we show that $G\in R_{\mf m}(\mc H)$. Indeed, if $z\in D$ or $\qu z\in D$, $z=x+iy$, 
	then $|y|\leq\psi(x)$. Since $\psi(x)=o(\tilde\psi(x))$, $x\to+\infty$, we may estimate 
	\begin{multline*}
		|G(z)|\leq |F(z)|e^{(-a+2\varepsilon) |y|-C\tilde \psi(|z|)}\leq
			\mf m(z)e^{(-a+2\varepsilon)\psi(x)-C\tilde \psi(x)}\lesssim \mf m(z)\,,\\
		z\in D\text{ or }\qu z\in D\,.
	\end{multline*}
	Finally, by the Phragm\'{e}n--Lindel{\"o}f Principle, 
	$\mt f\geq-\varepsilon$. Using \eqref{A34}, it follows that 
	$\mt E^{-1}G\geq-2\varepsilon$ and so 
	$\mt_{\mc H}\mc R_{\mf m}(\mc H)\geq-2\varepsilon$. 
	Since $\varepsilon\in(0,-\frac a2)$ was arbitrary, 
	we conclude that $\mt_{\mc H}\mc R_{\mf m}(\mc H)=0$. 
\end{proof}

\thref{A11} can, of course, 
be viewed as a necessary condition for representability of a dB-subspace $\mc L$ as $\mc R_{\mf m}(\mc H)$, 
where $\mf m$ is defined on some set $D$ close to $\bb R$. 
We turn to the discussion of sufficient conditions for representability. To start with, let us discuss representability 
with the standard majorant $\nabla_{\!\mc L}$ restricted to $\bb R$. 

\begin{theorem}{A15}
	Consider $D:= \bb R$. Let $\mc H$ be a de~Branges space, and let $\mc L=\mc H(E_1)\in\Sub\mc H$. 
	If $\mt_{\mc H}\mc L=0$ and $\sup_{x\in\bb R}\varphi_{E_1}'(x)<\infty$, then 
	\[
		\mc L\subseteq\mc R_{\nabla_{\!\mc L}|_D}(\mc H)\subseteq\breve{\mc L}
		\,. 
	\]
	Thereby $\mc R_{\nabla_{\!\mc L}|_D}(\mc H)\neq\mc L$, if and only if $\breve{\mc L}\supsetneq\mc L$ and 
	for some $\varphi_0\in \bb R$ we have 
	\[
		\big|\cos(\varphi_{E_1}(x)-\varphi_0)\big|
                \lesssim \big(\varphi_{E_1}'(x)\big)^{1/2},\qquad x\in\bb R
		\,.
	\]
\end{theorem}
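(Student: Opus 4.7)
I would prove the two inclusions separately: the lower inclusion $\mc L\subseteq\mc R_{\nabla_{\!\mc L}|_D}(\mc H)$ is immediate from \exref{A8}, and the substance is the upper inclusion $\mc R_{\nabla_{\!\mc L}|_D}(\mc H)\subseteq\breve{\mc L}$. For the latter I would adapt the method used in \thref{A12}: fix $F\in R_{\nabla_{\!\mc L}|_D}(\mc H)$ and $H\in\mc H\ominus\mc L$, consider the entire function $\Phi_{F,H}$ of \eqref{A56} (which is of zero exponential type), and show $\Phi_{F,H}\equiv 0$. As in Step~2 of the proof of \thref{A12}, this forces $F\in\Assoc\mc L$, and then $F\in\mc H\cap\Assoc\mc L=\breve{\mc L}$ by \leref{A17}; since $\breve{\mc L}$ is closed in $\mc H$, passing to the closure completes the upper inclusion.

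The key new point is to convert the real-line bound into control on the imaginary axis. From \eqref{A23} and $\sup_x\varphi_{E_1}'(x)<\infty$ one has $\nabla_{\!\mc L}(x)\lesssim|E_1(x)|$ on $\bb R$, so $F/E_1$ and $F^\#/E_1$ are bounded on $\bb R$. Factor $F/E_1=(F/E)\cdot(E/E_1)$: the first factor lies in $H^2$, and the second is of bounded type with $\mt(E/E_1)=0$ (equivalent to $\mt_{\mc H}\mc L=0$). Granting $E/E_1\in\mc N_+(\bb C^+)$ --- the main technical obstacle, but the natural analytic consequence of $\mc L\in\Sub^*\mc H$ --- we conclude $F/E_1\in\mc N_+$, and Smirnov's maximum principle promotes the $\bb R$-boundedness to $|F(iy)|\lesssim|E_1(iy)|$ throughout $\bb C^+$, with the mirror bound $|F(-iy)|\lesssim|E_1^\#(-iy)|$ in $\bb C^-$. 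Plugging these into \eqref{A47} with $G:= E_1$ and $w:= iy$, the first summand vanishes as $y\to\infty$ by dominated convergence (using $F/E\in H^2$), and in the second $|F(iy)/E_1(iy)|$ stays bounded while $\int_{\bb R}|E_1/E|^2(t^2+y^2)^{-1}\,dt\to 0$ (the integrand is dominated by the integrable function $|E_1/E|^2(1+t^2)^{-1}$, since $E_1\in\Assoc\mc H$). Running the symmetric argument with $G:= E_1^\#$ for $y\to-\infty$ and invoking the Phragm\'{e}n--Lindel{\"o}f Principle in $\bb C^+$ and $\bb C^-$ separately yields $\Phi_{F,H}\equiv 0$.

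For the equality criterion, if $\breve{\mc L}=\mc L$ there is nothing to check. Otherwise the structural description at the end of Section~2, applied to the codimension-one inclusion $\mc L\subseteq\breve{\mc L}$, produces $\phi\in\bb R$ with $\breve{\mc L}=\mc L\oplus\spn\{f_0\}$, where $f_0:= A_1\cos\phi+B_1\sin\phi$. Using $A_1(x)=|E_1(x)|\cos\varphi_{E_1}(x)$ and $B_1(x)=|E_1(x)|\sin\varphi_{E_1}(x)$ on $\bb R$ one computes $|f_0(x)|=|E_1(x)|\cdot|\cos(\varphi_{E_1}(x)-\phi)|$, so with $\varphi_0:=\phi$ the stated cosine bound is exactly $|f_0|\lesssim\nabla_{\!\mc L}$ on $\bb R$, equivalently $f_0\in R_{\nabla_{\!\mc L}|_D}(\mc H)$. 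If the cosine bound holds, then $f_0\in\mc R_{\nabla_{\!\mc L}|_D}(\mc H)\setminus\mc L$ combined with the upper inclusion forces $\mc R_{\nabla_{\!\mc L}|_D}(\mc H)=\breve{\mc L}$. If it fails, then for any $F\in R_{\nabla_{\!\mc L}|_D}(\mc H)\subseteq\breve{\mc L}$ decompose $F=F_1+cf_0$ with $F_1\in\mc L$, and majorise $F_1$ on $\bb R$ via \eqref{A21}; the resulting bound $|cf_0(x)|\lesssim\nabla_{\!\mc L}(x)$ forces $c=0$, so $\mc R_{\nabla_{\!\mc L}|_D}(\mc H)=\mc L$.
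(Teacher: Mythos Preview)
Your argument is correct, and the equality criterion is handled essentially as in the paper. However, your route to the upper inclusion is considerably more roundabout than the paper's, and the point you flag as ``the main technical obstacle'' is in fact no obstacle at all.

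\textbf{On the $\mc N_+$ step.} In the de~Branges setting everything is entire, so $F/E_1$ extends analytically across $\bb R$. For a function of bounded type with this property the singular inner factors in the Nevanlinna factorisation can only be exponentials $e^{iaz}$; consequently membership in $\mc N_+$ is \emph{equivalent} to $\mt\leq 0$. The paper therefore disposes of this in one line: since $\mt_{\mc H}\mc L=0$ one has $\mt(F/E_1)=\mt(F/E)\leq 0$, whence $F/E_1\in\mc N_+$. There is no need to factor through $E/E_1$ or to invoke $\mc L\in\Sub^*\mc H$ for this purpose.

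\textbf{On the detour through $\Phi_{F,H}$.} Once Smirnov's maximum principle gives $|F(z)|\lesssim|E_1(z)|$ throughout $\bb C^+$ (and, by the same argument applied to $F^\#$, also $|F^\#(z)|\lesssim|E_1(z)|$), you already have $F/E_1,\,F^\#/E_1$ of bounded type with nonpositive mean type. That is exactly the bounded-type characterisation of $\Assoc\mc H(E_1)$, so $F\in\Assoc\mc L$ \emph{immediately}, and \leref{A17} finishes. The paper does precisely this. Your route instead feeds the Smirnov bound only on $i\bb R$ into the estimate \eqref{A47} to kill $\Phi_{F,H}$ along the imaginary axis, then invokes Phragm\'en--Lindel\"of, then concludes $F\in\Assoc\mc L$. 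This works, but it is a genuine detour: the machinery of $\Phi_{F,H}$ was designed for situations (Theorems \ref{THA12}, \ref{THA37}) where one has growth information only along a single ray and cannot get a full half-plane bound; here you already have the half-plane bound and do not need it.
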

\begin{proof}
	Let $F\in R_{\nabla_{\!\mc L}|_{\bb R}}(\mc H)$ be given. Since $\mt_{\mc H}\mc L=0$, 
	we have $\mt E_1^{-1}F=\mt E^{-1}F\leq 0$. 
	Thus $E_1^{-1}F\in\mc N_+$. Due to our present assumption on $\varphi_{E_1}'$, we have 
	\[
		|F(x)|\lesssim\nabla_{\!\mc L}(x)=\pi^{-1/2} |E_1(x)| (\varphi_{E_1}'(x))^{1/2} \lesssim |E_1(x)|,
		\qquad x\in\bb R
		\,.
	\]
	The Smirnov Maximum Principle implies that $|F(z)|\lesssim|E_1(z)|$ throughout the half-plane $\bb C^+$. 
	It follows that $F\in\Assoc\mc L$. Using \leref{A17}, we obtain $\mc R_{\nabla_{\!\mc L}|_{\bb R}}(\mc H)\subseteq\breve{\mc L}$. 
	
	By what we just proved, in case $\breve{\mc L}=\mc L$, certainly also $\mc R_{\nabla_{\!\mc L}|_{\bb R}}(\mc H)=\mc L$. 	
	Hence, in order to characterize the situation that $\mc R_{\nabla_{\!\mc L}|_{\bb R}}(\mc H)\neq\mc L$, 
	we may assume that $\breve{\mc L}\neq\mc L$. 
	
	Let $\varphi_0\in\bb R$ be such that $\breve{\mc L}=\mc L\oplus\spn\{S_1\}$, with 
	$S_1:= e^{i\varphi_0}E_1+e^{-i\varphi_0}E_1^\#$. We have 
	\[
		S_1(x)=2|E_1(x)|\cos\big(\varphi_{E_1}(x)-\varphi_0\big)
		\,,
	\]
	and hence $S_1\in R_{\nabla_{\!\mc L}|_{\bb R}}(\mc H)$ if and only if 
	$|\cos(\varphi_{E_1}(x)-\varphi_0)| 
        \lesssim \big( \varphi_{E_1}'(x)\big)^{1/2}$, $x\in\bb R$. Since 
	$\mc R_{\nabla_{\!\mc L}|_{\bb R}}(\mc H)\neq\mc L$ is equivalent to $S_1\in R_{\nabla_{\!\mc L}|_{\bb R}}(\mc H)$, 
	the assertion follows. 
\end{proof}

\begin{corollary}{A19}
	Consider $D:= \bb R$. Let $\mc H=\mc H(E)$ be a de~Branges space, 
	assume that $\sup_{x\in\bb R}\varphi_E'(x)<\infty$ and $\breve{\mc K}=\mc K$, $\mc K\in\Sub^*\mc H$, 
	and let $\mc L\in\Sub^*\mc H$. 
	If $\mt_{\mc H}\mc L=0$, then $\mc L=\mc R_{\nabla_{\!\mc L}|_{\bb R}}(\mc H)$. 
\end{corollary}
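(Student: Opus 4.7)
The plan is to adapt the proof of \thref{A15} by invoking the hypothesis $\sup_{x\in\bb R}\varphi_E'(x)<\infty$ in place of the corresponding bound on $\varphi_{E_1}'$ (which we do not have), together with the projection inequality $\nabla_{\!\mc L}\leq\nabla_{\!\mc H}$. Fix $E,E_1\in\HB$ with $\mc H=\mc H(E)$ and $\mc L=\mc H(E_1)$. The inclusion $\mc L\subseteq\mc R_{\nabla_{\!\mc L}|_{\bb R}}(\mc H)$ is automatic by \eqref{A21}, so it suffices to prove the reverse inclusion; let $F\in R_{\nabla_{\!\mc L}|_{\bb R}}(\mc H)$ be arbitrary.

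From \eqref{A23} and $\sup_{x\in\bb R}\varphi_E'(x)<\infty$ we obtain $\nabla_{\!\mc H}(x)\lesssim|E(x)|$ on $\bb R$. Combined with $\nabla_{\!\mc L}\leq\nabla_{\!\mc H}$ (orthogonal projection of reproducing kernels), this yields $|F(x)|,|F^\#(x)|\lesssim|E(x)|$ on $\bb R$. Since $F/E,F^\#/E\in H^2\subseteq\mc N_+(\bb C^+)$, the Smirnov maximum principle extends the boundedness to all of $\bb C^+$, so $F/E,F^\#/E\in H^\infty$; in particular they are of bounded type with $\mt\leq 0$.

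The crucial step is to show $F\in\Assoc\mc L$, which amounts to $F/E_1$ and $F^\#/E_1$ being of bounded type in $\bb C^+$ with $\mt\leq 0$. For this I need the identity $\mt(E/E_1)=0$. For any nonzero $G\in\mc L$ both $G/E$ and $G/E_1$ lie in $H^2\subseteq\mc N_+$, so $E/E_1=(G/E_1)/(G/E)\in\mc N(\bb C^+)$ and the additive rule $\mt(G/E_1)=\mt(G/E)+\mt(E/E_1)$ holds. Taking the supremum over $G\in\mc L$ and using \reref{A25}(iv) to identify $\sup_G\mt(G/E)=\mt_{\mc H}\mc L=0$ (the hypothesis) and $\sup_G\mt(G/E_1)=\mt_{\mc L}\mc L=0$ (always valid, attained on reproducing kernels of $\mc L$), one finds $\mt(E/E_1)=0$. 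Hence $F/E_1=(F/E)(E/E_1)$ is of bounded type with $\mt\leq 0$, and similarly for $F^\#/E_1$, so $F\in\Assoc\mc L$. By \leref{A17} then $F\in\mc H\cap\Assoc\mc L=\breve{\mc L}$, and the hypothesis $\breve{\mc K}=\mc K$ applied to $\mc K=\mc L$ gives $F\in\mc L$. The delicate point is the mean type identity $\mt(E/E_1)=0$; once that is in hand, the remaining argument parallels the end of the proof of \thref{A15}.
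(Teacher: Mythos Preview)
Your proof has a genuine gap in the step where you conclude $F\in\Assoc\mc L$. The characterization you invoke --- that $F\in\Assoc\mc H(E_1)$ amounts to $F/E_1$ and $F^\#/E_1$ being of bounded type in $\bb C^+$ with nonpositive mean type --- is incomplete: one also needs the integrability $\int_{\bb R}|F(t)|^2\big((1+t^2)|E_1(t)|^2\big)^{-1}\,dt<\infty$. This is precisely what forces the difference quotients in \reref{A24} to land in $\mc H(E_1)$. In the proof of \thref{A15} this integrability comes for free because there $F/E_1$ is shown to be \emph{bounded} on $\bb C^+$; you only obtain bounded type, which is strictly weaker and does not control the boundary $L^2$-norm.

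To recover the missing integrability from $|F(t)|\lesssim\nabla_{\!\mc L}(t)=\pi^{-1/2}|E_1(t)|\sqrt{\varphi_{E_1}'(t)}$ one is led to $\int_{\bb R}\varphi_{E_1}'(t)(1+t^2)^{-1}\,dt<\infty$, and the natural way to ensure this is to bound $\varphi_{E_1}'$. But $\sup_{\bb R}\varphi_{E_1}'<\infty$ is exactly what the paper obtains, via the monotonicity $\varphi_{E_1}'\leq\varphi_E'$ from \cite[Problem~154]{debranges:1968}, and then \thref{A15} applies directly. So the attempt to bypass the monotonicity does not succeed: once the gap is repaired, the argument reduces to the paper's own route. (Incidentally, the parenthetical remark that ``we do not have'' $\sup_{\bb R}\varphi_{E_1}'<\infty$ is mistaken --- under the hypotheses of the corollary, this bound does hold, for exactly the reason just stated.)
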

\begin{proof}
	By \cite[Problem 154]{debranges:1968}, the function $\varphi_{\mc L}'$, 
	$\mc L\in\Sub^*\mc H$, depends monotonically on $\mc L$. By this we mean that 
	\[
		\varphi_{\mc L}'(x)\leq\varphi_{\mc K}'(x),\quad x\in\bb R,\quad
		\text{ whenever }\quad \mc L,\, \mc K\in\Sub^*\mc H, \ 
		\mc L\subseteq\mc K
		\,.
	\]
	The present assertion is now an immediate consequence of \thref{A15}. 
\end{proof}

Already the following simple remark shows that some additional 
conditions are really needed in order to obtain 
$\mc L=\mc R_{\nabla_{\!\mc L}|_{\bb R}}(\mc H)$.

\begin{remark}{A51}
	Let $\mc H$ be a de~Branges space, let $\mc L\in\Sub\mc H$, and write $\mc L=\mc H(E_1)$. If 
	$\inf_{x\in\bb R}\varphi_{E_1}'(x)>0$, then $\breve{\mc L}\subseteq\mc R_{\nabla_{\!\mc L}|_{\bb R}}(\mc H)$. 
	This follows since the condition $\inf_{x\in\bb R}\varphi_{E_1}'(x)>0$ 
        certainly implies that every linear combination $\lambda E_1+\mu E_1^\#$ 
        is majorized by $\nabla_{\!\mc L}$ on the real axis. 
\end{remark}

However, the situation can be really bad, if $\varphi'_{E_1}$ grows fast.

\begin{example}{A45}
	We are going to construct spaces $\mc H$ and $\mc L\in\Sub^*\mc H$, such that $\dim\mc H/\mc L=\infty$ and 
	$R_{\nabla_{\!\mc L}|_{\bb R}}(\mc H)=\mc H$. 
	
	As we see from the proof of \thref{A15}, 
	it will be a good start to construct the space $\mc L=\mc H(E_1)$ such that $\varphi_{E_1}'(x)$ grows very fast. 
	Let $z_n:= (\sign n)\log |n|+i|n|^{-1}\log^{-2}|n|$, $n\in\bb Z$, 
	$|n|\ge 2$. Since $\sum_{n=1}^\infty|\Im\frac 1{z_n}|<\infty$, 
	there exists a function $E_1\in\HB$ with $E_1(-z)=E_1^\#(z)$ and $\mt E_1^{-1}E_1^\#=0$, 
	which has the points $z_n$ as simple zeros and does not vanish at any other point. For this function, we have 
	\[
		\varphi_{E_1}'(x)=\sum_{\substack{n\in\bb Z\\ |n|\ge 2}} \frac 1{n\log^2 n \Big[(x-\log n)^2+n^{-2}\log^{-4}n\Big]}
		\,.
	\]
	Let $x\geq\ln 2$ be given, and choose $k\in\bb N$, $k\ge 2$, such that $x\in[\log k,\log(k+1))$. 
	Then we have 
	\[
	\begin{aligned}
		k\log^2k\bigg[(x-\log k)^2+\frac 1{k^2\log^4k}\bigg] 
		& \leq 
		k\log^2k\bigg[\big(\underbrace{\log(k+1)-\log k}_{=\log(1+\frac 1k)
		\leq\frac 1k}\big)^2+\frac 1{k^2\log^4k}\bigg] \\
		& \leq\frac{\log^2k}k\bigg[1+\frac 1{\log^4k}\bigg]\leq\frac{\log^2k}k\bigg[1+\frac 1{\log^42}\bigg]
		\,.
	\end{aligned}
	\]
	It follows that 
	\begin{equation}\label{A46}
		\varphi_{E_1}'(x)\geq \big[1+\frac 1{\log^42}\big]^{-1}\frac k{\log^2k}\geq 
		\frac{e^x-1}{x^2}\big[1+\frac 1{\log^42}\big]^{-1},\qquad x\geq\ln 2
		\,.
	\end{equation}
	Next, choose an entire matrix function $W(z)=(w_{ij}(z))_{i,j=1,2}$, $W\neq I$, of zero exponential type, 
	with $w_{ij}^\#=w_{ij}$, $W(0)=I$, $\det W(z)=1$, such that the kernel $K_W(w,z)$ is positive semidefinite and the reproducing 
	kernel space $\mc K(W)$ does not contain a constant vector function. Examples of such matrix functions can be obtained easily 
	using the theory of canonical systems. Define a function $E=A-iB$ by 
	\[
		\big(A(z),B(z)\big):= \big(A_1(z),B_1(z)\big)W(z)
		\,.
	\]
	Then $E\in\HB$ and $\mc H(E_1)\in\Sub^*\mc H(E)$. Moreover, the space $\mc K(W)$ is isomorphic to the orthogonal complement 
	$\mc H(E)\ominus\mc H(E_1)$ via the map 
	\[
		\binom{f_+}{f_-}\mapsto f_+A_1+f_-B_1
		\,.
	\]
	In particular, $\dim(\mc H(E)\ominus\mc H(E_1))=\dim\mc K(W)=\infty$. Note here that $\mc K(W)$ is certainly infinite dimensional, since 
	it does not contain any constant. Finally, note that all elements $\binom{f_+}{f_-}$ of $\mc K(W)$ are of zero exponential type. 
	In view of \eqref{A46} and the symmetry of $E_1$, this implies that 
	\[
		|f_+(x)A_1(x)+f_-(x)B_1(x)|\lesssim 
		|E_1(x)| (\varphi_{E_1}'(x))^{1/2}=\pi^{1/2} 
		\nabla_{\mc H(E_1)}(x),
		\quad x\in \bb R
		\,,
	\]
	i.e.\ $f_+A_1+f_-B_1\in R_{\nabla_{\!\mc H(E_1)}|_{\bb R}}(\mc H(E))$. We conclude that 
	$R_{\nabla_{\!\mc H(E_1)}|_{\bb R}}(\mc H(E))=\mc H(E)$. 
\end{example}

Our next aim is to obtain some information about majorization on lines 
$D:= \bb R+ih$, $h>0$, parallel to the real axis. 
In order that a subspace $\mc L\in\Sub\mc H$ can be represented by majorization on $D$, it is 
not only necessary to have $\mt_{\mc H}\mc L=0$, but also that $\mc L\in\Sub^*\mc H$. 

\begin{theorem}{A18}
	Consider $D:= \bb R+ih$ where $h>0$. Let $\mc H$ be a de~Branges space, and let $\mc L=\mc H(E_1)\in\Sub^*\mc H$. 
	If $\mt_{\mc H}\mc L=0$, then $\mc L=\mc R_{\mf m_{E_1}|_D}(\mc H)$ and 
	$\mc L\subseteq\mc R_{\nabla_{\!\mc L}|_D}(\mc H)\subseteq\breve{\mc L}$. 
\end{theorem}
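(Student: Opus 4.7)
Plan. The two easy inclusions $\mc L\subseteq\mc R_{\mf m_{E_1}|_D}(\mc H)$ and $\mc L\subseteq\mc R_{\nabla_{\!\mc L}|_D}(\mc H)$ follow from \exref{A50} and \exref{A8} respectively, since restricting a majorant from $\bb C^+\cup\bb R$ to $D=\bb R+ih$ only weakens the bound defining $R_{\mf m}$. The real content is the two reverse inclusions. I will first establish $\mc R_{\mf m_{E_1}|_D}(\mc H)\subseteq\mc L$ by the $\Phi_{F,H}$-technique of \thref{A12} and \thref{A37} adapted to horizontal lines, and then deduce $\mc R_{\nabla_{\!\mc L}|_D}(\mc H)\subseteq\breve{\mc L}$ by applying that equality to the subspace $\breve{\mc L}$ itself.

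For the first reverse inclusion, fix $F\in R_{\mf m_{E_1}|_D}(\mc H)$ and $H\in\mc H\ominus\mc L$, and consider the entire function $\Phi_{F,H}$ of \eqref{A56} with $G:=E_1\in\Assoc\mc L$, which is of zero exponential type. At $w=x+ih\in D$ the basic estimate \eqref{A47} bounds $|\Phi_{F,H}(w)|$ by $\|H\|$ times
\[
	\Big(\int_{\bb R}\frac{|F/E|^2}{|t-w|^2}\,dt\Big)^{\!\!1/2}+\Big|\frac{F(w)}{E_1(w)}\Big|\Big(\int_{\bb R}\frac{|E_1/E|^2}{|t-w|^2}\,dt\Big)^{\!\!1/2}.
\]
The first Poisson-type integral tends to zero as $|x|\to\infty$, since $|F/E|^2\in L^1(\bb R)$, and the second remains uniformly bounded in $x$ (using $|E_1/E|^2/(1+t^2)\in L^1(\bb R)$, which follows from $E_1\in\Assoc\mc H$). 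The decisive gain from $\mf m_{E_1}$ is that $|E_1^\#|<|E_1|$ in $\bb C^+$ forces $\mf m_{E_1}(z)=|E_1(z)|/|z+i|$ on $D$, so $|F(w)/E_1(w)|\lesssim 1/|w+i|\to 0$ along $D$; this drives the second term to zero too. Hence $|\Phi_{F,H}(w)|\to 0$ along $D$, and by conjugate symmetry (using $G=E_1^\#$ in \eqref{A47}) also along $\bar D=\bb R-ih$. Phragm\'en--Lindel\"of in the half-planes $\{\Im z>h\}$ and $\{\Im z<-h\}$ (where zero exponential type is the borderline growth allowed) together with Phragm\'en--Lindel\"of in the strip $|\Im z|\leq h$ shows $\Phi_{F,H}$ is bounded on $\bb C$; Liouville and the decay along $D$ then force $\Phi_{F,H}\equiv 0$. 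Running this for every $H\in\mc H\ominus\mc L$ and invoking \leref{A17}, we obtain $F\in\mc H\cap\Assoc\mc L=\breve{\mc L}$.

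To upgrade $F\in\breve{\mc L}$ to $F\in\mc L$: if $\breve{\mc L}\supsetneq\mc L$, then by the structure of $\breve{\mc L}/\mc L$ one has $\breve{\mc L}=\mc L\oplus\spn\{S_1\}$ with $S_1=e^{i\varphi_0}E_1+e^{-i\varphi_0}E_1^\#$ for some $\varphi_0\in\bb R$. Decomposing $F=G+\lambda S_1$ with $G\in\mc L\subseteq R_{\mf m_{E_1}|_D}(\mc H)$ yields $|\lambda S_1(z)|\lesssim|E_1(z)|/|z+i|$ on $D$; but on $D$, $|E_1^\#(z)/E_1(z)|\to 0$ as $|z|\to\infty$, hence $|S_1(z)|\sim|E_1(z)|$, so $\lambda=0$. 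Closedness of $\mc L$ then gives $\mc R_{\mf m_{E_1}|_D}(\mc H)\subseteq\mc L$. For the last inclusion $\mc R_{\nabla_{\!\mc L}|_D}(\mc H)\subseteq\breve{\mc L}$, apply the equality just proved to $\breve{\mc L}$, which lies in $\Sub^*\mc H$ and satisfies $\mt_{\mc H}\breve{\mc L}=\mt_{\mc H}\mc L=0$: with $\breve E\in\HB$ generating $\breve{\mc L}$ we get $\breve{\mc L}=\mc R_{\mf m_{\breve E}|_D}(\mc H)$, and from $\breve E=E_1-ilzA_1$ (cf.\ \leref{A17}) the $ilzA_1$-term dominates on $D$ for large $|z|$, giving $|\breve E(z)|\gtrsim|z||E_1(z)|$ and therefore $\mf m_{\breve E}(z)\gtrsim|E_1(z)|\gtrsim\nabla_{\!\mc L}(z)$ on $D$; hence $R_{\nabla_{\!\mc L}|_D}(\mc H)\subseteq R_{\mf m_{\breve E}|_D}(\mc H)\subseteq\breve{\mc L}$.

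The main expected obstacle is making the step $\Phi_{F,H}\equiv 0$ rigorous: zero exponential type is precisely the borderline case for the half-plane Phragm\'en--Lindel\"of principle, and it is essential that $\Phi_{F,H}$ actually decays (not merely stays bounded) along $D$ so that Liouville forces the constant to vanish. This decay is exactly what the factor $1/|z+i|$ in $\mf m_{E_1}$ provides and what $\nabla_{\!\mc L}$ does not; the function $S_1$ above is admissible for $\nabla_{\!\mc L}$ on $D$ but not for $\mf m_{E_1}$, explaining why the equality $\mc L=\mc R_{\mf m|_D}(\mc H)$ holds only for the sharper majorant.
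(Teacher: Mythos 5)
Your approach is genuinely different from the paper's, which does not use the $\Phi_{F,H}$-machinery at all for horizontal lines. The paper's proof of this theorem works directly in the Smirnov class: with $\bb H:=\{\Im z>h\}$, it shows that any $S\in\Assoc\mc H$ with $E_1^{-1}S$ bounded on $\bb R+ih$ is in $\mc N_+(\bb H)$ of nonpositive mean type, hence (Smirnov Maximum Principle) bounded throughout $\bb H$, and then invokes \cite[Theorem 26]{debranges:1968} with the measure $|E|^{-2}dt$ to conclude $S\in\Assoc\mc L$. Applying this to $S=F$ gives $\mc R_{\nabla_{\!\mc L}|_D}(\mc H)\subseteq\breve{\mc L}$ via \leref{A17}, and applying it to $S(z)=zF(z)$ gives $\mc R_{\mf m_{E_1}|_D}(\mc H)\subseteq\mc L$ directly, with no case distinction and no comparison of inner function moduli. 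That route is considerably shorter, avoids the Phragm\'en--Lindel\"of borderline entirely, and sidesteps the difficulty that makes \thref{A48} hard.

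However, your argument as written has a concrete gap at the step ``\emph{on $D$, $|E_1^\#(z)/E_1(z)|\to 0$ as $|z|\to\infty$, hence $|S_1(z)|\sim|E_1(z)|$}.'' This is false. The inner function $\Theta=E_1^\#/E_1$ satisfies $|\Theta|<1$ on $D$, but its modulus need not tend to $0$ along $\bb R+ih$; for $E_1(z)=e^{-iaz}$ one has $|\Theta(x+ih)|=e^{-2ah}$, a nonzero constant, and for general $E_1$ one may have $|\Theta(x_n+ih)|\to 1$ along a sequence, with $\Theta$ approaching $-e^{2i\varphi_0}$ there, so that $|S_1|/|E_1|$ is not bounded below. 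Excluding the possibility $|S_1(z)|\lesssim|E_1(z)|/|z+i|$ on $D$ is exactly the nontrivial point in the analogous situation of \thref{A48}, where it requires the weak-type machinery of \thref{A60}. For the full line $\bb R+ih$ this can be repaired more cheaply (if $\lambda\neq0$, then $|1+\Theta|\lesssim 1/(1+|x|)$ on $D$ forces $\big|\frac{1-\Theta}{1+\Theta}(x+ih)\big|\gtrsim 1+|x|$ for all $x$, contradicting Kolmogorov's weak-type estimate of \leref{A63}\,$(i)$ applied to the Herglotz function $\tfrac{1-\Theta}{1+\Theta}\in\mc C_1$), but that repair is itself a substantive new argument, not contained in what you wrote.

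Two smaller issues: the claim that the Poisson-type integral $\int|E_1/E|^2|t-w|^{-2}\,dt$ ``remains uniformly bounded in $x$'' is not correct in general (only $\int|E_1/E|^2(1+t^2)^{-1}\,dt<\infty$ is guaranteed, and the Poisson ratio $\frac{1+t^2}{(t-x)^2+h^2}$ is unbounded near $t=x$); what you actually need, and what is true, is that the \emph{product} of the integral with $|F(w)/E_1(w)|^2\lesssim(1+x^2)^{-1}$ tends to $0$, which requires a genuine splitting argument and not the stated boundedness of the integral alone. Finally, your derivation of $\mc R_{\nabla_{\!\mc L}|_D}(\mc H)\subseteq\breve{\mc L}$ via $\nabla_{\!\mc L}\lesssim\mf m_{\breve E}$ on $D$ uses the explicit shape $\breve E=E_1-ilzA_1$ with $l>0$, i.e.\ it tacitly assumes $\breve{\mc L}\supsetneq\mc L$. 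When $\breve{\mc L}=\mc L$ one has $\mf m_{\breve E}\asymp\mf m_{E_1}\ll\nabla_{\!\mc L}$ on $D$ and the comparison fails, so that case is left uncovered; the paper's direct argument has no such case split.
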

\begin{proof}
	It is easy to see that, for the proof of the present assertion, we may assume without loss of generality that 
	$\mf d_{\mc H}=0$. 

\hspace*{0pt}\\[-2mm]\textit{Step 1:} Write $\mc H=\mc H(E)$. We prove the following statement: \textit{If 
	$S\in\Assoc\mc H$, and $|E_1^{-1}S|$ and $|E_1^{-1}S^\#|$ are bounded on the line $D=\bb R+ih$, then $S\in\Assoc\mc L$.}

	The function $E_1^{-1}S$ is analytic on a domain containing the closed 
	half-plane $\{z\in\bb C:\,\Im z\geq h\}$ and is of bounded type in the open half-plane 
	$\bb H:= \{z\in\bb C:\,\Im z>h\}$. Also, we have 
	\[
		\mt\frac S{E_1}=\mt\frac SE+\underbrace{\mt\frac E{E_1}}_{=0}\leq 0
		\,.
	\]
	Thus it belongs to the Smirnov class $\mc N_+(\bb H)$ in the half-plane $\bb H$. The Smirnov Maximum Principle hence applies, 
	and we obtain that $E_1^{-1}S$ is bounded throughout $\bb H$. The same argument applies with $S^\#$ in place of $S$. 
	Applying \cite[Theorem 26]{debranges:1968} with the measure $d\mu(t):= |E(t)|^{-2}\,dt$ 
	gives $S\in\Assoc\mc L$. 

\hspace*{0pt}\\[-2mm]\textit{Step 2:} Let $F\in R_{\nabla_{\!\mc L}|_D}(\mc H)$. We have 
	\[
		\nabla_{\!\mc L}(z)=\Big(\frac{|E_1(z)|^2-|E_1^\#(z)|^2}{4\pi h}\Big)^{1/2}\lesssim|E_1(z)|, \qquad z\in D
		\,,
	\]
	and hence $|E_1^{-1}F|$ and $|E_1^{-1}F^\#|$ are bounded on $D$. By Step 1, it follows that $F\in\Assoc\mc L$. 
	\leref{A17} implies that $F\in\breve{\mc L}$, and we conclude that $\mc R_{\nabla_{\!\mc L}|_D}(\mc H)\subseteq\breve{\mc L}$. 

\hspace*{0pt}\\[-2mm]\textit{Step 3:} Let $F\in R_{\mf m_{E_1}|_D}(\mc H)$. The function $S(z):= zF(z)$ is associated to $\mc H$, 
	and $E_1^{-1}S$ as well as $E_1^{-1}S$ are bounded on $D$. By Step 1, $S\in\Assoc\mc L$, 
	and thus $F\in\mc L$. We see that $\mc R_{\mf m_{E_1}|_D}(\mc H)\subseteq\mc L$. The reverse inclusion holds in any case, 
	cf.\ \exref{A50}. 
\end{proof}

It is easy to give an example of de~Branges spaces $\mc H$ and $\mc L\in\Sub^*\mc H$, $\mt_{\mc H}\mc L=0$, such that 
$\mc R_{\nabla_{\!\mc L}|_{\bb R+ih}}(\mc H)\neq\mc L$. 

\begin{example}{A20}
	Let $D:= \bb R+ih$ where $h>0$. Consider the space $\mc H:= \mc H(E)$ generated by the function 
	\[
		E(z):= \cos z-i(z\cos z+\sin z)
		\,.
	\]
	The choice of $E$ is made such that 
	\[
		(A(z),B(z))=(\cos z,\sin z)
		\begin{pmatrix}
			1 & z\\
			0 & 1
		\end{pmatrix}
	\]
	Thus $\mc H(E)$ contains $\mc L:= \PW_1$ as a dB-subspace with codimension $1$, 
	and $\mc H=\mc L\oplus\spn\{\cos z\}$. Since for $z=x+ih\in D$,
	$\nabla_{\PW_1}(z)=\big(\frac{{\rm sh}\, 2h}{2\pi h}\big)^{1/2}$,
	the function $\cos z$ belongs to $R_{\nabla_{\!\mc L}|_D}(\mc H)$. 
	Thus $\mc R_{\nabla_{\!\mc L}|_D}(\mc H)=\mc H$. 
\end{example}

Finally, we turn to majorization on rays parallel to the real axis. 

\begin{theorem}{A48}
	Consider $D:= iy_0+[h,\infty)$ where $h\in\bb R$ and $y_0\geq 0$. 
	Let $\mc H$ be a de Branges space, assume that each element of 
	$\mc H$ is of zero type with respect to the 
	order $\rho:= \frac 12$, and let $\mc L=\mc H(E_1)\in\Sub^*\mc H$. Then 
	$\mc R_{\mf m_{E_1}|_D}(\mc H) = \mc L$.
\end{theorem}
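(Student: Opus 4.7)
The inclusion $\mc L\subseteq \mc R_{\mf m_{E_1}|_D}(\mc H)$ is immediate from \exref{A50}, so the task reduces to showing $R_{\mf m_{E_1}|_D}(\mc H)\subseteq\mc L$. My plan is to adapt the template of the proof of \thref{A37}: for $F\in R_{\mf m_{E_1}|_D}(\mc H)$ and $H\in\mc H\ominus\mc L$, introduce the auxiliary function $\Phi_{F,H}$ of \eqref{A56}, prove that $\Phi_{F,H}\equiv 0$, and then conclude $F\in\mc L$. Geometrically, $D$ is a horizontal ray, corresponding to the borderline $\beta\to 0$ (and $\rho\to\tfrac12$) that was excluded in \thref{A37}; the hypothesis that every element of $\mc H$ is of order $\le\tfrac12$ zero type is exactly what restores the Phragm\'en--Lindel\"of argument in this regime.

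As in the proofs of \thref{A12} and \thref{A37}, the standard de~Branges computation shows that $\Phi_{F,H}$ is entire of zero exponential type. The order-$\tfrac12$ zero type hypothesis on $\mc H$ transfers to $\Assoc\mc H$, so $F$, $E$, $E_1$, $E_1^\#$ are all of order $\le\tfrac12$ zero type. Applying the Schwarz estimate \eqref{A47} with $G=E_1$, the half-plane bound $\int|F/E|^2|s-w|^{-2}\,ds\le\|F\|_{\mc H}^2/|\Im w|^2$, and a minimum-modulus lower bound $|E_1(w)|\gtrsim\exp(-\epsilon|w|^{1/2})$ valid (for any $\epsilon>0$) off an exceptional set of finite logarithmic measure, one obtains $|\Phi_{F,H}(w)|\lesssim\exp(2\epsilon|w|^{1/2})$ outside that set; since $\Phi_{F,H}$ is entire and $\epsilon>0$ is arbitrary, this lifts globally to give that $\Phi_{F,H}$ is of order $\le\tfrac12$ and zero type.

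The next step is to exploit the $\mf m_{E_1}$-majorization to produce the sharp bound $|F(w)/E_1(w)|\lesssim 1/|w+i|$ on $D$. Inserting this into \eqref{A47} with $G=E_1$ and $w=t+iy_0\in D$, and using $F/E\in L^2(\bb R)$ together with $\int|E_1/E|^2(1+t^2)^{-1}\,dt<\infty$, a careful split of the integration domain into $|s-t|\le t/2$ and $|s-t|>t/2$ shows that the prefactor $1/|w+i|$ dominates the at-most-polynomial growth in $t$ of $\bigl(\int|E_1/E|^2|s-w|^{-2}\,ds\bigr)^{1/2}$, so $\Phi_{F,H}(w)\to 0$ as $|w|\to\infty$ along $D$; the symmetric argument with $G=E_1^\#$ handles $\overline D$. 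Now apply the Phragm\'en--Lindel\"of Principle in $\bb C\setminus D$, which (via a square-root uniformization at the endpoint of $D$) is a sector of opening $2\pi=\pi/\rho$: since $\Phi_{F,H}$ is of order $\le\tfrac12$ zero type and bounded on both sides of $D$, it is bounded on $\bb C\setminus D$, hence on $\bb C$ by continuity; Liouville then forces it to be constant, and the decay along $D$ identifies the constant as $0$. Thus $\Phi_{F,H}\equiv 0$.

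Since $H\in\mc H\ominus\mc L$ was arbitrary, the same reasoning as in Step 2 of the proof of \thref{A12}, together with \leref{A17}, yields $F\in\breve{\mc L}$. To upgrade from $F\in\breve{\mc L}$ to $F\in\mc L$ it suffices to establish a horizontal-ray analog of \leref{A52}: in the codimension-one case $\dim\mc H/\mc L=1$, the generator $S=A_1\cos\phi+B_1\sin\phi$ of $\mc H\ominus\mc L$ does not lie in $R_{\mf m_{E_1}|_D}(\mc H)$. The condition $|S(z)|\lesssim|E_1(z)|/|z+i|$ on $D$ is equivalent to $|1+e^{-2i\phi}E_1^\#(z)/E_1(z)|=O(1/|z|)$ along $D$, and a direct asymptotic analysis of the Blaschke-type product $E_1^\#/E_1$ on the horizontal line $\Im z=y_0$ (exploiting that $E_1$ is of order $\tfrac12$ zero type) will rule this out. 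The two main obstacles I anticipate are (i) making the decay estimate of $\Phi_{F,H}$ along $D$ precise, since, in contrast to \thref{A37}, the Poisson-type kernel $1/((s-t)^2+y_0^2)$ does not shrink globally when $\Im w=y_0$ is constant, so the decay must come entirely from the $1/|w+i|$ prefactor, and (ii) the codimension-one exclusion at the end, which cannot rely on the convenient vertical-ray divergence used in \leref{A52}.
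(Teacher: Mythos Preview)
Your outline tracks the paper's proof through the inclusion $R_{\mf m_{E_1}|_D}(\mc H)\subseteq\breve{\mc L}$: the paper likewise shows that $\Phi_{F,H}$ is of order $\tfrac12$ zero type, proves decay along $D$, and applies Phragm\'en--Lindel\"of in the slit plane. One simplification you miss: instead of $G=E_1$ in \eqref{A47}, the paper takes $G(z)=(z-z_0)^{-1}E_1(z)$ for a zero $z_0$ of $E_1$. Then $G\in\mc L\subseteq\mc H$ gives $G/E\in L^2(\bb R)$ outright, and since $|t-w|\ge y_0$ the Bounded Convergence Theorem immediately yields decay of both integrals, while $|F(w)/G(w)|=|F(w)(w-z_0)/E_1(w)|\lesssim 1$ by the majorant hypothesis. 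This sidesteps your obstacle (i) entirely.

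The genuine gap is your codimension-one step. Your claimed reduction ``it suffices to show $S\notin R_{\mf m_{E_1}|_D}(\mc H)$'' mimics the proof of \leref{A52}, but that proof works only because $\mc L\subseteq R_{\nabla_{\!\mc L}|_D}(\mc H)$, allowing one to subtract the $\mc L$-component of any $F\in\breve{\mc L}$. For the majorant $\mf m_{E_1}$ this inclusion fails: one has only $\mc L\subseteq\mc R_{\mf m_{E_1}|_D}(\mc H)$, with the closure genuinely needed (cf.\ \reref{A32}). Hence an element $F_0+\lambda S$ with $F_0\in\mc L$, $\lambda\neq 0$, may lie in $R_{\mf m_{E_1}|_D}(\mc H)$ even though neither $F_0$ nor $S$ does individually; your asymptotic analysis of $E_1^\#/E_1$ addresses only the case $F_0=0$. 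What must actually be excluded is the existence of \emph{some} $f\in\mc K_\Theta$ (namely $f=\lambda^{-1}e^{-i\alpha}F_0/E_1$) with $|f+1-\Theta|\lesssim|z+i|^{-1}$ along $D$. This is precisely \thref{A60}, whose proof is substantial: it combines Clark-measure representation of $\mc K_\Theta$ with Kolmogorov-type weak $(1,1)$ estimates for the Hilbert transform, and is not accessible by a direct analysis of the Blaschke product. Finally, you do not treat the case $y_0=0$; the paper handles it separately by a Poisson-integral argument showing that majorization on $[h,\infty)$ propagates to every line $iy+[h,\infty)$, $y>0$, thereby reducing to the case already settled.
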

\begin{proof}

	\hspace*{0pt}\\[1mm]\textit{Step 1: The case $y_0>0$.} We proceed similar as in the proof of \thref{A37}. 
	Fix $E,E_1\in\HB$ such that $\mc H=\mc H(E)$ and $\mc L=\mc H(E_1)$. Let $F\in R_{\mf m_{E_1}|_D}(\mc H)$ and 
	$H\in\mc H\ominus\mc L$ be given, and consider the function $\Phi_{F,H}$
	defined as in \eqref{A56}. 
	
	The argument which was carried out in Step 1 of the proof of \thref{A37}, yields that $\Phi_{F,H}$ is of zero 
	type with respect to the order $\frac 12$. Let $C>0$ be such that $|F(z)|,|F^\#(z)|\leq C\mf m_{E_1}|_D(z)$, 
	$z\in D$. Moreover, let $z_0$ be a zero of $E_1$. 
	The basic estimate \eqref{A47}, used with $G(z):= (z-z_0)^{-1}E_1(z)$ and $w\in D$, gives 
	\begin{multline*}
		|\Phi_{F,H}(w)| \leq \Big(\int_{\bb R}\Big|\frac{F(t)}{E(t)}\Big|^2\frac{dt}{|t-w|^2}\Big)^{1/2}\cdot\|H\|_{\mc H}+ 
			\\[1mm]
		+C\frac{|w-z_0|}{|w+i|}\Big(\int_{\bb R}\Big|\frac{G(t)}{E(t)}\Big|^2\frac{dt}{|t-w|^2}\Big)^{1/2}
		\cdot\|H\|_{\mc H},\qquad w\in D
		\,.
	\end{multline*}
	However, since $F\in\mc H$ and $G\in\mc L\subseteq\mc H$, we have $F,G\in L^2(|E(t)|^{-2}\,dt)$. Moreover, 
	for $w\in D$, $|t-w|\geq y_0>0$. Hence, we may apply the Bounded Convergence Theorem to obtain 
	\[
		\lim_{\substack{|w|\to\infty\\ w\in D}}|\Phi_{F,H}(w)|=0
		\,.
	\]
	Since $\Phi_{F,H}$ is of order $\frac 12$ and zero type, the Phragm\'{e}n--Lindel{\"o}f Principle implies that 
	$\Phi_{F,H}$ vanishes identically. 
	
	Since $F\in R_{\mf m_{E_1}|_D}(\mc H)$ and $H\in\mc H\ominus\mc L$ were arbitrary, we conclude with the help 
	of \leref{A17} that $R_{\mf m_{E_1}|_D}(\mc H)\subseteq\breve{\mc L}$. 
	Thus, $\mc L\subseteq \mc R_{\mf m_{E_1}|_D}(\mc H)\subseteq\breve{\mc L}$. 
	
	In order to complete the proof, 
	assume on the contrary that $\mc L\subsetneq\breve{\mc L}$ and $\mc R_{\mf m_{E_1}|_D}(\mc H)=\breve{\mc L}$. Let 
	$\alpha\in[0,\pi)$ be such that 
	\[
		\breve{\mc L}=\mc L\oplus\spn\big\{e^{i\alpha}E_1-e^{-i\alpha}E_1^\#\big\}
		\,.
	\]
	Note that, in particular, $e^{i\alpha}E_1-e^{-i\alpha}E_1^\#\not\in\mc L$. 
	Since $R_{\mf m_{E_1}|_D}(\mc H)\nsubseteq\mc L$, 
	we can find a function $F\in\mc H(E_1)$ and a constant $\lambda\in\bb C\setminus\{0\}$, such that 
	\[
		F+\lambda\big(e^{i\alpha}E_1-e^{-i\alpha}E_1^\#\big)\in R_{\mf m_{E_1}|_D}(\mc H)
		\,.
	\]
	Set $\Theta:=e^{-2i\alpha}E_1^{-1}E_1^\#$ and consider the associated
        model subspace $\mc K_\Theta := H^2\ominus\Theta H^2$ of the Hardy space. 
	Recall that the mapping $F\mapsto F/E_1$ is a unitary transform of 
	$\mc H(E_1)$ onto $\mc K_\Theta$. Then 
	\[
		\bigg|\frac 1\lambda e^{-i\alpha}\underbrace{E_1^{-1}F}_{\in\mc K_\Theta}+1-\Theta\bigg|
                \lesssim\frac{1}{|z+i|}, \qquad z\in D
		\,.
	\]
	\thref{A60} implies that $1-\Theta\in\mc K_\Theta$. This contradicts the fact that 
	$e^{i\alpha}E_1-e^{-i\alpha}E_1^\#\not\in\mc L$. 

	\hspace*{0pt}\\[-2mm]\textit{Step 2: The case $y_0=0$.} We show that majorization remains present on each 
	fixed ray $iy+[h,\infty)$, $y>0$. This reduces the case $y_0=0$ to the case already settled in Step 1. 
	
	Let $F\in R_{\mf m_{E_1}|_D}(\mc H)$ be given. 
	Consider the function $f(z):= E_1^{-1}(z)\cdot zF(z)$. Then $f$ is of bounded type in $\bb C^+$. 
	Since $F$ and $E_1$ are entire functions of order $\frac 12$, we certainly have $\mt f=0$. Moreover, $f$ 
	has an analytic continuation to some domain which 
	contains the closure of $\bb C^+$. We conclude that 
	$f$ belongs to the Smirnov class $\mc N_+$. Hence $\log|f|$ is majorized throughout the half-plane $\bb C^+$ by 
	the Poisson integral of its boundary values: 
	\[
		\log|f(z)|\leq\frac{y}{\pi}\int_{\bb R}\frac{\log|f(t)|}{(t-x)^2+y^2}\,dt
		,\qquad z=x+iy\in\bb C^+
		\,.
	\]
	Since $F\in R_{\mf m_{E_1}|_D}(\mc H)$, the function $f$ is bounded on $D=[h,\infty)$. Again, since 
	$F$ and $E_1$ are of order $\frac 12$, we have 
	\[
		\int_{\bb R}\frac{\big|\log|tF(t)|\big|}{t^2+1}\,dt<\infty,\qquad 
		\int_{\bb R}\frac{\big|\log|E_1(t)|\big|}{t^2+1}\,dt<\infty
		\,,
	\]
	cf.\ \cite[p. 50, Theorem]{koosis:1988}. Hence we may estimate 
	\[
	\begin{aligned}
		\log|f(z)| 
		& \leq\frac{y}{\pi}\int_{[h,\infty)}\frac{\log^+|f(t)|}{(t-x)^2+y^2}\,dt+
		\frac{y}{\pi}\int_{(-\infty,h)}\frac{\log^+|tF(t)|}{(t-x)^2+y^2}\,dt \\
		& +\frac{y}{\pi}\int_{(-\infty,h)}\frac{\log^-|E_1(t)|}{(t-x)^2+y^2}\,dt
		,\qquad z=x+iy\in\bb C^+
		\,.
	\end{aligned}
	\]
	Since $f$ is bounded on $[h,\infty)$, the first summand is bounded independently of $z\in\bb C^+$. The second and 
	third summands are, for each fixed $y>0$, nonincreasing and nonnegative functions of $x\geq h$. In particular, they are 
	bounded on each ray $iy+[h,\infty)$, $y>0$. It follows that, for each fixed positive value of $y$, we have 
	$|F(z)|\lesssim\mf m_{E_1}(z)$, $z\in iy+[h,\infty)$. 
\end{proof}

The following two examples show that the statement in \thref{A48} is in some ways sharp. 

\begin{example}{A38}
	There exists a space $\mc H=\mc H(E)$ with $E$ of order $\frac 12$ and finite type, 
	and a subspace $\mc L \in \Sub^*(\mc H)$, such that $\mc R_{\mf m_{E_1}|_D}(\mc H) \neq \mc L$.
	To show this we construct a  matrix $W$ with components of order $\frac 12$
	such that the space $\mc K(W)$ exists and such that
	one of its rows is bounded on $(0,\infty)$.

	Consider the two auxiliary functions 
	\[
		G(z):= \prod\limits_{n\in\mathbb{N}}\left(1-\frac{z}{n^2-i}\right)
		= c\frac{\sin (\pi\sqrt{z+i})}{\pi\sqrt{z+i}}, \qquad 
                c= \prod\limits_{n\in\mathbb{N}} \frac{n^2-i}{n^2},
	\]
	\[
		\tilde G(z) :=  \prod\limits_{n\in\bb N} \bigg(1-\frac{z}{n^2-in}\bigg).
	\] 
	Let $ x\in (k-1/2,k+1/2)$, $k\in\mathbb{N}$. Then we can write
	\[
		|G(x^2)|= \left| \frac{k^2-x^2+i}{k^2-i} \right|
		\prod\limits_{n\ne k} \left| 1-\frac{x^2}{n^2-i} \right|,
	\]
	and it is easy to see that 
	\[	       
		|G(x^2)| \asymp\left| \frac{k^2-x^2+i}{k^2-i} \right|
		\cdot \frac{| \sin\pi x|}{x}\cdot \left|1-\frac{x^2}{k^2}\right|^{-1}
		\asymp \left| \frac{k^2-x^2+i}{x(x+k)} \right|
		\cdot \frac{| \sin\pi (x-k)|}{|x-k|}.
	\]
	Next, note that
	\[
		\bigg|\frac {\tilde G(x^2)}{G(x^2)}\bigg|^2 
		 = \prod\limits_{n\in\bb N}
		\frac{(x^2-n^2)^2+n^2}{(x^2-n^2)^2+1}\cdot\frac{n^4+1}{n^4+n^2}    
		 \asymp \frac{(x^2-k^2)^2 + k^2}{(x^2-k^2)^2+1}.
	\]
	Combining this, we conclude that
	\[
		|\tilde G(x^2)|\asymp \frac{|x^2-k^2+ki|}{x(x+k)}\asymp k^{-1}\asymp x^{-1}, 
		\qquad x>1.
	\]
%
%
\comment{
	Consider two auxiliary functions: 
	\[
		G(z):=\prod\limits_{n\in\mathbb{N}}\left(1-\frac{z}{n^2-i}\right)
		= \frac{\sin (\pi\sqrt{z+i})}{\pi \sqrt{z+i}};
	\]
	\[
		\tilde G(z) := \prod\limits_{n\in\bb N} \bigg(1-\frac{z}{n^2-in}\bigg).
	\] 
	Let $ x\in (k-1/2,k+1/2)$, $k\in\mathbb{N}$. Then we write
	\[
		|G(x^2)|= \left| \frac{k^2-x^2+i}{k^2-i} \right|
		\prod\limits_{n\ne k} \left| 1-\frac{x^2}{n^2-i} \right|.
	\]
	We can replace in the product $n^2-i$ by $n^2$. Indeed,
	\[
	\prod\limits_{n\ne k} \left| 1-\frac{x^2}{n^2-i} \right| =
	\prod\limits_{n\ne k} \left| 1-\frac{x^2}{n^2} \right| 
	\prod\limits_{n\ne k} \left| \frac{n^2-i-x^2}{n^2-i} \right|
	\left| \frac{n^2}{n^2-x^2}\right|.
	\]
	The infinite product
	\[
	\prod\limits_{n\in \bb N} \left| \frac{n^2}{n^2-i}\right|
	\]
	obviosly converges, the same is true for
	\[
	\prod\limits_{n\ne k} \left| \frac{n^2-i-x^2}{n^2-x^2}\right|.
	\]
	Moreover, if $ x\in (k-1/2,k+1/2)$, then 
	\[
	\sum_{n\ne k}\frac{1}{|n^2-x^2|}  \le 
	\bigg(\sum_{n}\frac{1}{n^2}\bigg)^{1/2}
	\bigg(\sum_{n\ne k}\frac{1}{(n-x)^2}\bigg)^{1/2}
	\le C
	\]
	for a constant $C$ independent of $k$ and $ x\in (k-1/2,k+1/2)$. 
	Hence, there exists absolute
	positive constants  $a$ and $A$ such that
	\[
	a\le \prod\limits_{n\ne k} \left| \frac{n^2-i-x^2}{n^2-x^2}\right| \le A.
	\]
	So 
	\[	 
	\begin{aligned}      
		|G(x^2)| & 
		\asymp\left| \frac{k^2-x^2+i}{k^2-i} \right|  
		\prod\limits_{n\ne k} \left| 1-\frac{x^2}{n^2} \right| \\
		&  = 
		\left| \frac{k^2-x^2+i}{k^2-i} \right|
		\cdot \frac{| \sin\pi x|}{\pi x} 
		\left|1-\frac{x^2}{k^2}\right|^{-1} \\
		 & \asymp \left| \frac{k^2-x^2+i}{k^2-i)} \right|
		\cdot \frac{| \sin\pi (x-k)|}{x} \frac{k^2}{|x^2-k^2|} \\
		& \asymp \left| \frac{k^2-x^2+i}{x(x+k)} \right|
		\cdot \frac{| \sin\pi (x-k)|}{|x-k|},
	\end{aligned}
	\]
	and all the constants are absolute. Now, note that
	\[
		\bigg|\frac {\tilde G(x^2)}{G(x^2)}\bigg|^2 
		 = \prod\limits_{n\in\bb N}
		\frac{(x^2-n^2)^2+n^2}{(x^2-n^2)^2+1}\cdot\frac{n^4+1}{n^4+n^2}    
		 \asymp \frac{(x^2-k^2)^2 + k^2}{(x^2-k^2)^2+1}.
	\]
	Again, we use the fact that the product 
	\[
		\prod\limits_{n\in\bb N}\frac{n^4+1}{n^4+n^2}    
	\]
	converges, and that
	\[
		\tilde a \le \prod\limits_{n\ne k}
		\frac{(x^2-n^2)^2+n^2}{(x^2-n^2)^2+1}\le  \tilde A
	\]
	for some absolute constants 
	(again,
	\[
		\sum_{n\ne k} \frac{n^2-1}{(x^2-n^2)^2+1}\le
		\sum_{n\ne k} \frac{1}{|x^2-n^2|}\le  C
	\]
	independently of $k$ and $x$).

	Combining this, we conclude that
	\[
		|\tilde G(x^2)|\asymp \frac{|x^2-k^2+ki|}{x(x+k)}\asymp k^{-1}\asymp x^{-1/2}, 
		\qquad x>1.
	\]
}
	Set $E_0(z) = (z+i)(\tilde G(z))^2$. Then $E_0$ is of order $1/2$ and finite type,
	we have $|E_0(x)|\asymp 1$, $x>1$, and $\log|E(x)| \asymp |x|^{1/2}$, 
	$x\to -\infty$. Therefore $1\in \Assoc(\mc H(E_0))$. Let $E_0 = A_0-iB_0$.
	Changing slightly the function $E_0$ we may assume that $A_0(0)=1$, $B_0(0)=0$.
	Then by \cite[Theorems 27, 28]{debranges:1968},
	there exist real entire functions $C_0, D_0$, 
	with $D_0(0)=1$, $C_0(0)=0$, such that for the matrix
	\[
		W:= \begin{pmatrix}
		A_0 & B_0\\
		C_0 & D_0
		\end{pmatrix}
	\] 
	the space $\mc K(W)$ exists. Thus also the space $\mc K(\tilde W)$ exists, where 
	\[
		\tilde W:= 
		\begin{pmatrix}
			D_0 & B_0\\
			C_0 & A_0
		\end{pmatrix}
		\,.
	\] 
	Let $\mc H(E_1)$ be an arbitrary de Branges space, and set $E=A-iB$, where $(A,B):= (A_1,B_1)\tilde W$. 
	Define 
	\[
		\begin{pmatrix}
		f_+ \\
		f_- \end{pmatrix} 
		:= 
		\frac{\tilde W(z)J - J}{z}
		\begin{pmatrix}
		1 \\
		0 \end{pmatrix} =
		\begin{pmatrix}
		\frac{B_0(z)}{z}\\
		\frac{A_0(z)-1}{z}\end{pmatrix},  \qquad     
		J= \begin{pmatrix} 0 & -1\\ 1 & 0 \end{pmatrix}\,.
	\]
	Then $\binom{f_+}{f_-} \in \mc K(W) $and so 
        $f_+ A_1 + f_- B_1 \in \mc H(E)\ominus \mc L$. However, this function 
	is majorized by $\mf m_{E_1}$ on $(0,\infty)$. Thus $\mc R_{\mf m_{E_1}|_{(0,\infty)}}(\mc H(E))\supsetneq\mc H(E_1)$. 
\end{example}

\begin{example}{A41}
	There exist spaces $\mc H=\mc H(E)$, $\mc L = \mc H(E_1)$, with functions $E$, $E_1$, 
	of arbitrarily small order such that $\mc L\in\Sub\mc H$ and, for $D=\bb R+iy_0$, 
	\[
		\mc R_{\nabla_{\!\mc L}|_D}(\mc H) \neq \mc L.
	\]
	Thus, in the statement of \thref{A48} we cannot replace $\mc R_{\mf m_{E_1}|_D}(\mc H)$ 
	by $\mc R_{\nabla_{\!\mc L}|_D} (\mc H)$. 

	To show this, let $\alpha>1$, let $t_n= |n|^{\alpha}$, $n\in \bb Z \setminus \{0\}$, 
	and let $\mu_n = |n|^{2\alpha-2}$. Put
	\[
		q(z) = \sum_{n\in \bb Z\setminus\{0\}} 
		\mu_n \bigg(\frac{1}{t_n-z}- \frac{1}{t_n}\bigg).
	\]
	The series converges since
	\[
		\sum_n \frac{\mu_n}{t_n^2} = \sum_n \frac{|n|^{2\alpha-2}}{|n|^{2\alpha}}
		= \sum_n \frac{1}{n^2}<\infty.
	\]
	There exist real entire functions $A_1$ and $B_1$ such that $q=B_1/A_1$ 
	and $\mc H(E_1)$ exists.
	We show that for $\mc L=\mc H(E_1)$ and $y_0>0$
	we have
	\begin{equation}\label{A40}
		|A_1(x+iy_0)| \lesssim \nabla_{\!\mc L}(x+iy_0), \qquad x\in \bb R.
	\end{equation}
	Put $\Theta = E_1^{-1} E_1^\#$. Then \eqref{A40} is equivalent to
	\begin{equation}\label{A44}
		|1+\Theta(x+iy_0)|^2 \lesssim 1-|\Theta(x+iy_0)|^2,
		\qquad x\in \bb R.
	\end{equation}
	Also, $q=i\frac{1-\Theta}{1+\Theta}$ and so
	\[
		\Im q(x+iy_0) = \frac{1-|\Theta(x+iy_0)|^2}{|1+\Theta(x+iy_0)|^2}
		=y_0\sum_n \frac{|n|^{2\alpha-2}}{(x-|n|^\alpha)^2+y_0^2}.
	\]
	It is easy to see that $\Im q(x+iy_0) \gtrsim 1$, which implies \eqref{A44}.
	Indeed, let $x\in [k^\alpha, (k+1)^\alpha]$, $k\in \bb N$.
	Then $|x-k^\alpha|\lesssim k^{\alpha-1}$ with constants independent of $k$.
	Hence
	\[
		\sum_n \frac{|n|^{2\alpha-2}}{(x-|n|^\alpha)^2+y_0^2}>
		\frac{k^{2\alpha-2}}{(x-k^\alpha)^2+y_0^2}\gtrsim 1.
	\]
\end{example}

\begin{remark}{A53}
	We return to the comment made in \reref{A39}. Seeking an example that the growth assumption in \thref{A37} is necessary, 
	the first idea would be to proceed in the same way as in \exref{A38}. But this is not possible. The reason is that 
	there exists no function $E_0\in\HB$, such that $1\in\Assoc (\mc H(E_0))$ and
	$E_0 = A_0-iB_0$ is bounded on $D$. Indeed, if $E_0$ would have these properties, then 
	$\frac{1}{(z+i)E_0} \in H^2$, 
	which implies that $|E_0(x+i)|\gtrsim |x+i|^{-1}$, $x\in \bb R$. 
	Also since $E_0$ is not a polynomial, $\log |E_0(iy)|> N \log y$,
	$y\to\infty$, for any fixed $N>0$. 
	Applying the Poisson formula in the angle $\{\Re z>0, \Im z>1\}$
	to $\log|E_0|$ ("Two Constant Theorem") we conclude that $E_0$ 
	is unbounded on $D$.
%
%
\comment{
	The "Two Constant Theorem" is the following general principle.
	Let $\Omega$ be a domain with the boundary $\partial \Omega = 
	\gamma_1 \cup\gamma_2$. Let $f$ be an analytic in $\Omega$ and
	continuous in $\clos \Omega$ (this may be weakened, of course) 
	function and let $\log |f(\zeta)| \le M_j$, $\zeta\in \gamma_j$, where
	$M_1$, $M_2$ are some constants. Then
	\begin{equation}\eqlab{J141}
	\log|f(z)| \le M_1 \omega_z(\gamma_1) +M_2 \omega_z(\gamma_2), \qquad z\in 
	\Omega,
	\end{equation}
	where $\omega_z$ is the harmonic measure on the boundary: 
	for $E\subset\partial \Omega$,
	\[
	\omega_z(E) = \int_E P_z(\zeta)|d\zeta|,
	\]
	$P_z(\zeta)$ being the Poisson kernel for $\Omega$.
	The inequality \eqref{J141} is now obvious, since $\log|f|$
	is subharmonic, and so, does not exceed its Poisson integral.
	\bigskip
	
	In remark 5.11 I wanted to apply this principle to the case
	where $\Omega$ is the angle $\{\Re z>0, \Im z>1\}$.
	Rather, let us consider the function $\log|E(z+i)|$
	in the angle $\{\Re z>0, \Im z>0\}=:\Omega$. The Poisson kernel for this
	angle is well-known and may be obtained from the half-plane case 
	by the square root transform. The harmonic function $U$ in $\Omega$
	(with some growth restrictions at infinity, such as $U(z) = o(|z|^2)$)
	may be represented as 
	\begin{equation}
	\eqlab{d00}
	U(z) =  \frac{4xy}{\pi} 
	\int\limits_0^{\infty} \frac{ U(s)}{|s^2-z^2|^2} \,s\, ds +
	\frac{4xy}{\pi} 
	\int\limits_0^{\infty} \frac{ U(is)}{|s^2+z^2|^2} \,s\, ds, 
	\end{equation}
	$z=x+iy$, $x>0$, $y>0$.
	
	Let $U(z)=\log|E(z+i)|$. We know that $U(s)>-c\log s$
	and, since, $E$ is not a polynomial, for any $N>0$,
	we have $U(is)> N\log s$, $s> A(N)$. 
	
	Now let $z\in D$, where $D$ is our ray, that is, 
	$z=x+i\gamma x$, where $\gamma>0$. Then 
	\[
	|s^2-z^2| = |s^2- (1-\gamma^2)x^2 +2i\gamma x^2|
	\] 
	and it follows that 
	\[
	c_1(\gamma)(x^2+s^2)<|s^2-z^2| \le c_2(\gamma)(x^2+s^2).
	\]
	for some positive constants. Analogously, there exist constants
	(without loss of generality, the same) such that
	\[
	c_1(\gamma)(x^2+s^2)<|s^2+z^2| \le c_2(\gamma)(x^2+s^2), \quad z\in D.
	\]
	Then
	\[
	\log|E(z+i)| \ge 
	\frac{4\gamma x^2}{\pi} 
	\int\limits_0^{A(N)} \frac{ U(s)}{|s^2-z^2|^2} \,s\, ds +
	\frac{4\gamma x^2}{\pi} 
	\int\limits_0^{A(N)} \frac{ U(is)}{|s^2+z^2|^2} \,s\, ds 
	\]
	\[
	+ \frac{4\gamma x^2}{\pi} \bigg(\frac{N}{c_2(\gamma}-\frac{1}{c_1(\gamma)}  \bigg)
	\int\limits_{A(N)}^\infty \frac{s\log s}{(s^2+x^2)^2}\, ds 
	\]
	(we used the estimates from below for $U(s)$ and $U(is)$).
	Now fix $N$  such that the constant 
	$\big(\frac{N}{c_2(\gamma}-\frac{1}{c_1(\gamma)}  \big)$  
	is positive. The integrals from $0$ to $A(N)$ for a fixed $N$
	are bounded as $x\to\infty$. 
	Finally, note that, for $x>A(N)$, 
	\[
	x^2 \int\limits_{A(N)}^\infty \frac{s\log s}{(s^2+x^2)^2}\, ds
	\ge 
	x^2 \int\limits_{x}^\infty \frac{s\log s}{(s^2+x^2)^2}\, ds 
	\ge \frac{x^2}{2} \int\limits_{x}^\infty \frac{\log s}{s^3}\, ds
	\asymp \log x. 
	\]
	This implies, that $\log|E(x+i\gamma x+i)| \ge c\log x$, $x\to \infty$.
}
\end{remark}

\appendix
\makeatletter
\renewcommand{\@seccntformat}[1]{\@nameuse{the#1}.\quad}
\makeatother
\appendixsection{Estimates of inner functions on horizontal rays}

In this appendix we prove a theorem about 
asymptotic behavior of inner functions along horizontal rays.
It is well known that for any ray 
$D := e^{i\pi\beta}[0,\infty)$ with $0<\beta<1$, the estimate 
\[
	|e^{2i\alpha}-\Theta(z)|\lesssim |z+i|^{-1}, \qquad z\in D,
\]
is equivalent to $e^{2i\alpha} -\Theta \in\mc H(E)$. 
If $\Theta= E^{-1} E^\#$ is a meromorphic inner function
the latter condition means that $e^{i\alpha}E -e^{-i\alpha}E^\# 
\in\mc H(E)$. For the de Branges space setting see 
\cite[Theorem 22]{debranges:1968}, the case of general inner functions
is discussed, e.g, in \cite{baranov:2001}.
We show that an analogous and even stronger statement 
is true for the rays $iy_0 + [0,\infty)$, $y_0 > 0$. 

Each inner function $\Theta$ generates a 
model subspaces $\mc K_{\Theta}=H^2\ominus\Theta H^2$ of $H^2$. 

\begin{theorem}{A60}
	Let $\Theta$ be an inner function in $\bb C^+$ and let 
	$y_0>0$. Assume that there exists a function 
	$f\in\mc K_\Theta$ and a positive constant $C$, such that 
	\begin{equation}\label{A61}
		\big|f(x+iy_0)+1-\Theta(x+iy_0)\big|\leq\frac C{|x+iy_0|},\quad x>0
		\,.
	\end{equation}
	Then $1-\Theta\in\mc K_\Theta$. 
\end{theorem}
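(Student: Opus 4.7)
The plan is to reduce the problem to an $H^2$-integrability statement and then exploit the hypothesis via a weak-type estimate. First I would observe that $1-\Theta \in \mc K_\Theta$ is equivalent to $1-\Theta \in H^2(\bb C^+)$. Indeed, the general characterization $h \in \mc K_\Theta \iff h \in H^2 \text{ and } \bar\Theta h \in \overline{H^2}$, applied to $h = 1 - \Theta$, combined with the identity $\bar\Theta (1-\Theta) = \bar\Theta - 1 = -\overline{1-\Theta}$ on $\bb R$, shows that both conditions coincide with $\int_\bb R |1-\Theta(x)|^2\,dx<\infty$.

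Next, I would set $g := f + 1 - \Theta$, which is analytic in $\bb C^+$. The hypothesis \eqref{A61} gives $|g(x+iy_0)|^2 \leq C^2/(x^2+y_0^2)$, hence $g(\cdot + iy_0)\in L^2((0,\infty))$. Since $f \in \mc K_\Theta \subset H^2$ yields $f(\cdot+iy_0)\in L^2(\bb R)$, we obtain $(1-\Theta)(\cdot+iy_0) = g(\cdot+iy_0) - f(\cdot+iy_0) \in L^2((0,\infty))$. To promote this to integrability of $|1-\Theta|^2$ on all of $\bb R$, I would use the Poisson-invariance
\[
  \int_\bb R \bigl(1-\Re\Theta(x+iy_0)\bigr)\,dx = \int_\bb R \bigl(1-\Re\Theta(x)\bigr)\,dx = \tfrac12 \int_\bb R |1-\Theta(x)|^2\,dx,
\]
together with the $\bb C^+$-identity $2(1-\Re\Theta) = |1-\Theta|^2 + (1-|\Theta|^2)$, reducing the task to bounding the two nonnegative horizontal-line integrals $\int_\bb R |1-\Theta(x+iy_0)|^2\, dx$ and $\int_\bb R (1-|\Theta(x+iy_0)|^2)\,dx$.

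The hard part is propagating the $L^2$-control from the positive ray at height $y_0$ to the full real line at that height, and likewise for the defect $1 - |\Theta|^2$. On the negative half-line the hypothesis offers nothing, and pointwise $(1-\Theta)(\cdot+iy_0)$ is known only to be bounded by $2$. This is where the weak-type estimate suggested by Poltoratski enters: the analyticity and boundedness of $1-\Theta$ in $\bb C^+$ combined with the sharp $1/|z|$-decay of $g$ on one ray should force, via a weak-$L^1$ bound for an appropriate Cauchy or Riesz projection acting on truncations of $g$, a distribution-function estimate $|\{x : |(1-\Theta)(x+iy_0)| > \lambda\}| = O(1/\lambda)$ that, together with the $L^\infty$-bound, yields $L^2$-integrability on $\bb R+iy_0$.

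The principal obstacle is precisely this last step. A Phragmén--Lindel\"of argument in the quadrant $\{\Re z > 0,\, \Im z > y_0\}$ only upgrades the hypothesis to boundedness of $g$ in that quadrant---too weak to control the $L^2$-norm on the negative half-line. The weak-type inequality is therefore essential, and identifying the precise operator (most naturally a Hilbert-transform type bound applied to $g(\cdot+iy_0)\cdot \chi_{(0,\infty)}$, or a Clark-measure estimate for $\Theta$) is the main technical content of the proof.
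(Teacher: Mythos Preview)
Your initial reductions are correct, and you accurately isolate the core difficulty: the hypothesis lives only on the positive ray at height $y_0$, and nothing obvious propagates it to the full line. But the proposal stops precisely there. Your first concrete suggestion---a weak-$L^1$ bound for the Hilbert transform of $g(\cdot+iy_0)\chi_{(0,\infty)}$---does not lead anywhere visible: that transform is not related to $1-\Theta$ on the negative half-line, and combining the trivial $L^\infty$ bound on $1-\Theta$ with a weak-$L^1$ bound on an unrelated function yields nothing.

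The missing idea is your second, parenthetical suggestion, and it is the entire engine of the proof. One argues by contradiction: if $1-\Theta\notin\mc K_\Theta$, the Clark measure $\mu$ of $\Theta$ at the point $1$ satisfies $\int(1+t^2)^{-1}\,d\mu<\infty$ with no mass at infinity. Poltoratski's representation then gives the factorization
\[
	f(z)=\frac{1-\Theta(z)}{2\pi i}\int_{\bb R}\frac{f(t)}{t-z}\,d\mu(t),\qquad f\in\mc K_\Theta.
\]
After truncating $\mu$ away from the origin so that $\int|f(t)/t|\,d\mu_\epsilon<\epsilon$, one rewrites the Cauchy integral as $\gamma_\epsilon(z)=\int\frac{f(t)}{t}\,d\mu_\epsilon+z\int\frac{f(t)}{t(t-z)}\,d\mu_\epsilon$, so that $f_\epsilon=(1-\Theta)\gamma_\epsilon/i$. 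The Kolmogorov weak-type estimate is applied to $\gamma_\epsilon$ (a Cauchy transform of a measure of total variation $<\epsilon$), yielding $|\gamma_\epsilon(x+iy_0)|\le\tfrac12$ on a subset of $[r,2r]$ of measure at least $r/2$; on that set the hypothesis forces $|1-\Theta(x+iy_0)|\lesssim|x+iy_0|^{-1}$. A separate lemma (again via Kolmogorov, now applied to $q=(1+\Theta)/(1-\Theta)$) shows that such positive-density smallness of $1-\Theta$ is incompatible with $q\in\mc C_1$, contradicting $1-\Theta\notin\mc K_\Theta$. The Clark-measure factorization is not optional here: it is what converts one-sided control of $f+1-\Theta$ into control of $1-\Theta$ alone, and your proposal does not supply any substitute for it.
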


For the proof of this result we will combine weak type estimates for 
the Hilbert transform and properties of the Clark measures. 
We will throughout this appendix keep the following notation: 

\begin{enumerate}[$(i)$]
\item The Lebesgue measure on $\bb R$ is denoted by $m$. Moreover, $\Pi$ denotes the Poisson measure on $\bb R$, that is 
	$d\Pi(t)=(1+t^2)^{-1}dm(t)$. 
\item The set of all functions $q$ which are defined and analytic in $\bb C^+$ and have nonnegative real part 
	throughout this half-plane is denoted by $\mc C$. 
\item Recall that a function $q$ belongs to $\mc C$ if and only if it has an integral 
	representation of the form 
	\begin{equation}\label{A62}
		q(z)=-ipz+i\Im q(i)+\frac i\pi\int_{\bb R}\Big(\frac 1{z-t}+\frac t{1+t^2}\Big)d\mu(t)
		\,,
	\end{equation}
	where 
	\[
		p\geq 0,\qquad \mu\ \text{is a positive Borel measure},\ \  
	        \int_{\bb R}\frac{d\mu(t)}{1+t^2}<\infty
		\,.
	\]
	The data $p$ and $\mu$ in this representation are uniquely determined by 
	the function $q$ (see, e.g., \cite[5.3,5.4]{rosenblum.rovnyak:1994}).  
	Note that, if the function $q$ has a continuous extension to the closed half-plane $\bb C^+\cup\bb R$, 
	then the measure $\mu$ is absolutely continuous with respect to $m$ and 
	\[
		d\mu(t)=\Re q(t)\,dm(t)
		\,.
	\]
\item Two subclasses of $\mc C$ are defined as 
	\[
		\mc C_1:=\big\{q\in\mc C:\, p= \lim_{y\to+\infty}\frac 1y\Re q(iy)=0\big\}
		\,,
	\]
	\[
		\mc C_0:=\big\{q\in\mc C:\ \text{the limit}\, 
                \lim_{y\to+\infty}yq(iy)\,\ \text{exists} \big\}
		\,.
	\]
\end{enumerate}
Recall that $q\in\mc C_0$ if and only if in \eqref{A62} we have 
\[
	p=0,\quad \int_{\bb R}d\mu(t)<\infty,\quad 
        \Im q(i)=-\frac 1\pi\int_{\bb R}\frac t{1+t^2}\,d\mu(t)
	\,,
\]
i.e.,\ if and only if $q$ can be represented in the form 
\[
	q(z)=\frac i\pi\int_{\bb R}\frac{d\mu(t)}{z-t}
\]
with a finite positive Borel measure $\mu$, see e.g.\ \cite[Theorem 6.4]{gorbachuk.gorbachuk:1997}. In this case we have 
\[
	\int_{\bb R}d\mu(t)=\pi\lim_{y\to\infty}yq(iy)
	\,.
\]
Weak type estimates enter the discussion in the form \leref{A63} below, and can 
be used to conclude that $1-\Theta\in\mc K_\Theta$, cf.\ \leref{A64}. 

\begin{lemma}{A63}
	Let $y_0>0$ be given. 
	\begin{enumerate}[$(i)$]
	\item Whenever $q\in\mc C_1$, we have 
		\[
			\lim_{a\to+\infty} a\cdot\Pi\Big(\big\{x\in\bb R:\,|q(x+iy_0)|>a\big\}\Big)=0
			\,.
		\]
	\item There exists a positive constant $A$, such that 
		\[
			a\cdot m\Big(\big\{x\in\bb R:\,|q(x+iy_0)|>a\big\}\Big)\leq A\cdot\lim_{y\to+\infty}yq(iy)
			,\quad a>0,\ q\in\mc C_0
			\,.
		\]
	\end{enumerate}
\end{lemma}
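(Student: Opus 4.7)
My approach for both parts is to introduce the shifted function $q_1(z):=q(z+iy_0)$. This is a Herglotz function in the same subclass as $q$ (since $p_1=p$ and $\lim_{y\to\infty}yq_1(iy)=\lim_{y\to\infty}yq(iy)$, as one checks directly), but, crucially, analytic on a neighbourhood of $\overline{\bb C^+}$. Hence the Riesz--Herglotz measure of $q_1$ is absolutely continuous, with density
\[
f_1(x)\;=\;\Re q(x+iy_0)\;=\;\frac{1}{\pi}\int\frac{y_0\,d\mu(t)}{(t-x)^2+y_0^2}\;\ge\;0,
\]
and the non-tangential boundary values of $q_1$ on $\bb R$ equal $f_1(x)+i(H_\star f_1)(x)$, where $H_\star$ is the Hilbert transform (in part (i) in its renormalised form involving the $t/(1+t^2)$-correction). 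The problem thereby reduces to classical weak-type bounds for the Hilbert transform.

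For \emph{part (ii)}, a Fubini computation together with $\int P_{y_0}(x-t)\,dx=1$ for the Poisson kernel yields $\|f_1\|_{L^1(\bb R)}=\mu(\bb R)=\pi\lim_{y\to\infty}yq(iy)$. I then combine Markov's inequality for $f_1$ with Kolmogorov's weak-$(1,1)$ estimate for $Hf_1$:
\[
m\big(\{|q_1|>a\}\big)\;\le\;m\big(\{f_1>a/2\}\big)+m\big(\{|Hf_1|>a/2\}\big)\;\le\;\frac{2(1+C_K)\,\|f_1\|_{L^1}}{a},
\]
which gives the claim with $A=2\pi(1+C_K)$.

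For \emph{part (i)}, the total mass $\mu(\bb R)$ may be infinite, but $\tilde M:=\int d\mu(t)/(1+t^2)<\infty$. I plan a truncation argument. Given $\epsilon>0$, fix $N$ with $\epsilon_N:=\int_{|t|>N}d\mu(t)/(1+t^2)<\epsilon$ and decompose $q=c_N+q^{(N)}+r^{(N)}$, where $q^{(N)}\in\mc C_0$ is built from the finite measure $\mu|_{[-N,N]}$, $c_N$ is a real constant absorbing the $t/(1+t^2)$-correction, and $r^{(N)}$ is the tail. The identity
\[
\frac{1}{z-t}+\frac{t}{1+t^2}\;=\;\frac{1+tz}{(z-t)(1+t^2)}
\]
gives the pointwise bound $|r^{(N)}(z)|\lesssim(1+|z|)\epsilon_N$ for $|z|\le N/2$. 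Since $q^{(N)}(\cdot+iy_0)$ is bounded on $\bb R+iy_0$ (by $\mu([-N,N])/(\pi y_0)$), it lies in $L^1(\Pi)$, and dominated convergence applied to its level sets gives $a\,\Pi(\{|q^{(N)}|>a\})\to 0$ as $a\to\infty$ for each fixed $N$. Combining this with $\Pi(\{|x|>N/2\})=O(1/N)$, and with the ordering: fix $\epsilon$, fix $N=N(\epsilon)$, let $a\to\infty$, finally let $\epsilon\to 0$, should yield the claimed little-$o$ statement.

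The \emph{main obstacle} is producing the little-$o$ rather than just a uniform weak-$(1,1)$ bound in part (i): Kolmogorov-type estimates give only $a\cdot(\text{meas})\le C$. My strategy to upgrade this rests on exploiting the boundedness of each truncated piece $q^{(N)}$ on $\bb R+iy_0$: this turns the weak bound into a genuine $L^1(\Pi)$-bound whose level-set mass decays by dominated convergence, while the tail $\epsilon_N\to 0$ supplies the smallness required in the final $\epsilon\to 0$ step.
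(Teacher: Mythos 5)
Your argument is correct and is essentially what the paper does: write $q_1(z)=q(z+iy_0)$, note $\Re q_1\in L^1(\bb R)$ with $\|\Re q_1\|_{L^1}=\mu(\bb R)=\pi\lim_{y\to\infty}yq(iy)$, observe $\Im q_1$ is the Hilbert transform of $\Re q_1$, and combine Markov with Kolmogorov's weak-$(1,1)$ estimate. The paper uses the split $\{|\Re q_1|>a/\sqrt2\}\cup\{|\Im q_1|>a/\sqrt2\}$ rather than your $a/2$, but that is cosmetic.

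\textbf{Part (i) has a genuine gap.} After the decomposition $q=c_N+q^{(N)}+r^{(N)}$ you handle the bounded piece $c_N+q^{(N)}_1$ correctly (its level sets are empty for $a$ large). For the tail $r^{(N)}_1$, your estimate $|r^{(N)}_1(x)|\lesssim(1+|x|)\epsilon_N$ is valid only for $|x|\lesssim N$, and you propose to dispose of $\{|x|>N/2\}$ via $\Pi(\{|x|>N/2\})=O(1/N)$. But with your prescribed order of limits (fix $\epsilon$, fix $N=N(\epsilon)$, let $a\to\infty$) the contribution from this region is
\[
a\cdot\Pi\big(\{|r^{(N)}_1|>a/2\}\cap\{|x|>N/2\}\big)\le a\cdot O(1/N)\;\longrightarrow\;\infty
\]
as $a\to\infty$, so the argument does not close. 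Reversing the order, e.g.\ $N=N(a)\to\infty$ with $a$, forces you to compare $N\epsilon_N$ against $a$ in the inner region and $a/N$ in the outer one simultaneously, and since $\epsilon_N\to0$ with no prescribed rate this cannot be balanced in general. What is actually missing is a weak-type estimate for the tail itself: one can compute, via Fubini and Poisson-kernel composition, that $\|\Re r^{(N)}_1\|_{L^1(\Pi)}\lesssim_{y_0}\epsilon_N$, and then one needs the Kolmogorov weak-$(1,1)$ inequality for the $\Pi$-normalised conjugate (with the $t/(1+t^2)$ correction) to conclude $a\cdot\Pi(\{|r^{(N)}_1|>a\})\lesssim\epsilon_N$ uniformly in $a$. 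That inequality is precisely the ingredient your scheme tries to avoid, and without it the proof of (i) does not go through. The paper instead observes directly that $\Re q_1\in L^1(\Pi)$ (since $\Re q_1(x)=py_0+\frac{y_0}{\pi}\int\frac{d\mu(t)}{(t-x)^2+y_0^2}$), writes $\Im q_1-\Im q_1(i)$ as the $\Pi$-normalised conjugate of $\Re q_1$, and invokes the \emph{little-o} form of Kolmogorov's theorem on conjugate functions (Koosis, \emph{The logarithmic integral I}, p.~65, Corollary), which gives $a\cdot\Pi(\{|\Im q_1-\Im q_1(i)|>a\})\to0$ at once; the $\Re q_1$ part is then handled with dominated convergence exactly as you would expect. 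In short: your reduction to the shifted function and the recognition that $\Re q_1\in L^1(\Pi)$ are right, but you need to cite the Kolmogorov conjugate-function theorem for the tail (or directly, as the paper does); the truncation bookkeeping outlined does not replace it.
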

\begin{proof}
	Let $q\in\mc C$, and consider the function 
	\[
		Q(z):=q(z+iy_0),\qquad z\in\bb C^+\cup\bb R
		\,.
	\]
	Then $Q$ is continuous in $\bb C^+\cup\bb R$ and belongs to $\mc C$. 
	In particular, 
	\[
		\Re Q(x) = py_0 + \frac{y_0}{\pi} \int_{\bb R} \frac{d\mu(t)}{(t-x)^2+y_0^2},
	\]
	and it is easy to see that $\Re Q(x)\in L^1(\Pi)$. 
	Moreover, $Q\in\mc C_1$ (or $Q\in\mc C_0$) if and only if $q$ has the respective property. 	

	For the proof of $(i)$, assume that $q\in\mc C_1$. Then we have 
	\[
	\begin{aligned}
		\Im Q(x)-\Im Q(i) & =\lim_{y\searrow 0}\Im Q(x+iy)-\Im Q(i) \\
		& =\lim_{y\searrow 0}\frac 1\pi\int_{\bb R}\Big(\frac{x-t}{(x-t)^2+y^2}+\frac t{1+t^2}\Big)\Re Q(t)\,dt
		\,.
	\end{aligned}
	\]
	By Kolmogorov's Theorem on the harmonic conjugate, to be more specific by \cite[p. 65, Corollary]{koosis:1988}, 
	we have 
	\[
		\lim_{a\to+\infty}a\cdot\Pi\Big(\big\{x\in\bb R:\,|\Im Q(x)-\Im Q(i)|>a\big\}\Big)=0
		\,.
	\]
	Since also 
	\[
		a\cdot\Pi\Big(\big\{x\in\bb R:\,|\Re Q(x)|>a\big\}\Big)\leq\int_{\bb R}\chi_{\{\Re Q>a\}}\Re Q\,d\Pi
		\stackrel{a\to+\infty}{\longrightarrow} 0
		\,,
	\]
	the desired limit relation follows. 
	
	For the proof of $(ii)$, assume that $q\in\mc C_0$. Then we have $\Re Q(x)\in L^1(m)$, and 
	\[
		Q(z)=\frac i\pi\int_{\bb R}\frac{\Re Q(t)}{z-t}\,dm(t),\qquad z\in\bb C^+
		\,.
	\]
	This shows that $\Im Q(x)$ is the standard Hilbert transform of $\Re Q(x)$. Thus, by 
	\cite[V, Lemma 2.8]{stein.weiss:1971}, we have the weak type estimate 
	\[
		m\Big(\big\{x\in\bb R:\,|\Im Q(x)|>a\big\}\Big)\leq\frac ea\int_{\bb R}\Re Q(t)\,dm(t)=
		\frac ea \pi\lim_{y\to+\infty}yq(iy)
		\,,
	\]
	where $e$ is the Euler number. Since 
	\[
		m\Big(\big\{x\in\bb R:\,|\Re Q(x)|>a\big\}\Big)\leq\frac 1a\int_{\bb R}\chi_{\{\Re Q>a\}}\Re Q\,dm\leq
		\frac{\pi}a\lim_{y\to+\infty}yq(iy)
		\,,
	\]
	we obtain the desired estimate, e.g.\ with the constant $A:=\pi \sqrt 2(1+e)$. 
\end{proof}

\begin{lemma}{A64}
	Let $\Theta$ be an inner function in $\bb C^+$, and let $y_0>0$. Assume that there exist 
	positive constants $c,c'$, and $r_0$, such that 
	\begin{equation}\label{A65}
		m\Big(\Big\{x\in[r,2r]:\,|1-\Theta(x+iy_0)|\leq\frac c{|x+iy_0|}\Big\}\Big)\geq c'r,\qquad r\geq r_0
		\,.
	\end{equation}
	Then $1-\Theta\in\mc K_\Theta$. 
\end{lemma}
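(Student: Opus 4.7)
The plan is to work with the Herglotz-class function
\[
q(z) := \frac{1+\Theta(z)}{1-\Theta(z)},
\]
which lies in $\mc C$ since $\Re q(z) = (1-|\Theta(z)|^2)/|1-\Theta(z)|^2 \ge 0$ in $\bb C^+$. Writing its representation \eqref{A62} with parameters $p \ge 0$, $\Im q(i)$, and positive Borel measure $\mu$, my target $1-\Theta \in \mc K_\Theta$ reduces to showing $\int_{\bb R}|1-\Theta(t)|^2\,dm(t) < \infty$: the function $1-\Theta$ is already analytic and bounded in $\bb C^+$, and orthogonality to $\Theta H^2$ is an automatic consequence of membership in $H^2$ via the Fourier-support description of $H^2(\bb C^+)$. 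Since $|\Theta|=1$ a.e.\ on $\bb R$, $q(t) = iv(t)$ is purely imaginary there, so $|1-\Theta(t)|^2 = 4/(1+v(t)^2)$, and the task further reduces to $\int dm(t)/(1+v(t)^2) < \infty$.

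First I would translate the smallness hypothesis into a largeness statement about $|q|$. On $E_r \subseteq [r,2r]$ with $m(E_r) \ge c'r$ and $|1-\Theta(x+iy_0)| \le c/|x+iy_0|$, we have $|1+\Theta(x+iy_0)| \ge 2-c/r \ge 1$ for $r$ large, whence $|q(x+iy_0)| \ge |x+iy_0|/c \ge r/c$ on $E_r$. Since $\Pi(E_r) \asymp m(E_r)/r^2 \gtrsim c'/r$, setting $a := r/(2c)$ yields
\[
a \cdot \Pi\big(\big\{x \in \bb R : |q(x+iy_0)| > a\big\}\big) \gtrsim \frac{c'}{2c},
\]
which does not tend to zero as $a\to\infty$. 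By the contrapositive of \leref{A63}$(i)$, $q \notin \mc C_1$, so $p > 0$.

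Since $|\Theta(z)|<1$ in $\bb C^+$, we have $\Theta\neq -1$ there, so $q$ has no zeros in $\bb C^+$ and $1/q$ also belongs to $\mc C$. From the Herglotz formula one computes $q(iy) = py + O(1)$ as $y\to\infty$, hence $\lim_{y\to\infty} y\cdot(1/q)(iy) = 1/p$, i.e.\ $1/q \in \mc C_0$. Applying \leref{A63}$(ii)$ to $1/q$ gives the quantitative bound
\[
m\big(\big\{x : |q(x+iy_0)| < \varepsilon\big\}\big) \le \frac{A\varepsilon}{p}, \qquad \varepsilon > 0,
\]
which says that $|q|$ is mostly large at the level $\Im z = y_0$. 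The final step combines this with $p>0$ to reach $\bb R$: decomposing $v(t) = -pt + \tilde v(t)$ with $\tilde q := q + ipz \in \mc C_1$ and $\tilde v := \Im\tilde q$, one shows that $|\tilde v(t)|$ is small compared with $p|t|$ outside a set of finite Lebesgue measure, hence $|v(t)| \gtrsim p|t|$ there, which makes $1/(1+v(t)^2)$ integrable on $\bb R$.

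The main obstacle is precisely this last step: transferring the weak-type control from the horizontal line $\Im z = y_0$, where \leref{A63} directly applies, down to $\bb R$, where the $L^2$-integrability must be verified. Bridging the gap relies on the Poisson-integral representation linking the harmonic extension of $\mu$ at the two heights, together with the nontriviality $p>0$ which secures linear growth of the dominant term $-pt$ of $v(t)$ and overpowers the Hilbert-transform-of-$\mu$ correction $\tilde v(t)$.
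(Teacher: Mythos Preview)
Your argument coincides with the paper's through the crucial step: setting $q=(1+\Theta)/(1-\Theta)$, showing that on the sets $M_r$ one has $|q(x+iy_0)|\gtrsim r$ and hence $a\,\Pi(\{|q(\cdot+iy_0)|>a\})\gtrsim 1$, and concluding via \leref{A63}\,$(i)$ that $q\notin\mc C_1$, i.e.\ $p>0$. At that point the paper simply invokes the known equivalence between $p>0$ and $1-\Theta\in\mc K_\Theta$ (citing \cite{baranov:2001}) and stops.

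You instead try to prove this implication directly, and here there is a real gap. The claim that $|\tilde v(t)|\le\tfrac{p}{2}|t|$ outside a set of \emph{finite Lebesgue measure} is not justified and is in fact false in general. From $\tilde q\in\mc C_1$ the only control available on $\tilde v$ is Kolmogorov's estimate $a\,\Pi(\{|\tilde v|>a\})\to 0$; on a dyadic block $[2^k,2^{k+1}]$ this yields only $m(\{|\tilde v|>p2^{k-1}\})=o(2^k)$, which is not summable in $k$. Concretely, if the Clark measure is $\mu=\sum_{n\in\bb Z}\delta_n$ then $\tilde v(t)$ behaves like $\cot(\pi t)$, and near each integer $n$ the set $\{|\tilde v|>p|n|/2\}$ has length $\asymp |n|^{-1}$; the exceptional set has infinite measure even though $1-\Theta\in\mc K_\Theta$ in this example. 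Your detour through \leref{A63}\,$(ii)$ for $1/q\in\mc C_0$ is correct but gives information only on the line $\Im z=y_0$, and nothing in your outline transfers it to $\bb R$. A clean way to finish once $p>0$ is established is via reproducing kernels: since $1-|\Theta(iy)|^2\sim 4/(py)$, the functions $2\pi i(iy+i)k_{iy}\in\mc K_\Theta$ have bounded norm and converge pointwise to $1-\Theta$, hence weakly, forcing $1-\Theta\in\mc K_\Theta$.
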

\begin{proof}
	Consider the function 
	\[
		q(z):=\frac{1+\Theta(z)}{1-\Theta(z)},\qquad z\in\bb C^+
		\,.
	\]
	For $r>0$ set 
	\[
		M_r:=\Big\{x\in[r,2r]:\,|1-\Theta(x+iy_0)|
		\leq\frac c{|x+iy_0|}\Big\}
		\,.
	\]
	Then, by our hypothesis \eqref{A65}, we have 
	$m(M_r)\geq c'r$, $r\geq r_0$. 
	Assume that $a>1$ and let $x\in M_r$ with $r>ca$. Then 
	\[
	\begin{aligned}
		|q(x+iy_0)| &=\Big|\frac{1+\Theta(x+iy_0)}{1-\Theta(x+iy_0)}\Big|
		\geq\frac{2-|1\!-\!\Theta(x+iy_0)|}{|1\!-\!\Theta(x+iy_0)|}\geq \\
		& \geq \frac {2 |x+iy_0|}{c} -1 \ge 2a-1> a	
		\,,
	\end{aligned}
	\]
	since $x \geq r \geq ca$. Thus, we have
	\[
		M_r \subseteq\big\{x\in\bb R:\,|q(x+iy_0)|>a\big\},\qquad 
                a> 1, \ r\ge ca
		\,.
	\]
	It follows that for $a> 1$, $r\ge ca$, 
	\[
		\Pi\Big(\big\{x\in\bb R:\,|q(x+iy_0)|>a\big\}\Big)\geq\Pi(M_r)=
		\int_{M_r}\frac{dm(t)}{1+t^2}\geq
	\]
	\begin{equation}\label{A66}
		\geq\frac 1{1+4r^2}m(M_r)\geq\frac{c'r}{1+4r^2}
		\,.
	\end{equation}
        If $a \geq\frac{r_0}{c}$, then $r:=ca \geq r_0$, and we may 
        use this particular value of $r$ in \eqref{A66}. It follows that,
	for $a>\max (1, r_0/c)$, 
	\[
		\Pi\Big(\big\{x\in\bb R:\,|q(x+iy_0)|>a\big\}\Big)\geq
                \frac{c'c a}{1+4c^2 a^2} \geq \frac{d}{a}
		\,,
	\]
	where the constant $d$ depends only on $c$ and $c'$.

	The function $q$ is analytic in $\bb C^+$ and has nonnegative real part throughout this half-plane. 
	By \leref{A63}, $(i)$, it cannot belong to the subclass $\mc C_1$, i.e.,\ we have 
	\[
		\lim_{y\to+\infty}\frac 1yq(iy)>0
		\,.
	\]
	However, this property of $q$ is, e.g.\ by the discussion in \cite{baranov:2001}, equivalent to $1-\Theta$ belonging 
	to $\mc K_\Theta$. 
\end{proof}

\begin{proofof}{\thref{A60}}
	Assume on the contrary that $1-\Theta\not\in\mc K_\Theta$. Our aim is to show that, under the 
	assumptions of the theorem, the relation \eqref{A65} holds for some appropriate values of $c,c'$ and $r_0$. 
	Once this has been achieved, \leref{A64} implies that $1-\Theta\in\mc K_\Theta$, and we 
	have derived a contradiction. 

	The function $1-\Theta$ does not belong to $\mc K_\Theta$
	if and only if $q= \frac{1+\Theta}{1-\Theta}$ is in $\mc C_1$, that is, 
	$p=0$ in \eqref{A62}. Thus,
	\[
		\frac{1+\Theta(z)}{1-\Theta(z)} =i\Im q(i)+
                \frac i\pi\int_{\bb R}\Big(\frac 1{z-t}+\frac t{1+t^2}\Big)d\mu(t)
		\,,
	\]
        The measure $\mu$, called the Clark measure, has many 
        important properties (see, e.g., 
        \cite[Vol. 2, Part D, Chapter 4]{nikolski:2002}).
	In particular, it was shown in \cite{poltoratski:1993}
	that each function $f\in \mc K_\Theta$ has radial boundary values
	$\mu$-a.e. and the restriction operator
	$f\mapsto f|_{{\rm supp}(\mu)}$ is a unitary operator
	from $\mc K_\Theta$ onto $L^2(\mu)$. Note that
	$\Theta =1$ $\mu$-a.e. on ${\rm supp}(\mu)$.

	For $z\in \bb C^+$ denote by $k_z$ the reproducing kernel of $\mc K_\Theta$,
	\[
		k_z(\zeta) = \frac{i}{2\pi}\cdot 
		\frac{1-\overline{\Theta(z)}\Theta(\zeta)}{\zeta-\overline z} \,.
	\]
	Then, for $f\in \mc K_\Theta$ and $z\in \bb C^+$, we have
	\[
		f(z) = (f, k_z)_{L^2(\mu)} = \frac{1-\Theta(z)}{2\pi i} 
		\int_{\bb R} \frac{f(t)}{t-z}\, d\mu(t) \,,
	\]
	since $\Theta = 1$ $\mu$-a.e. 

	Now let $f\in K_\Theta$ be a function as in the hypothesis
	of \thref{A60}. Note that, for each $M>0$, there exists 
        a positive constant $C_M$ such that 
	\begin{equation}\label{A68}
		\bigg|  \int_{[-M, M]} \frac{f(t)}{t-x-iy_0}\, d\mu(t) \bigg|
                \leq\frac{C_M}{|x+iy_0|},\qquad x\in\bb R
		\,.
	\end{equation}
	
	Let $\epsilon>0$ be fixed. We have $f\in L^2(\mu)$
	and $(|t|+1)^{-1} \in L^2(\mu)$ and so
	$(|t|+1)^{-1} f \in L^1(\mu)$.
	Using \eqref{A68}, we obtain that there exists $M_\epsilon>0$ and $C_\epsilon>0$ such that the function 
	($\mu_\epsilon := \frac{1}{2\pi} \mu|_{\bb R \setminus [-M_\epsilon, M_\epsilon]}$) 
	\begin{equation}\label{A69}
		f_\epsilon(z) := 
		\frac{1-\Theta(z)}{i} 
		\int_{\bb R} \frac{f(t)}{t-z}\, d\mu_\epsilon(t) \,,
	\end{equation}
	satisfies 
	\begin{enumerate}[(1)]
	\item $\int_{\bb R}\frac{|f(t)|}{|t|}\,d\mu_\epsilon(t) <\epsilon$, 
	\item $|f_\epsilon(x+iy_0)+1-\Theta(x+iy_0)|\leq\frac{C_\epsilon}{|x+iy_0|}$, $x>0$.
	\end{enumerate}
	For the time being, let $\epsilon>0$ be arbitrary; we will make a particular 
        choice later. 

	The representation \eqref{A69} of the function $f_\epsilon$ may be rewritten as 
	\begin{equation}\label{A72}
		f_\epsilon(z)=
		\frac{1-\Theta(z)}{i} 
		\bigg(\underbrace{\int_{\bb R} \frac{f(t)}{t}\, d\mu_\epsilon(t)
		+ z \int_{\bb R} \frac{f(t)}{t}\cdot 
		\frac{d\mu_\epsilon(t)}{t-z}}_{=:\gamma_\epsilon(z)}\bigg)
		\,.
	\end{equation}
	Let $u(t) = \Re \frac{f(t)}{t}$ and let $u_+ = \max\{u,0\}$, $u_-=u_+ - u$. Set 
	\begin{equation}\label{A70}
		q(z) := \frac 1i \int_{\bb R} \frac{u_+(t)}{t-z}\,d\mu_\epsilon(t)
		\,.
	\end{equation}
	Then $q\in\mc C_0$ and 
	\[
		\lim_{y\to+\infty}yq(iy)= \int u_+(t)\, d\mu_\epsilon(t)
                \leq \epsilon
		\,.
	\]
	Using \leref{A63}, $(ii)$, we obtain 
	\[
		m\Big(\big\{x\in\bb R:\,|q(x+iy_0)|>a\big\}\Big)\leq \frac Aa 
                \epsilon,\qquad a>0
		\,.
	\]
	The same argument applies when we take $u_-$, as well as
        $v_+$ and $v_-$ for $v(t) = \Im \frac{f(t)}{t}$, instead of $u_+$ 
        in the definition \eqref{A70} of $q$. Altogether, we conclude that 
	\[
		m\Big(\big\{x\in\bb R:\,
                \bigg|\int_{\bb R} \frac{f(t)}{t}\cdot 
		\frac{d\mu_\epsilon(t)}{t-z}\bigg|>a\big\}\Big)\leq \frac{16A}{a}
		\epsilon,\quad a>0
		\,.
	\]
	Let $r>0$ and $x\in[r,2r]$ be given, then 
	\[
		|\gamma_{\epsilon}(x+iy_0)|\leq
		\underbrace{\int_{\bb R} \frac{|f(t)|}{|t|}\, 
		d\mu_\epsilon(t)}_{\leq \epsilon}
		+ (2r+y_0) \bigg| \int_{\bb R} \frac{f(t)}{t}\cdot 
		\frac{d\mu_\epsilon(t)}{t-z}\bigg|
		\,.
	\]
	It follows that, for any $r>0$ and $a>0$,  
	\begin{multline*}
		m\Big(\big\{x\in[r,2r]:\,|\gamma_{\epsilon}(x+iy_0)|>\epsilon+
                (2r+y_0)a\big\}\Big)\leq\\
		\leq m\bigg(\Big\{x\in [r,2r]:\,\Big|
                \int_{\bb R} \frac{f(t)}{t}\cdot \frac{d\mu_\epsilon(t)}{t-z}\Big|
		>a\Big\}\bigg) \leq\frac {16A}{a}\epsilon
			\,,
	\end{multline*}
	and hence 
	\[
		m\Big(\big\{x\in[r,2r]:\,|\gamma_{\epsilon}(x+iy_0)|\leq 
                \epsilon+(2r+y_0) a \big\}\Big)\geq
		r-\frac{16A}{a}\epsilon
	\]
	Assume that $r\geq y_0$, and use this inequality for the 
        particular value $a:=\frac{\sqrt\epsilon}r$ of $a$. Then 
	it follows that 
	\begin{equation}\label{A71}
		m\Big(\big\{x\in[r,2r]:\,|\gamma_{\epsilon}(x+iy_0)|
                \leq \epsilon+3\sqrt\epsilon\big\}\Big)\geq
		r(1-16A\sqrt\epsilon)
		\,.
	\end{equation}
	At this point we make a particular choice of $\epsilon$, namely, 
        we take $\epsilon>0$ so small that 
	$\epsilon+3\sqrt\epsilon\leq\frac 12$ and $16A\sqrt\epsilon
        \leq\frac 12$. Then \eqref{A71} gives 
	\[
		m\Big(\Big\{x\in[r,2r]:\,|\gamma_{\epsilon}(x+iy_0)|\leq \frac 12\Big\}\Big)\geq\frac 12r,\qquad r\geq y_0
		\,.
	\]
	However, if $x\in[r,2r]$ is such that 
        $|\gamma_{\epsilon}(x+iy_0)|\leq \frac 12$, then by 
        the hypothesis \eqref{A61} and the relation \eqref{A72} we obtain that 
	\[
		|1-\Theta(x+iy_0)|=\frac{|f_\epsilon(x+iy_0)+1-\Theta(x+iy_0)|}
                {|1-i\gamma_\epsilon(x+iy_0)|}
		\leq\frac{2C_\epsilon}{|x+iy_0|}
		\,.
	\]
	We conclude that, for $r\ge y_0$, 
	\begin{multline*}
		m\Big(\Big\{x\in[r,2r]:\,|1-\Theta(x+iy_0)|\leq\frac{2C}{|x+iy_0|}\Big\}\Big)\geq \\
		\geq m\Big(\Big\{x\in[r,2r]:\,|\gamma_{\epsilon}(x+iy_0)|\leq \frac 12\Big\}\Big)
			\geq\frac 12r
			\,,
	\end{multline*}
	i.e.\ \eqref{A65} holds. 
\end{proofof}

\appendixsection{Summary of results}

Let $\mc H=\mc H(E)$ be a de~Branges space and let $\mc L=\mc H(E_1)\in\Sub\mc H$. 

%
%
\begin{flushleft}
	\textbf{a. Necessary conditions for $\mc L=\mc R_{\mf m}(\mc H)$.}
\end{flushleft}
\vspace*{0mm}
\[
	\begin{array}{l|l}
		D & \parbox{25mm}{condition on $\mc L$} \\ \hline\hline
		w\in\bb R\setminus\qu D \rule{0pt}{5mm} & \mf d_{\mc L}(w)=\mf d_{\mc H}(w)\\[2mm] \hline
		\parbox[t]{60mm}{$\Im z\leq \psi(\Re z)$, $z\in D$,\\[1mm]
			$\psi$ positive, even, increasing on $[0,\infty)$,\\[1mm] 
			$\int_0^\infty(t^2+1)^{-1}\psi(t)\,dt<\infty$} \rule{0pt}{5mm} & 
			\mt_{\mc H}\mc L=0 \\[13mm] \hline
	\end{array}
\]

%
%
\newpage
\begin{flushleft}
	\textbf{b. Sufficient conditions for $\mc L=\mc R_{\mf m}(\mc H)$.}
\end{flushleft}
\vspace*{0mm}
\[
	\begin{array}{l|l|c|c}
		D & \parbox{25mm}{representation of $\mc L$} & \parbox{20mm}{assumption on $\mc L$} & 
			\parbox{20mm}{assumption on $\mc H$} \\[3mm] \hline\hline
		\bb R\cup i[0,\infty) \rule{0pt}{5mm} & \mc L=\mc R_{\nabla_{\!\mc L}|_D}(\mc H) & 
			\raisebox{1mm}{\rule{10mm}{0.5pt}} & 
			\raisebox{1mm}{\rule{10mm}{0.5pt}} \\[1mm]
		\parbox[t]{28mm}{$\bb R\cup e^{i\pi\beta}[0,\infty)$\\ $\beta\in(0,1)$} \rule{0pt}{5mm} & 
			\mc L=\mc R_{\mf m_{E_1}|_D}(\mc H) & \raisebox{1mm}{\rule{10mm}{0.5pt}} & 
			\raisebox{1mm}{\rule{10mm}{0.5pt}} \\[6mm] \hline

		i[h,\infty),\ h>0 \rule{0pt}{5mm} & \mc L=\mc R_{\nabla_{\!\mc L}|_D}(\mc H) & \mf d_{\mc L}=\mf d_{\mc H} & 
			\raisebox{1mm}{\rule{10mm}{0.5pt}} \\[2mm]
		\parbox[t]{28mm}{$e^{i\pi\beta}[h,\infty),\ h>0,$\\ $\beta\in(0,\frac 12)\cup(\frac 12,1)$} \rule{0pt}{5mm} & 
			\mc L=\mc R_{\nabla_{\!\mc L}|_D}(\mc H) & 
			\mf d_{\mc L}=\mf d_{\mc H} & \parbox[t]{25mm}{order $(2-2\beta)^{-1}$ zero type} \\[6mm] \hline

		\bb R \rule{0pt}{5mm} & \mc L=\mc R_{\mf m_{E_1}|_D}(\mc H) & \mt_{\mc H}\mc L=0 & 
			\raisebox{1mm}{\rule{10mm}{0.5pt}}\\[2mm]
		& \mc L\!\subseteq\mc R_{\nabla_{\!\mc L}|_D}(\mc H)\subseteq\!\breve{\mc L} & 
			\parbox[t]{21mm}{$\mt_{\mc H}\mc L=0$,\\ $\sup_{\bb R}\varphi_{E_1}'\!<\!\infty$} & 
			\raisebox{1mm}{\rule{10mm}{0.5pt}}\\[7mm]
		& \mc L=\mc R_{\nabla_{\!\mc L}|_D}(\mc H) & \mf d_{\mc L}=\mf d_{\mc H} & 
			\parbox[t]{25mm}{$\sup_{\bb R}\varphi_E'\!<\!\infty$,\\ $\forall\mc K:\mc K=\breve{\mc K}$} \\[6mm] \hline

		\bb R+ih,\ h>0 \rule{0pt}{5mm} & \mc L=\mc R_{\mf m_{E_1}|_D}(\mc H) & 
			\parbox[t]{17mm}{$\mf d_{\mc L}=\mf d_{\mc H}$,\\ $\mt_{\mc H}\mc L=0$} & 
			\raisebox{0mm}{\rule{10mm}{0.5pt}} \\[6mm]
		& \mc L\!\subseteq\mc R_{\nabla_{\!\mc L}|_D}(\mc H)\subseteq\!\breve{\mc L} & 
			\parbox[t]{17mm}{$\mf d_{\mc L}=\mf d_{\mc H}$,\\ $\mt_{\mc H}\mc L=0$} & 
			\raisebox{0mm}{\rule{10mm}{0.5pt}} \\[6mm] \hline
			
		\parbox[t]{29mm}{$iy_0{\scriptstyle+}[h,\infty),\ h\in\bb R$,\\ $y_0\geq 0$} \rule{0pt}{5mm} & 
			\mc L=\mc R_{\mf m_{E_1}|_D}(\mc H) & 
			\mf d_{\mc L}=\mf d_{\mc H} & \parbox[t]{25mm}{order $\frac 12$\\ zero type} \\[6mm] \hline
	\end{array}
\]



{\footnotesize
\begin{flushleft}
	A.\,Baranov\\
	Department of Mathematics and Mechanics\\
	Saint Petersburg State University\\
	28, Universitetski pr.\\
	198504 Petrodvorets\\
	RUSSIA\\
	email: a.baranov@ev13934.spb.edu\\[5mm]
\end{flushleft}
\begin{flushleft}
	H.\,Woracek\\
	Institut f\"ur Analysis und Scientific Computing\\
	Technische Universit\"at Wien\\
	Wiedner Hauptstr.\ 8--10/101\\
	A--1040 Wien\\
	AUSTRIA\\
	email: harald.woracek@tuwien.ac.at\\[5mm]
\end{flushleft}
}

\end{document}